\theoremstyle{plain}
\begin{document}

\title[Affine Linear Sieve]
{
Almost
Prime Pythagorean Triples in Thin Orbits
}
\author{Alex Kontorovich}
\thanks{
Kontorovich is partially supported by  NSF grants DMS-0802998 and DMS-0635607, and the Ellentuck Fund at IAS}
\email{alexk@math.brown.edu}
\address{Mathematics department, Brown University, Providence, RI and
 Institute for Advanced Study, Princeton, NJ}
\author{Hee Oh}
\thanks{
Oh is partially supported by NSF grant DMS-0629322.
}
\email{heeoh@math.brown.edu}
\address{Mathematics department, Brown University, Providence, RI and Korea
 Institute for Advanced Study, Seoul, Korea}

\begin{abstract}
For the ternary quadratic form $Q(\bx)=x^2+y^2-z^2$ and a non-zero Pythagorean triple
${\bx}_0\in \z^3$ lying on the cone $Q({\bx})=0$,
we consider an orbit $\Or={\bx}_0 \G$ of a 
finitely generated subgroup
 $\G<\SO_Q(\z)$ with critical exponent 
exceeding 
$1/2$.

We find infinitely many Pythagorean triples
 in $\Or$ whose hypotenuse, area, and product of side lengths have few prime factors, where ``few'' is explicitly quantified. We also compute the asymptotic of the number of such Pythagorean triples of
  norm at most $T$, up to bounded constants.
\end{abstract} \maketitle
\tableofcontents

\section{Introduction}\label{intro}

\subsection{The Affine Linear Sieve}

In \cite{BourgainGamburdSarnak2006},
 Bourgain, Gamburd, and Sarnak introduced
 the 
 Affine Linear Sieve,
 which
 extends some classical sieve methods to thin orbits of non-abelian group actions.
 Its input is a pair $(\Or,F)$, where

 \begin{enumerate}
 \item
$\Or$ is a discrete orbit,
$\Or=\bx_{0}\cdot\G$,
generated by a discrete subgroup $\G$ of a 
linear group $G$.
It  is called ``thin'' if  the volume of $\G\bk G$ is infinite; and
\item $F$ is a polynomial, taking integer values on $\Or$.
 \end{enumerate}

\

Given the pair $(\Or,F)$, the Affine Linear Sieve attempts to output a number $R=R(\Or,F)$
as small as possible
so that 
there are infinitely many integers $n\in F(\Or)$, with $n$ having at most $R$ prime factors.
\\

A special case of their main result is the following.

\begin{thm}[\cite{BourgainGamburdSarnak2006,BourgainGamburdSarnak2008}]
Let $G< \GL_{n}(\br)$ be
a 
$\Q$-form of
$\SL_{2}$, and let $\G$ be a non-elementary%
\footnote{Recall that a
discrete subgroup $\G<\SL(2,\R)$ is elementary if and only if it has a cyclic subgroup
of finite index.}
 subgroup of $G\cap \GL_n(\Z)$. Let $\Or$ be an orbit ${\bx}_{0} \G$
 for some ${\bx}_{0}\in\Z 
^{n}\setminus\{0\}$ and
$F$ any polynomial which is integral on $\Or$.
Then
there exists a number
$$
R=R(\Or,F)<\infty
$$
such that there are infinitely many $\bx\in\Or$ with  $F(\bx)$ having at most $R$ prime factors.
Moreover the set of such $\bx$ is Zariski dense in the Zariski closure of $\Or$.
\end{thm}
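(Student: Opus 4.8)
The plan is to run a combinatorial ($\beta$-)sieve on the orbit, which requires three standard inputs: a main‑term count at each scale, congruence counts modulo square‑free $q$ that are asymptotically a multiplicative density times the main term, and a level of distribution controlling the errors. First I would make the usual reductions. Since $\G$ is non‑elementary it contains a finitely generated (indeed free) non‑elementary subgroup, and producing the desired points in a sub‑orbit suffices for ``infinitely many'' (Zariski density is restored at the end); identifying the $\Q$‑form with $\SL_2$ up to isogeny and composing the orbit map $\gamma\mapsto\bx_0\gamma$ with $F$ turns the problem into one about a polynomial $\tilde F$ on $\G<\SL_2(\Z)$, and one may replace $F$ by the product of its distinct irreducible factors. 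By strong approximation for $\SL_2$ (Matthews--Vaserstein--Weisfeiler, Nori), the reduction of $\G$ modulo $q$ is onto $\SL_2(\Z/q)$ for all $q$ coprime to a fixed modulus $q_0$.

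Next I would fix a notion of size. When the critical exponent $\delta$ of $\G$ exceeds $1/2$ (in particular when $\G$ is a lattice), Lax--Phillips / Patterson--Sullivan theory supplies an archimedean count $N(T)=\#\{\gamma\in\G:\|\bx_0\gamma\|\le T\}\sim c\,T^{\delta}$ with a power‑saving error; in general one works instead with the combinatorial balls $\G_T$ (the $\gamma\in\G$ of word length $\le T$, of size $\asymp\lambda^{T}$), or better with the laws of random walks on $\G$, which present no archimedean local obstruction to the main term. For each square‑free $q$ with $(q,q_0)=1$ one decomposes the congruence count $\#\{\gamma\in\G_T:q\mid\tilde F(\gamma)\}$ according to the residue of $\gamma$ in $\SL_2(\Z/q)$, obtaining $\beta(q)N(T)+E(T,q)$ with $\beta(q)=\#\{g\in\SL_2(\Z/q):\tilde F(g)\equiv 0\bmod q\}/|\SL_2(\Z/q)|$. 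Multiplicativity of $\beta$ is the Chinese Remainder Theorem; the bounds $\beta(p)\le 1-c$ for good $p$ (because $\tilde F$ does not vanish on the Zariski closure of $\G$) and $\beta(p)=O(1/p)$ on average (Lang--Weil for the hypersurface $\tilde F=0$ cut out of that closure) show the sieve has finite dimension.

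The crux, and the step where essentially all the difficulty lies, is the level of distribution: one needs $\sum_{q<T^{\delta_0}}|E(T,q)|=O(N(T)^{1-\epsilon})$ for some $\delta_0,\epsilon>0$. The error $E(T,q)$ measures how fast $\G_T$ (or the random walk) equidistributes in $\SL_2(\Z/q)$, i.e.\ it is governed by the spectral gap of the Cayley graph of $\SL_2(\Z/q)$ with respect to the generators of $\G$. For $q$ prime this uniform gap is the Bourgain--Gamburd expander theorem, itself built on the sum-product phenomenon in $\mathbb{F}_p$, Helfgott‑type growth of generating sets, and a Balog--Szemer\'edi--Gowers argument, together with a non‑concentration (``escape from subgroups'') input; for general square‑free $q$ one needs the Bourgain--Gamburd--Sarnak extension propagating the gap across the prime factorization. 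Granting a gap $\lambda_1\ge c>0$ uniform over good square‑free $q$, one gets $E(T,q)\ll q^{C}\theta^{T}N(T)$ (or $q^{C}T^{\delta-\eta}$ in the archimedean case), which sums to $O(N(T)^{1-\epsilon})$ once $\delta_0$ is taken small. In the thin case there is the additional subtlety that, lacking an archimedean asymptotic, one must verify directly that random walks on $\G$ do not concentrate on $\{F=0\}$ nor on any proper subvariety of $\overline{\Or}$.

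Finally, feeding the finite sieve dimension and the level of distribution $\delta_0$ into the $\beta$-sieve (Rosser--Iwaniec, with a weighted version à la Richert) produces $\gg N(T)/\log N(T)$ points $\bx\in\Or$ with $\|\bx\|\le T$ all of whose prime factors exceed $T^{\delta_0/s}$; since $|F(\bx)|\ll T^{O(1)}$, each such $F(\bx)$ has at most $R=R(\Or,F)$ prime factors, with $R$ explicit and independent of $T$, so letting $T\to\infty$ gives infinitely many. For Zariski density, one observes that orbit points do not concentrate on any proper subvariety $W\subsetneq\overline{\Or}$ — from the same counting/equidistribution, $\#\{\bx\in\Or\cap W:\|\bx\|\le T\}=o(N(T))$ — so the identical sieve run on $\Or\setminus W$ still yields $\gg N(T)/\log N(T)$ almost‑prime points off $W$; hence the set of such $\bx$ is Zariski dense in $\overline{\Or}$.
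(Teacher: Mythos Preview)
This theorem is not proved in the paper at all: it is quoted in the introduction as a result of Bourgain--Gamburd--Sarnak (with citations to their 2006 announcement and 2010 paper), serving as background for the paper's own more quantitative results. So there is no ``paper's own proof'' to compare your proposal against.

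That said, your outline is a faithful high-level sketch of the Bourgain--Gamburd--Sarnak argument: reduction to a finitely generated free subgroup of $\SL_2(\Z)$, strong approximation to identify the image mod $q$, counting in combinatorial (word-length) balls, multiplicative local densities with Lang--Weil controlling the sieve dimension, and --- the heart of the matter --- a uniform spectral gap for the Cayley graphs of $\SL_2(\Z/q\Z)$ coming from the Bourgain--Gamburd sum-product/expander machinery, extended to square-free moduli. Feeding this level of distribution into a combinatorial sieve yields the finite $R$, and the Zariski-density step is exactly as you describe.

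It is worth noting, for contrast, that the present paper does \emph{not} go this route for its own theorems. Kontorovich--Oh restrict to $\gd_\G>1/2$ and work archimedeanly: they count via the Patterson--Sullivan/Lax--Phillips spectral theory of $L^2(\G\backslash\bH)$, proving effective equidistribution of expanding closed horocycles (Theorem~\ref{equi}) to get a power-saving count on orbits (Theorem~\ref{thm:count1}), and then invoke the spectral-gap Theorem~\ref{BGSspec} only to make this uniform over congruence covers. This buys them explicit values of $R$ in specific cases, at the cost of the hypothesis $\gd>1/2$; the BGS combinatorial approach you sketched handles all non-elementary $\G$ (including $\gd\le 1/2$) but with no control on $R$.
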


\begin{rmk}
{\rm
As described in \cite[\S2]{BourgainGamburdSarnak2008},
Lagarias gave
 evidence that the result above may be false if one drops
  the condition
  that
    $\G$ is non-elementary.
 }
 \end{rmk}

For various special cases of $(\Or,F)$, one can  say more than just $R<\infty$; one can
give explicit, ``reasonable'' values of $R(\Or,F)$. This 
was achieved with some restrictions in \cite{MyThesis,Kontorovich2009}, and it is our 
present goal to improve the results there in a more general setting. 

In order to remove local obstructions which would increase $R$ for trivial reasons,
we will impose the strong primitivity condition on $(\Or, F)$.
\begin{Def}\label{defSP}
{\rm For a subset
 $\Or\subset \Z ^n $ and a polynomial $F(x_1, \cdots, x_n)$
  taking integral values on $\Or$,
the pair $(\Or,F)$ is called {\it strongly primitive} if for every integer $q\ge2$ there is an ${\bx}\in\Or$ such that
$$
F({\bx})\neq 0 \quad (\mod q).
$$
}
\end{Def}

\begin{figure}
\hskip-2.5in\includegraphics[width=2.5in]{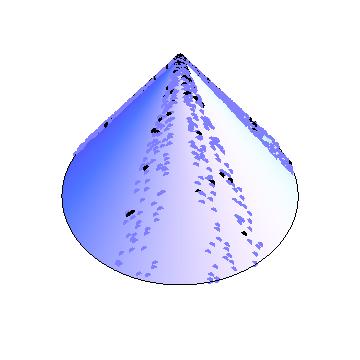}
\vskip-2.25in
\beann
\hskip2in&
\includegraphics[width=.1in]{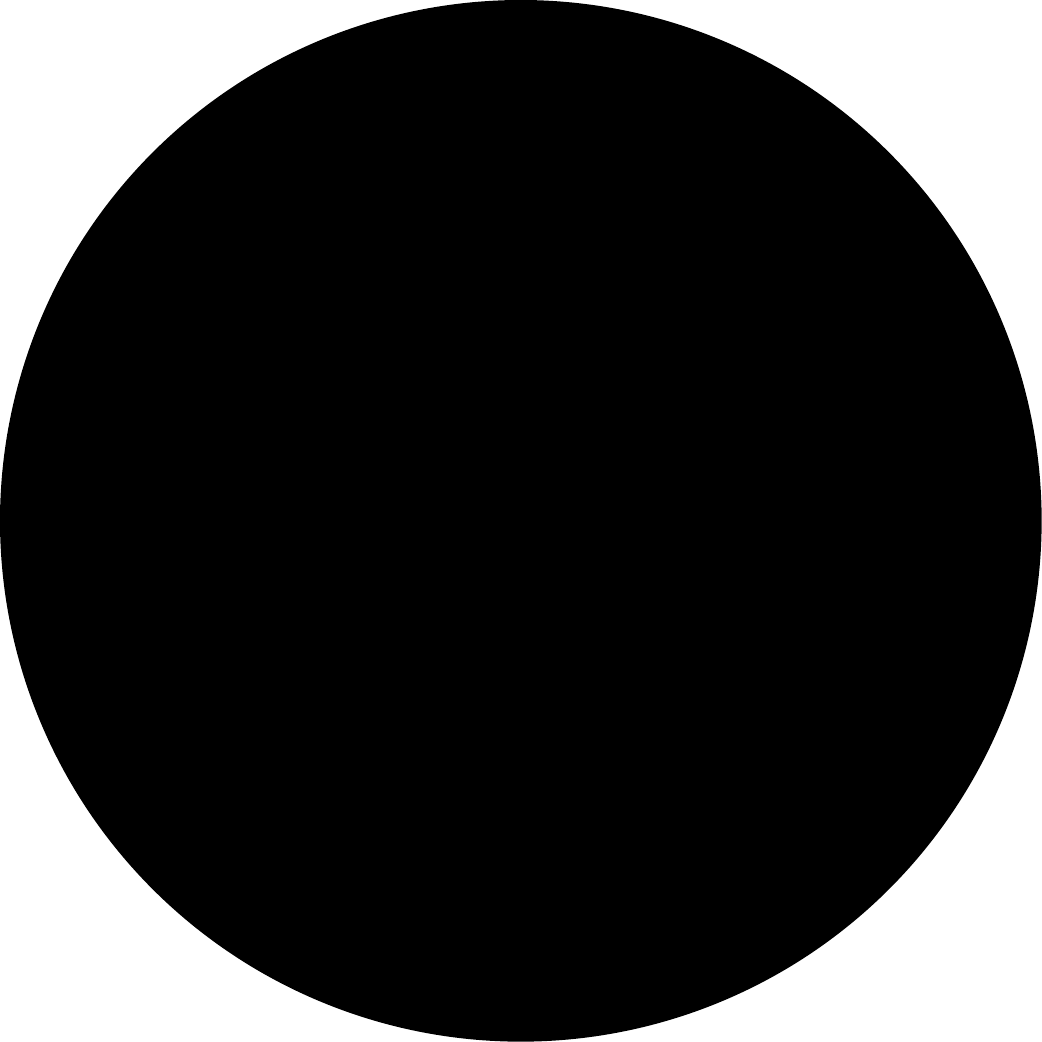}&
\text{Hypotenuse is prime} \\&
\includegraphics[width=.1in]{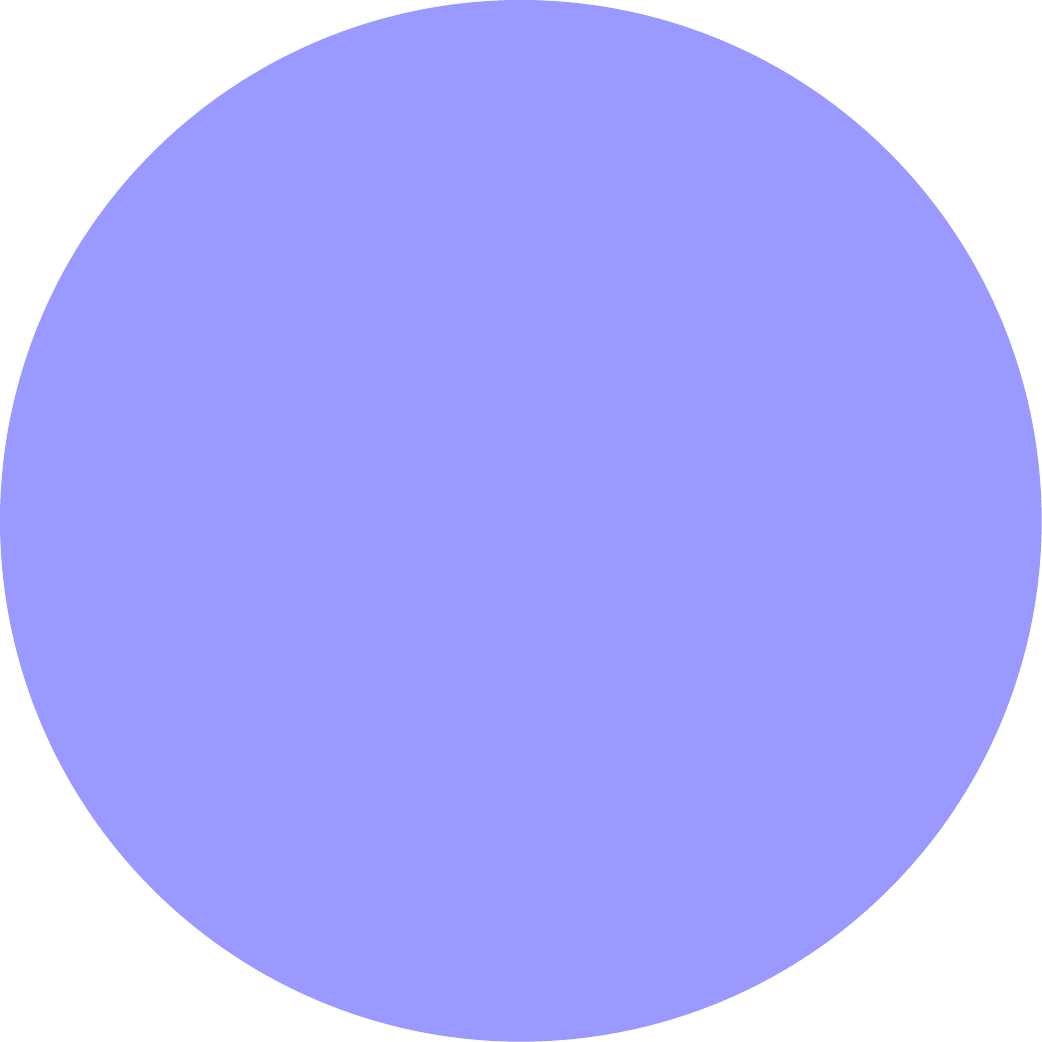}&
\text{Hypotenuse is composite}
\eeann \vskip.95in \hskip2in
\caption{A thin  orbit $\Or$ of Pythagorean triples, sifted by hypotenuse, $F(x,y,z)=z$.
 The darker points denote those triples whose hypotenuse is prime.
}
\label{fig0}
\end{figure}

\begin{rmk}
The weaker condition of {\it primitivity} requires the above for $q$ prime. See \cite[\S2]{BourgainGamburdSarnak2008} for an example of $(\Or, F )$ which is primitive but not 
strongly primitive. 
\end{rmk}

To present a concrete number $R(\Or, F)$, we will consider the quadratic form
$$
Q({\bx})=x^{2}+y^{2}-z^{2}.
$$
Hence a non-zero vector ${\bx}\in \z^3$ is a Pythagorean triple if $Q(\bx)=0$.
Let
 $G=\SO
 _{Q}(\R)
 $ be the  special orthogonal group preserving $Q$ with real entries.
For a discrete subgroup $\G$ of $G$, the critical exponent $0\le \delta_\G\le 1$
of $\G$ is defined to be the abscissa of convergence of the
Poincare series:
$$L_\G(s):=\sum_{\gamma\in \G} \|\gamma\|^{-s}$$
for any norm $\|\cdot \|$ on the vector space $\op{M}_3(\br)$ of $3\times 3$ matrices.
We remark that $\G$ is non-elementary if and only if $\delta_\G>0$. Moreover if $\G$ is finitely-generated, then
 $\G$ is of finite co-volume in $G$ if and only if $\delta_\G=1$ \cite{Patterson1976}.

The detailed statement of our main result is given in
Theorem \ref{thmMain}. The following is a special case:
\begin{thm}\label{thm:intro} Let $\G <\SO_Q(\z)$ be a finitely generated subgroup and
set  $$\Or:=(3,4,5)\G. $$
Let the polynomial $F$ be one of
 \begin{equation*}
 \begin{cases}
 \text{the hypotenuse: } & F_{\cH}({\bx}):=z;\\
\text{the ``area'': } & F_{\cA}({\bx}):=\frac 1{12} xy;
\\
\text{the product of coordinates : }& F_{\cC}({\bx}):=\frac 1{60} xyz .\end{cases}
 \end{equation*}
We assume that the pair $(\Or,F)$ is strongly primitive and that
\begin{equation*}
\gd
>
\begin{cases}
0.9992 &\text{
if $F=F_{\cH}$;}\\
0.99995 &\text{
if $F=F_{\cA}$;}
\\
0.99677
&\text{
if $F=F_{\cC}$.
}
\end{cases}\end{equation*}

Then the following hold:
\begin{enumerate}
\item
For infinitely many ${\bx}\in\Or$, the integer $F({\bx})$ has at most $R=R(\Or, F)$ prime factors, where
\begin{equation*}
R
=
\begin{cases}
14 &
\text{if $F=F_{\cH}$};\\
25 & \text{
if
$F=F_{\cA}$;}\\
29 &\text{
if
$F=F_{\cC}$.}\end{cases}\end{equation*}

\item
We have
$$
\#\{ {\bx}\in \Or: \|x\|<T,
\text{ $F({\bx})$ has at most $R(\Or, F)$ prime factors}\}
\asymp {T^{\delta_\G}\over (\log T)^{\gk}}
,
\footnote{
Recall that  $f\asymp g$ means that $ c^{-1}\cdot f(T)\le  g(T)\le c \cdot f(T)$ for some $c>1$ and for all $T>1$.
}
$$
where $\|\cdot \|$ is any norm on $\br^3$ and
the sieve dimension $\gk$ is
\begin{equation*}
\gk
=
\begin{cases}
1
&\text{
if
$F=F_{\cH}$;}
\\
4
&\text{
if
$F=F_{\cA}$;}\\ 
5&
\text{
if
$F=F_{\cC}$.}
\end{cases}\end{equation*}
%
%
In particular, the set of ${\bx}\in \Or$ such that $F({\bx})$ has at most
 $R(\Or, F)$ prime factors is Zariski dense in the cone $Q=0$.
\end{enumerate}
\end{thm}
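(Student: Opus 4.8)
The plan is to treat this as a standard application of the Brun combinatorial sieve, with the novelty concentrated in producing the arithmetic input from the infinite-volume homogeneous dynamics on $\G\bk G$. First I would set up the sifting problem. Fix one of the three polynomials $F$ and consider the sequence
\[
\cA = \cA_T = \big( a_n \big)_{n}, \qquad a_n = \#\{\bx\in\Or : \|\bx\|<T,\ F(\bx)=n\},
\]
weighted so that $|\cA_T| = \sum_n a_n = \#\{\bx\in\Or : \|\bx\|<T\}$. The critical counting result I would invoke (established earlier in the paper, or cited from work of Lax--Phillips, Patterson--Sullivan theory, and the spectral-gap refinements needed here) is that $|\cA_T|\sim c\, T^{\delta_\G}$, and more importantly that for each squarefree $q$ the congruence sum $\sum_{n\equiv 0(q)} a_n$ is well approximated by $\beta(q)\,|\cA_T|$ for a multiplicative density function $\beta$, with an error term that is power-saving in $T$ uniformly for $q$ up to a small power of $T$ (level of distribution). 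This requires the orbit to equidistribute along congruence subgroups $\G(q)$, i.e. a uniform spectral gap for the family $\{\G(q)\}$ acting on $L^2(\G(q)\bk G)$ — this is exactly why $\delta_\G>1/2$ is imposed, since Bourgain--Gamburd--Sarnak-type expansion and the transfer to a gap in the base eigenvalue need $\delta_\G$ past $1/2$ (respectively past the dimension-dependent thresholds appearing in the hypotheses).

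Next I would compute the local densities and hence the sieve dimension $\gk$. For $F=F_\cH(\bx)=z$ the polynomial is linear in effect (one controls $z$ modulo $p$ via one congruence condition coming from the $p$-adic closure of $\Or$), giving $\beta(p)=1/p + O(1/p^2)$ and sieve dimension $\gk=1$; for $F_\cA = xy/12$ one is sifting a product of two "independent" coordinates, contributing $\gk=2$ from the relation $Q=0$ combined with the two factors — wait, more precisely the strong approximation/Goursat analysis of the image of $\Or$ in $(\Z/p)^3$ together with the reducibility of $F_\cA$ and $F_\cC$ into $2$ and $3$ irreducible factors respectively, modulo the single equation $Q=0$, yields $\gk=4$ and $\gk=5$. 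The strong primitivity hypothesis is precisely what guarantees $\beta(p)<1$ for all $p$, removing local obstructions. I would verify $\prod_p(1-\beta(p))(1-1/p)^{-\gk}$ converges so the singular series is positive.

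Then I would run Brun's pure sieve (or the Diamond--Halberstam--Richert weighted sieve / the $\beta$-sieve) with this data. Feeding level of distribution $\alpha$ (the admissible exponent of $T$ up to which the congruence error is controlled) and dimension $\gk$ into the sieve gives: infinitely many $\bx\in\Or$ with $\|\bx\|<T$ for which $F(\bx)$ has at most $R$ prime factors, where $R$ is essentially $\gk\cdot(\text{explicit function of }1/\alpha)$ — optimizing the numerics produces $R=14,25,29$. For the lower bound in part (2), the weighted sieve directly yields $\gg |\cA_T|/(\log T)^\gk = \gg T^{\delta_\G}/(\log T)^{\gk}$ elements with $\le R$ prime factors; for the matching upper bound one applies the fundamental lemma of sieve theory to bound from above the count of $n=F(\bx)$ with few prime factors by $\ll |\cA_T|/(\log T)^{\gk}$, using the same level of distribution. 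The Zariski density follows because the sieve output cannot be confined to a proper subvariety: intersecting $\Or$ with any proper subvariety of the cone $Q=0$ gives a set of size $O(T^{\delta'})$ with $\delta'<\delta_\G$ (by a separate, easier counting bound on $\G$-orbits along subvarieties), which is negligible against $T^{\delta_\G}/(\log T)^\gk$.

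The main obstacle is the arithmetic input, not the sieve machinery: establishing the level of distribution, i.e. a \emph{power-saving, $q$-uniform} asymptotic for $\#\{\bx\in\Or : \|\bx\|<T,\ \bx\equiv \bx_0\gamma \ (q)\}$. This rests on (i) an effective counting theorem for $\Or$ with error term — which for infinite-volume $\G$ is delicate and uses the spectral theory of the Laplacian on $\G\bk\mathbb{H}^2$ (or $\mathbb{H}^3$, according to the form), Patterson--Sullivan measures, and a base eigenvalue $s_0=\delta_\G$ that is isolated in the spectrum — and (ii) upgrading this to congruence subgroups with a \emph{uniform} spectral gap, which is where expander graph / affine sieve technology of Bourgain--Gamburd--Sarnak and the hypothesis $\delta_\G>1/2$ are indispensable (the gap in $L^2(\G(q)\bk G)$ degrades as $q$ grows without the expansion statement, killing the level of distribution). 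Once that uniform effective equidistribution is in hand with a quantitative error $O(T^{\delta_\G-\epsilon_0})$ valid for $q\le T^{\epsilon_1}$, the rest is bookkeeping: matching the thresholds on $\delta_\G$ to the required size of $\epsilon_1$, tabulating $\beta(p)$ to pin down $\gk$, and numerically optimizing Brun's parameters to extract the stated $R$.
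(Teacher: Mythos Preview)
Your outline matches the paper's architecture: set up the sifting sequence, establish a $q$-uniform effective counting theorem on $\Or$ via spectral theory, compute the local densities $g^F(p)$ to read off the sieve dimension, and feed everything into the Diamond--Halberstam--Richert weighted sieve. Two points, however, are genuinely off and would prevent you from recovering the stated numbers.

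First, the explicit values $R=14,25,29$ and the precise thresholds on $\delta$ do \emph{not} come from Bourgain--Gamburd--Sarnak expansion. That result only guarantees the existence of \emph{some} spectral gap $\theta\in(1/2,\delta)$, which suffices for $R<\infty$ but is useless for numerics. The paper instead invokes Gamburd's quantitative gap $\theta=5/6$ (valid whenever $\delta>5/6$), and this specific value is what drives the computation. Moreover, the counting theorem has \emph{two} competing error terms: one of size $T^{\theta}$ from the spectral gap, and a second of size $T^{1/2+\frac{3}{5}(\delta-1/2)}$ coming from the effective equidistribution of expanding closed horocycles in infinite volume (this horocycle theorem, together with decay bounds for $L^2$ eigenfunctions along the horocycle, is the main new analytic ingredient of the paper). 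The level of distribution is therefore $\tau<\min\big(\frac{\delta-\theta}{2\delta},\frac{\delta-1/2}{5\delta}\big)$, and the sieve parameter is $\mu>\max\big(\frac{2}{\delta-\theta},\frac{5}{\delta-1/2}\big)$. With $\theta=5/6$ these two constraints coincide, and plugging into the DHR inequality and optimizing the auxiliary parameter $\zeta$ is what produces the table of $R$'s and the thresholds $0.9992$, $0.99995$, $0.99677$. Your framework, as written, has no mechanism producing the second error term or explaining why $\delta$ must sit so close to $1$.

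Second, your sieve-dimension discussion is muddled (you drift from $\kappa=2$ to $\kappa=4$ for $F_{\cA}$ without a real argument). The paper computes $g^F(p)=|\Or^F(p)|/|\Or(p)|$ by counting $\F_p$-points on the cone: for $F_{\cH}$ the locus $z=0$ on the cone is two lines when $p\equiv1(4)$ and empty otherwise, giving $\kappa=1$; for $F_{\cA}$ the locus $xy=0$ is four lines, giving $g(p)\sim 4/p$ and $\kappa=4$; for $F_{\cC}$ it is the union, giving $\kappa=5$. This is not about ``independent coordinates'' but a direct count on the variety.
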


\begin{rmk}
{\rm 
The functions $F_{\cA}$ and $F_{\cC}$
satisfy $F(3,4,5)=1$; hence
the pair $(\Or,F)$ is
strongly primitive regardless of the choice of the group $\G$.
For the hypotenuse, $F_{\cH}$,
one must check, given $\G$, that the pair $(\Or,F)$ is strongly primitive.
}
\end{rmk}

\begin{rmk}{\rm
The above theorem was proved in \cite{Kontorovich2009} assuming that
$\G$ contains a non-trivial (parabolic) stabilizer of $(3,4,5)$. In this case, the orbit $\Or$ contains an injection of affine space, and hence standard
sieve methods \cite{Iwaniec1978}
 also produce integral points with few prime factors.
Some of the most interesting cases which cannot be dealt with using  standard methods and
are now covered by our results are the so-called Schottky groups; these are
groups generated by finitely many hyperbolic elements.
}
\end{rmk}

\subsection{A Counting theorem} In order to sieve almost primes in a given orbit,
 one must know how to count points on such orbits, which we obtain without assuming the arithmetic condition
 on $\G$.


\begin{thm}\label{thm:count1}
Let $Q$ be
any ternary indefinite  quadratic form, 
$G=\SO_{Q}(\R)$, and
$\G<G$ a finitely generated discrete subgroup with $\gd_{\G} >1/2$.
Let ${\bx}_{0}\in\R^{3}$
be a non-zero vector lying on the cone $Q=0$
such that the orbit $\Or={\bx}_{0}\cdot\G$ is discrete.

Then
there exist a constant $c_0>0$ 
and some $\gz>0$
 such that
as $T\to\infty$, 
$$
\# \{{\bx}\in \Or: \| {\bx} \|<T\} =  c_0 \cdot T^{\gd}
+O(T^{\gd-\gz})
. $$
 The norm $\|\cdot\|$ above is 
Euclidean
.
\end{thm}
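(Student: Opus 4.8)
The plan is to convert the orbital count into a ``horospherical sector'' lattice-point count for $\op{PSL}_2(\br)$, to evaluate it via the equidistribution of expanding horocyclic translates in the infinite-volume quotient, and to use the spectral gap furnished by the hypothesis $\gd>1/2$ to produce the power-saving error term. First I would pass from $\SO_Q$ to $\op{PSL}_2$: since $Q$ is ternary and indefinite, $G=\SO_Q(\br)^\circ\cong\op{PSL}_2(\br)$, and under the classical identification of $\br^3$ with the space of symmetric $2\times 2$ matrices --- in which $Q$ becomes a non-zero multiple of the determinant --- the $G$-action on the cone $Q=0$ is intertwined with the action $v\mapsto g^t v$ of $\op{SL}_2(\br)$ on $\br^2\setminus\{0\}$ (modulo $\pm1$). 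Under this, $\bx_0$ corresponds to some $v_0\in\br^2$ with $\bx_0 g\leftrightarrow (g^t v_0)(g^t v_0)^t$; since a rank-one symmetric matrix $ww^t$ has Frobenius norm $\|w\|^2$, we get $\|\bx_0 g\|\asymp\|g^t v_0\|^2$ with bounded ratio. In particular $N:=\op{Stab}_G(\bx_0)$ is (the image of) a one-parameter unipotent subgroup, fixing a boundary point $\xi_0\in\partial\mathbb{H}^2$, and writing $H:=\op{Stab}_\G(\bx_0)=\G\cap N$ --- nontrivial exactly when $\xi_0$ is a parabolic fixed point of $\G$ --- the theorem amounts to estimating $\#\{\gamma\in H\bk\G:\ \gamma\in B_T\}$ with $B_T:=\{g\in G:\ \|\bx_0 g\|<T\}$.

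Next I would turn this into an equidistribution statement. In the Iwasawa decomposition $G=NAK$ with $N=\op{Stab}_G(\bx_0)$ and $A=\{a_s\}$ the geodesic flow, one has $\|\bx_0\,n a_s k\|\asymp e^{s}$ (with an exact formula for a suitable norm), so the left-$N$-invariant set $B_T$ equals, up to a boundary layer of controlled thickness, $N\cdot\{a_s:\ s<\log T+O(1)\}\cdot K$, and thus $\#(\Or\cap\{\|\cdot\|<T\})=\#\{\gamma\in H\bk\G:\ s(\gamma)<\log T+O(1)\}$. By the standard thickening argument (Duke--Rudnick--Sarnak, Eskin--McMullen; in infinite volume, Roblin and Oh--Shah) this count is, up to errors governed by the $\epsilon$-regularity of $B_T$, the total mass over $s\in[0,\log T]$ of the $a_s$-translates of a fixed horocyclic piece carrying its Patterson--Sullivan density, tested against a fixed bump on $\G\bk G$. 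Since $\G$ is finitely generated it is geometrically finite, so the Bowen--Margulis--Sullivan measure $m^{\mathrm{BMS}}$ is finite, and the equidistribution of such expanding translates gives $e^{-\gd s}a_{s*}(\text{piece})\to c\cdot m^{\mathrm{BR}}$ with $c>0$; as $\int_0^{\log T}e^{\gd s}\,ds\asymp T^{\gd}$, this produces the main term $c_0T^{\gd}$ with $c_0>0$ (positivity from the full support of $m^{\mathrm{BR}}$ and of the Patterson--Sullivan density).

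Finally, the error term $O(T^{\gd-\gz})$ is precisely where $\gd>1/2$ enters. By Lax--Phillips spectral theory, for $\gd>1/2$ the spectrum of the Laplacian on $L^2(\G\bk\mathbb{H}^2)$ below $1/4$ is a finite set $\{\lambda_0=\gd(1-\gd)<\lambda_1\le\cdots<1/4\}$ with $\lambda_0$ simple and isolated, while the rest of the spectrum lies in $[1/4,\infty)$; writing $\lambda_j=s_j(1-s_j)$ this means $\tfrac12<s_j<\gd$ for $j\ge1$. Feeding the corresponding spectral expansion into Step 2 (equivalently, the spectral expansion of the incomplete Eisenstein series $\sum_{\gamma\in H\bk\G}h(s(\gamma g))$), the density of the tested $a_s$-translated piece has the shape $c\,e^{\gd s}+\sum_{j\ge1}c_j e^{s_j s}+O(e^{s/2})$, so $\int_0^{\log T}(\cdots)\,ds=c_0T^{\gd}+\sum_j c_j'T^{s_j}+O(T^{1/2})$; since $\gd>1/2$, every term after the first is $O(T^{\gd-\gz_0})$ for $\gz_0=\gd-\max\{s_1,\tfrac12\}>0$. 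The passage from $\mathbf 1_{B_T}$ to its $\epsilon$-smoothing costs $O(\epsilon^{a}T^{\gd})$ by the regularity of the limit measure, while the smoothing degrades the effective rate to $O(\epsilon^{-b}T^{1/2})$; optimizing $\epsilon$ as a power of $T$ balances these and yields the total error $O(T^{\gd-\gz})$ for some $\gz\in(0,\gz_0]$. This is the effective mechanism of \cite{Kontorovich2009}, carried out for an arbitrary geometrically finite orbit rather than for a closed horocycle.

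The main obstacle is the effective control of the boundary of $B_T$ in the presence of the fractal measures $m^{\mathrm{BMS}}$, $m^{\mathrm{BR}}$, and the Patterson--Sullivan density on $\partial\mathbb{H}^2$: since $\G$ is thin these are singular, and making the smoothing step quantitative requires non-concentration estimates (H\"older regularity, absence of atoms) to bound the number of orbit points within distance $\epsilon$ of $\partial B_T$ by $O(\epsilon^{a}T^{\gd})$ --- this ``effective well-roundedness'' of the norm balls is the technical heart of the argument. There are two further complications. When $\xi_0$ is not a parabolic fixed point (the case of principal interest, including Schottky groups $\G$) the naive Poincar\'e series $\sum_{\gamma\in\G}\psi(\bx_0\gamma g)$ fails to be square-integrable on $\G\bk G$ --- it is a nonzero constant along the horocycle through $\bx_0$ --- so it cannot be expanded spectrally in the naive way, and one must run the unfolding-plus-equidistribution argument with its quantitative inputs. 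When $\xi_0$ \emph{is} parabolic, the horocyclic piece is a closed horocycle of a cusp, and one must additionally control its excursion high into the cusp when applying the effective equidistribution.
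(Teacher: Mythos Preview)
Your strategy matches the paper's: pass to $\G<\op{PSL}_2(\br)$ via the spin cover, unfold the count $F_T(e)=\cN(T)$ against a bump $\Psi_\eta$ into an integral of $\Psi_\eta$ over horocyclic translates $\{na_y:y>\|\bx_0\|/T\}$, expand spectrally using Lax--Phillips, and optimize $\eta$. But you have misidentified the technical heart. The well-roundedness of $B_T$ is elementary here --- the norm is Euclidean and the smoothing is by a bump on $G$, so $B_T U_\eta\subset B_{(1+\eta)T}$ is immediate and no fractal regularity of $m^{\mathrm{BMS}}$ or $m^{\mathrm{BR}}$ is invoked anywhere in the paper's argument; that language does not appear.

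The genuine difficulty, which you flag but do not resolve, is the infinite-horocycle case $N\cap\G=\{e\}$. There the spectral expansion requires the absolute convergence and asymptotics of $\phi_j^N(a_y):=\int_{\br}\phi_j(n_xa_y)\,dx$ for \emph{each} discrete eigenfunction $\phi_j$, not only the base $\phi_0$ (for which Patterson--Sullivan theory suffices). The paper's key input is the pointwise decay $\phi_j(n_xa_y)\ll\big(y/(x^2+y^2)\big)^{s_j}$ (Theorem~\ref{thm:hors}), proved by Fourier analysis in a flare of the fundamental domain; since $s_j>1/2$ this makes $\phi_j^N(a_y)$ converge and forces $\phi_j^N(a_y)=c_jy^{1-s_j}+d_jy^{s_j}$. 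Only with this in hand does the effective horocycle equidistribution (Theorem~\ref{equi}) go through, and its tempered error is $O(y^{1/2-\frac35(\gd-1/2)-\vep})$ rather than $O(y^{1/2})$ --- the degradation comes from an \emph{inner} thickening in the $AN^-$-direction needed to convert horocycle integrals into matrix coefficients, which is separate from the outer $\eta$-smoothing you describe.
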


\begin{rmk}
{\rm
Let $N_0$ denote the (unipotent) stabilizer of ${\bx}_{0}$ in $G$.
The Theorem \ref{thm:count1} was proved in \cite{Kontorovich2009} under the further
assumption
that $\G\cap N_0$ is a lattice in $N_0$.
}
\end{rmk}

\subsection{Expanding Closed Horocycles}
The
main difference between this paper and \cite{Kontorovich2009} is the method  used to establish counting theorems
such as 
Theorem \ref{thm:count1}.
While \cite{Kontorovich2009} uses abstract operator theory, in the present work
we prove the effective equidistribution of expanding
closed horocycles
on a  
hyperbolic
surface $X$, allowing not only $X$ to have infinite volume, but also allowing the closed horocycle to be infinite in length.


Let
$G=\SL_2(\R)$ and write the Iwasawa decomposition $G=NAK$ with
\be\label{eq:NAdef}
N
=
\left\{
n_{x}=\begin{pmatrix} 1 & x\\ 0 &1\end{pmatrix}
:
x\in\R
\right\}
,
\quad
A=
\left\{
a_{y}=\begin{pmatrix} \sqrt y & 0\\ 0 &1/\sqrt y\end{pmatrix}
:
y>0
\right\}
,
\ee
and $K=\SO_2(\br)$.

We use the upper half plane $\bH=\{z=x+iy: y>0\}$ as a model for the hyperbolic plane
with the metric $\frac{\sqrt{dx^2+dy^2}}{y}$.
The group $G$ acts on $\bH$ by
fractional linear transformations which give arise all orientation preserving isometries of $\bH$:
$$\begin{pmatrix} a&b\\ c&d\end{pmatrix} z=\frac{az+b}{cz+d}$$
for $\op{Im}(z)>0$.
We compute:
$$n_xa_y(i)=x+iy .$$

Let $\G<G$ be a finitely generated discrete subgroup with $\delta_\G>1/2$.
 Assume that the horocycle $(\G\cap N)\bk  N$ is
 closed in $X:=\G\bk G$, or equivalently
 the image of $N(i)=\{x+i: x\in\br\}$ is closed in $\G\bk \bH$ under the canonical projection
  $\bH\to\G\bk \bH$.
    Geometrically, this is isomorphic to  either a line $\R$ or to a circle $\R/\Z$,
    depending on whether or not $\G\cap N$ is trivial.
%
%
We
push the closed horocycle $(N\cap \G)\bk N (i)$ in the
orthogonal
direction $a_{y}$, and
are concerned with its asymptotic distribution near the boundary, corresponding to  $y\to0$.

Let $X=\G\bk\bH$ and consider the Laplace operator $\gD=-y^2(\partial_{xx}+\partial_{yy})$.
By  Patterson \cite{Patterson1976} and  Lax-Phillips \cite{LaxPhillips1982},
the spectral resolution of  $\gD$ acting on $L^2(X)$ consists of  only finitely many eigenvalues in the interval $[0,1/4)$,  with the smallest given by
$\gl_{0}=\delta_\G(1-\delta_\G)$.
Denote the point spectrum below $1/4$ by 
$$
0\le\gl_0<\gl_1\le\dots\le \gl_k<1/4
.
$$
 Let $\phi_0,\dots,\phi_k$ be the corresponding eigenfunctions, normalized by $\|\phi_j\|_2=1$. Let $s_{j}>1/2$ satisfy $\gl_{j}=s_{j}(1-s_{j})$, $j=0,1,2,\dots,k$, so that $s_{0}=\gd$.

\begin{Thm}\label{equi}
Fix notation as above and assume  that $(\G\cap N)\bk N$ is closed.
 Then
for any $\psi\in C^\infty_c(\G\bk\bH)$,
$$
\int\limits_{n_x\in( N\cap \G)\ba N}
\hskip-.2in
\psi (x+iy)\; dx
=
\sum_{j=0}^k
\,
\langle \psi, \phi_j\rangle
\hskip-.2in
\int\limits_{n_x\in( N\cap \G)\ba N}
\hskip-.2in
\phi_j (x+iy)\; dx
 +O_{\vep}(
 y^
{
\frac 12 - \frac35(\gd-\frac 12)-\vep
}
 )
 ,
$$
as $y\to0$.
Here the implied constant depends
only on  a 
Sobolev norm 
of $\psi$, and on $\vep>0$ which is arbitrary.

Moreover, the integrals above converge, and satisfy
 $$
\int\limits_{n_x\in( N\cap \G)\ba N}
\hskip-.2in
\phi_j (
x+iy
)\; dx
\sim
c_{j}\cdot y^{1-s_{j}}
,
\qquad
\text{
as $y\to 0$,
}
$$
where $c_0>0$, and $c_1, \cdots, c_k \in \br$.
\end{Thm}

\begin{figure}
\includegraphics[width=.81in]{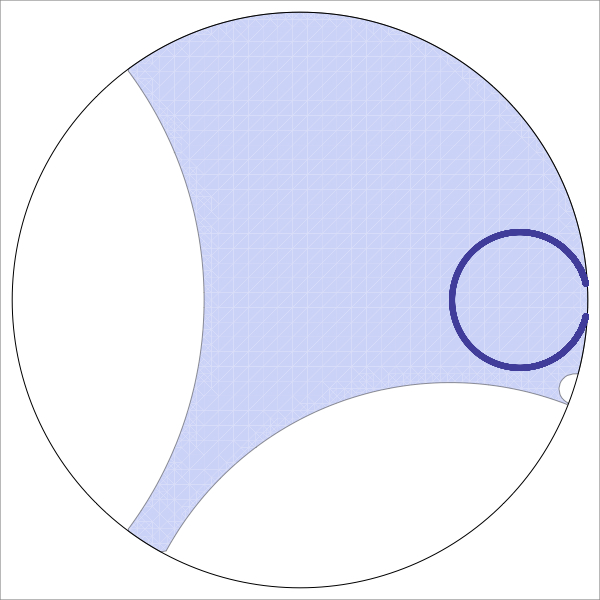}
\includegraphics[width=.81in]{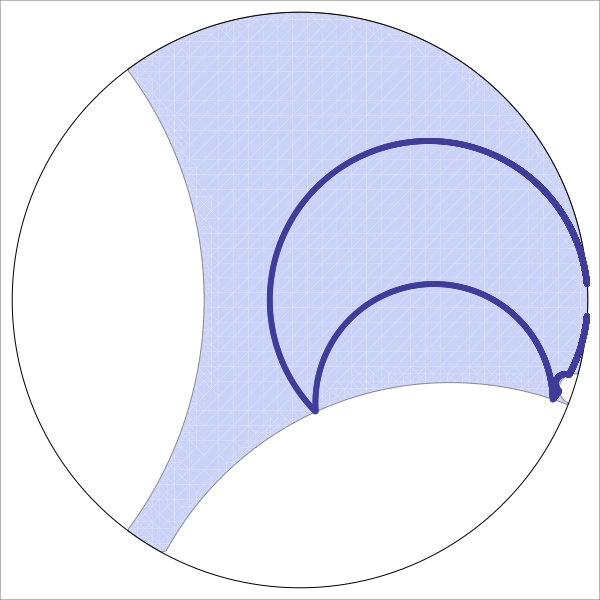}
\includegraphics[width=.81in]{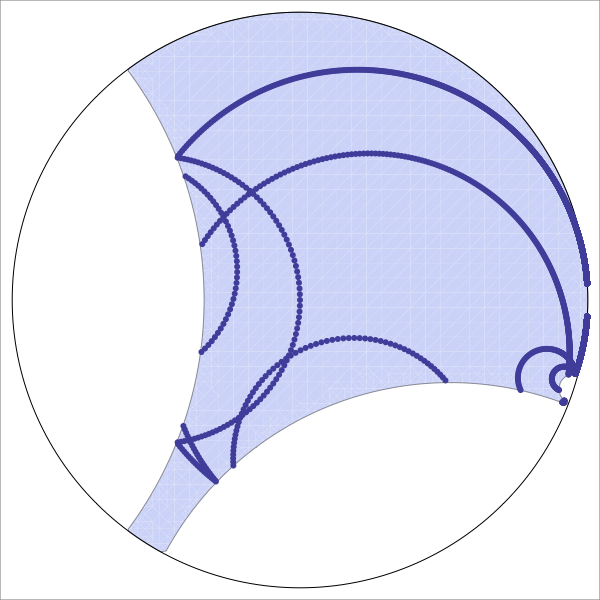}
\includegraphics[width=.81in]{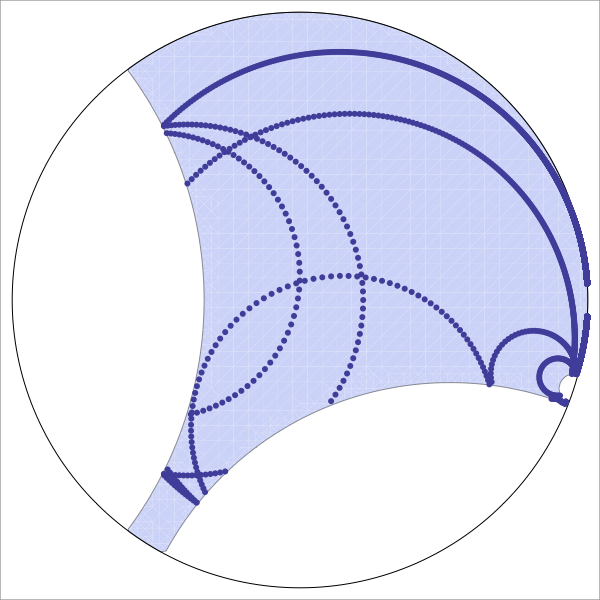}
\includegraphics[width=.81in]{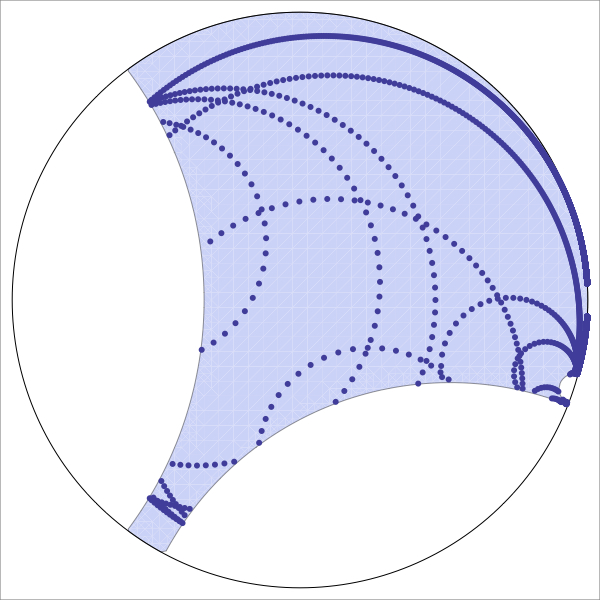}
\includegraphics[width=.81in]{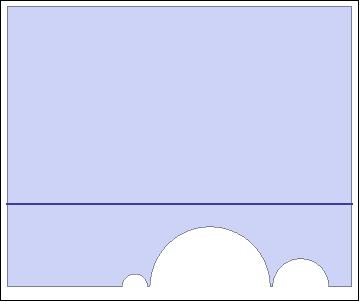}
\includegraphics[width=.81in]{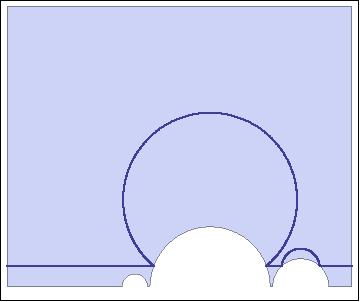}
\includegraphics[width=.81in]{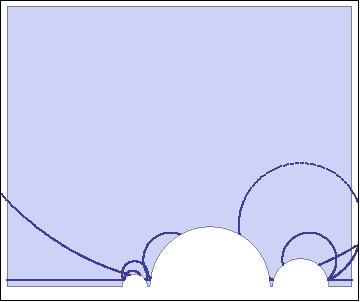}
\includegraphics[width=.81in]{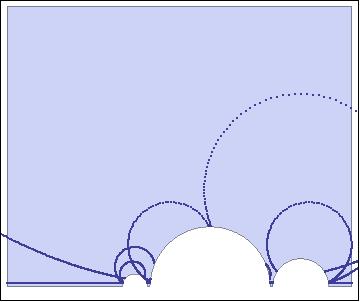}
\includegraphics[width=.81in]{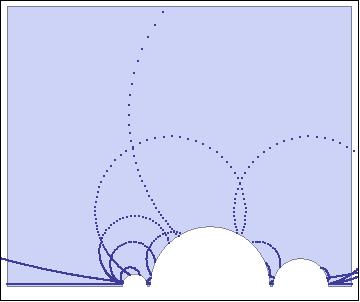}
\caption{Equidistribution of  expanding closed horocycles  on a Schottky domain in the disk $\bD$ and upper half plane $\bH$ models.}
\label{horo}\end{figure}

\begin{remark}
{\rm
If $\G$ is a lattice, then the closedness of $\Gamma\cap N\bk N$ implies
that $\Gamma\cap N\bk N$ is compact.
 In this case, Sarnak \cite{Sarnak1981} proved the above result allowing $\psi\in C^{\infty}_{c}(\G\bk G)$ (that is, not requiring $K$-fixed), and with
a best possible error term of
$$
 y^
{
\frac 12
}
$$
in place of our weaker bound
$$
 y^
{
{\frac 12} - \frac35(\gd-{\frac 12})
}
.
$$
}
\end{remark}

\subsection{Bounds for Automorphic Eigenfunctions}
The proof of
Theorem \ref{equi} requires control over the 
integrals of the eigenfunctions $\phi_{j}$, which {\it a priori} are only square-integrable.
For the base eigenfunction, one has extra structure coming from Patterson
theory \cite{Patterson1976} which makes this control possible. But for the other eigenfunctions,
this analysis fails. Nevertheless, the problem of obtaining such control was solved in the
first-named author's thesis \cite{MyThesis}.
The statement is the following (see the Appendix as well).

\begin{thm}[\cite{MyThesis}]\label{thm:hors}
Fix notation as in Theorem \ref{equi}, and assume that the closed horocycle $(N\cap\G)\bk N$ is
infinite. Let $\phi_{j}\in L^{2}(\G\bk \bH)$ be an eigenfunction of eigenvalue $\gl_{j}=s_{j}(1-s_{j})<1/4$ with $s_{j}>1/2$.  Then
$$
\phi_{j}
(n_{x}a_{y})
\ll_{\phi_{j}}
\left(
{y\over
x^{2}+y^{2}
}
\right)
^{s_{j}}
,
$$
as $|x|\to\infty$ and $y\to0$.
\end{thm}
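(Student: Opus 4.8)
\medskip

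\noindent\emph{Proof plan.} The idea is to reduce the pointwise bound to a decay estimate for $\phi_j$ deep inside the funnel of $X=\G\bk\bH$ facing the point $\infty$.

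First I would unwind the hypothesis. A \emph{closed} horocycle $(N\cap\G)\bk N$ which is \emph{infinite} forces $N\cap\G=\{e\}$ and, more to the point, $\infty\notin\Lambda(\G)$: the image of $N(i)$ in $X$ is closed exactly when $\infty$ is either parabolic for $\G$ --- yielding a circle --- or lies in the domain of discontinuity $\Omega=\partial\bH\setminus\Lambda(\G)$ --- yielding a line; infiniteness selects the latter. Since a closed infinite horocycle also rules out $\G$ being a lattice, $\G$ is geometrically finite with $\Lambda(\G)$ a compact subset of $\R$, say $\Lambda(\G)\subset[b,a]$. Then for $|x|$ large, $z=n_xa_y(i)=x+iy$ lies in, and runs out, the funnel $F_0$ of $X$ attached to the component $I_0\ni\infty$ of $\Omega$; writing $C$ for the geodesic joining the endpoints $a,b$ of $I_0$ --- whose image in $X$ is the bounding closed geodesic $\partial F_0$ --- conjugating $C$ to the imaginary axis by the fractional linear map $w\mapsto(w-b)/(a-w)$ gives $d(z,C)=\log\frac{|z-a|\,|z-b|}{(a-b)y}+O(1)=\log\frac{x^2+y^2}{y}+O(1)$ as $|x|\to\infty$ with $0<y\le1$, and moreover $C$ realizes $d\big(z,\G C\big)$ since $z$ lies strictly on the funnel side of the convex core of $\G$. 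It therefore suffices to prove $|\phi_j(z)|\ll_{\phi_j}e^{-s_j d(z,\partial F_0)}$.

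For the base eigenfunction ($j=0$, $s_0=\gd$) I would invoke Patterson--Sullivan theory: up to a positive scalar, $\phi_0(x+iy)=\int_{\Lambda(\G)}\big(\frac{y}{(x-\xi)^2+y^2}\big)^{\gd}\,d\mu(\xi)$, where $\mu$ is the Patterson--Sullivan measure, finite and supported on $\Lambda(\G)\subset[b,a]$. For $|x|$ large, $|x-\xi|\asymp|x|$ uniformly in $\xi\in\Lambda(\G)$, so the Poisson kernel is $\asymp\frac{y}{x^2+y^2}$ uniformly, and integrating against $\mu$ gives $\phi_0(x+iy)\asymp\big(\frac{y}{x^2+y^2}\big)^{\gd}$ --- the claimed upper bound, and by positivity also a matching lower bound (the source of the constant $c_0>0$ in Theorem \ref{equi}).

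For the higher eigenfunctions ($1\le j\le k$) there is no positivity, so I would separate variables in $F_0$. Conjugating so that the cyclic stabilizer $\G_{F_0}=\langle h_0\rangle$ has axis the imaginary axis and $h_0:w\mapsto e^{\ell}w$, $\phi_j$ descends to the half-cylinder $\{(t,\theta):t\in\R/\ell\Z,\ 0<\theta<\theta_0\}$ with $t=\log|w|$, $\theta=\arg w$ ($\theta\to0$ the exit of the funnel), where $\gD=-\sin^2\theta\,(\partial_{tt}+\partial_{\theta\theta})$. Writing $\phi_j=\sum_{n\in\Z}c_n(\theta)e^{2\pi int/\ell}$, each $c_n$ solves $c_n''-(2\pi n/\ell)^2c_n+\gl_j(\sin\theta)^{-2}c_n=0$, whose two solutions behave like $\theta^{s_j}$ and $\theta^{1-s_j}$ as $\theta\to0$; since for $s_j>1/2$ the branch $\theta^{1-s_j}$ is not locally square-integrable against $(\sin\theta)^{-2}\,dt\,d\theta$ while $\theta^{s_j}$ is, $\phi_j\in L^2$ forces the $\theta^{1-s_j}$-component of every $c_n$ to vanish. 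A uniform-in-$n$ analysis of this (modified-Bessel / WKB type) family of ODEs down to the singular point $\theta=0$ then gives $|\phi_j(t,\theta)|\ll_{\phi_j}\theta^{s_j}$ uniformly in $t$, the $n\neq0$ modes being negligible; undoing the conjugation, $\theta$ becomes the visual angle $\arg\frac{z-b}{a-z}$, and $\sin$ of it is $\asymp\frac{(a-b)y}{|z-a|\,|z-b|}\asymp\frac{y}{x^2+y^2}$, which with the reduction above finishes the proof.

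The step I expect to be the real obstacle is the last one: turning ``$L^2$ kills the bad solution'' into an \emph{effective} pointwise bound needs simultaneous control of all the $c_n(\theta)$, uniformly in $n$ and all the way to $\theta=0$ --- a classical but delicate special-function estimate. One can instead cite the known fact (Lax--Phillips \cite{LaxPhillips1982}, Patterson \cite{Patterson1976}; see also Guillop\'e--Zworski) that $L^2$ eigenfunctions on a geometrically finite hyperbolic surface decay like $e^{-s_j\cdot(\text{depth in the funnel})}$, after which the geometric reduction closes the argument; what remains there is the elementary check that for $|x|$ large no $\G$-translate of $\partial F_0$ lies closer to $z$ than $C$.
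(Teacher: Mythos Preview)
Your overall architecture coincides with the paper's: pass to the funnel attached to the free boundary component containing $\infty$, Fourier expand in the angular variable of the hyperbolic cylinder, and use square-integrability to kill the bad branch. Your geometric reduction $d(z,C)=\log\frac{x^2+y^2}{y}+O(1)$ is exactly the paper's final coordinate conversion, and your Patterson--Sullivan argument for $j=0$ is correct (and is essentially what the paper uses elsewhere to get $c_0>0$).

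The gap is the one you yourself flag, and your proposed escape from it does not work. The statement ``$L^2$ eigenfunctions decay like $e^{-s_j\cdot(\text{depth})}$ in the funnel'' is not in Lax--Phillips or Patterson; those papers prove that there is no $L^2$ point spectrum in $[1/4,\infty)$, which is a qualitative consequence of the same mechanism but not the pointwise bound you need. This pointwise estimate \emph{is} the content of the theorem, and the paper attributes it to the first author's thesis precisely because it was not available to cite. So ``cite the known fact'' is circular here, and the WKB sketch you give (each $c_n(\theta)$ has the good indicial exponent, ``the $n\neq0$ modes being negligible'') does not by itself yield a bound uniform in $n$ down to $\theta=0$.

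What the paper actually does for this step: it identifies your $c_n(\theta)$ explicitly as $c_n\sqrt{\sin\theta}\,P^{\mu}_{\nu}(\cos\theta)$ with $\mu=\tfrac12-s_j$, $\nu=-\tfrac12+2\pi i n/\log\kappa$, and constants $c_n$. Then (i) it extracts \emph{exponential} decay of the constants, $c_n\ll n^{s_j}e^{-\pi n\alpha/\log\kappa}$, by inserting the known large-$|\nu|$ asymptotic for $P^\mu_\nu$ on a \emph{fixed} interval $\theta\in[\alpha/2,\alpha]$ bounded away from $0$ into $\|\phi_j\|_2^2<\infty$; and (ii) to bound the sum at small $\theta$ it splits at $n\asymp 1/\theta$: for $n\lesssim1/\theta$ the hypergeometric representation gives $P^\mu_\nu(\cos\theta)\ll\theta^{s_j-1/2}$, while for $n\gtrsim1/\theta$ the large-$|\nu|$ asymptotic gives $|P^\mu_\nu(\cos\theta)|\ll n^{-s_j}\theta^{-1/2}e^{2\pi n\theta/\log\kappa}$, which is beaten by the coefficient decay since $\theta<\alpha$. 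Summing yields $\phi_j(r,\theta)\ll\theta^{s_j}$. The point is that the uniformity in $n$ comes not from analyzing the ODE near the singular point for all $n$ at once, but from exploiting $L^2$ on a compact $\theta$-range where the special-function asymptotics are clean, and then playing the resulting exponential coefficient decay against the growth of $P^\mu_\nu$ as $\theta\to0$.
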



\subsection{Organization of the Paper}

In \S\ref{secBack} we give some background and
elaborate further on Theorem \ref{thm:intro}.
For the reader's convenience,  in the Appendix we reproduce
the proof of Theorem \ref{thm:hors} from \cite{MyThesis}, since this reference  is not readily available.
Equipped with such control, the proof of Theorem \ref{equi} follows with minor changes from the one given for one dimension higher in \cite{KontorovichOh2008}. 
We
sketch the argument in \S\ref{sec:eq}, 
and
use it to
prove
Theorem \ref{thm:count2}
in \S\ref{countSec}.
In \S\ref{sieving}, we verify the sieve axioms  in Theorem \ref{sieve} and conclude Theorem \ref{thmMain}. At the end of \S\ref{sieving}, we derive the explicit values of $R$,
in particular proving Theorem \ref{thm:intro}.

\ 

\subsection*{ Acknowledgments.}
The authors wish to express their gratitude to Peter Sarnak  for many helpful discussions.


\section{Background and More on Theorem \ref{thm:intro}}\label{secBack}
In this section, we elaborate on Theorem \ref{thm:intro}.
Let $Q$ be a ternary rational quadratic form which is isotropic over $\q$.
Let $\G <\SO_Q(\z)$ be a finitely generated subgroup
with $\delta_\G >1/2$. 


As $Q$ is isotropic over $\q$, we have a $\q$-rational
covering $\SL_2\to \SO_Q$.
 Therefore
we may assume without loss of generality that $\G$ is a finitely generated subgroup
of $\SL_2(\z)$.

\subsection{Uniform Spectral Gaps}\label{subsec:SpecGap}
For the application to sieving, Theorem \ref{thm:count1} described in the introduction is insufficient. One requires uniformity
along arithmetic progressions; hence we recall the notion of a spectral gap.
\\

Let $\G(q)$ denote the ``congruence'' subgroup of $\G$ of level $q$, 
$$
\G(q):=\{\g\in\G:\g\equiv I(q)\}.
$$
The inclusion of vector spaces
$$
L^{2}(\G\bk\bH)
\subset
L^{2}(\G(q)\bk\bH)
$$
induces the same inclusion on the spectral resolution of the Laplace operator:
$$
\Spec(\G\bk\bH)
\subset
\Spec(\G(q)\bk\bH)
.
$$

\begin{Def}
The {\it new spectrum}
$$
\Spec_{new}(\G(q)\bk\bH)
$$
at level $q$ is defined to be the set of eigenvalues
below
$
1/4
$
which are in
$
\Spec(\G(q)\bk\bH)
$
but  not in
$
\Spec(\G\bk\bH)
.
$
\end{Def}

\begin{Def}\label{fBdef}
{\rm
A number $\gt$ in the interval $1/2<\gt<\gd$ is called a {\it spectral gap} for $\G$
if there exists a {\it ramification number} $\fB\ge1$ such that  for any square-free
$$
q=q'q''
\quad
\text{ with }
\quad
q'\mid\fB
\text{ and }(q'',\fB)=1,
$$
we have
$$
\Spec(\G(q)\bk\boldH)_{new}
\cap
(0,\gt(1-\gt))
\quad
\subset
\quad
\Spec(\G(q')\bk\boldH)_{new}.
$$
}
\end{Def}

That is, the eigenvalues below $\gt(1-\gt)$ which are new for $\G(q)$ are coming from the ``bad'' part $q'$ of $q$. As $\fB$ is a fixed integer depending only on $\G$, there are only finitely many possibilities for its divisors $q'$.


Collecting the results in \cite{BourgainGamburd2007,BourgainGamburdSarnak2008} and their extension from prime to square-free of \cite{Gamburd2002} we have:

\begin{thm}[\cite{Gamburd2002,BourgainGamburd2007,BourgainGamburdSarnak2008}]\label{BGSspec}
\

\begin{enumerate}
\item
For any
finitely generated $\G<\SO_Q(\z)$ with critical exponent $\gd>1/2$,
there exists some spectral gap
$$
1/2<\gt<\gd.
$$

\item If $\gd>5/6$ then  $\gt=5/6$ holds.
\end{enumerate}
\end{thm}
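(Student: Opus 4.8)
The plan is to reduce the statement to a uniform \emph{expansion} property of the finite congruence quotients $\G/\G(q)$ and then to transfer that expansion to the bottom of the Laplace spectrum on $\G(q)\bk\bH$. As already noted, the $\q$-rational isogeny $\SL_2\to\SO_Q$ lets us take $\G$ to be a finitely generated subgroup of $\SL_2(\Z)$; since $\gd_\G>1/2>0$, the group $\G$ is non-elementary, so by the Tits alternative it contains a nonabelian free subgroup and is therefore Zariski dense in $\SL_2$ (every proper algebraic subgroup of $\SL_2$ is virtually solvable). Strong approximation (Matthews--Vaserstein--Weisfeiler, or Nori in this rank-one case) then produces an integer $\fB_0\ge1$ so that reduction mod $q$ gives $\G/\G(q)\cong\SL_2(\Z/q\Z)$ whenever $(q,\fB_0)=1$, and $\G/\G(q)\cong\prod_{p\mid q}\SL_2(\mathbb{F}_{p})$ for square-free such $q$. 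Fixing once and for all a finite symmetric generating set $S$ of $\G$, the theorem is reduced to showing that the Cayley graphs of $\G/\G(q)$ with respect to the image of $S$ form a family of uniform expanders, where $\fB$ is a suitable multiple of $\fB_0$ absorbing the finitely many exceptional primes.

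This expander property at prime level is the theorem of Bourgain--Gamburd \cite{BourgainGamburd2007}, its extension to square-free $q$ being furnished by \cite{BourgainGamburdSarnak2008,Gamburd2002}; I would recall its three pillars. First, \emph{quasirandomness}: every nontrivial irreducible complex representation of $\SL_2(\mathbb{F}_{p})$ has dimension $\gg p$ (Frobenius; Landazuri--Seitz), so that a probability density on the group which is $\ell^{2}$-close to uniform is in fact close to uniform in total variation. Second, a \emph{product theorem} of Helfgott: any generating subset $A\subset\SL_2(\mathbb{F}_{p})$ with $|A|\le|\SL_2(\mathbb{F}_{p})|^{1-\vep}$ satisfies $|A\cdot A\cdot A|\gg_{\vep}|A|^{1+\vep'}$. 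Third, \emph{escape from subgroups}: using the Zariski density of $\G$ and the Tits alternative, the $\ell$-fold convolution power of the word measure on $S$, for $\ell\asymp\log p$, puts mass $\ll p^{-c}$ on every proper subgroup of $\SL_2(\mathbb{F}_{p})$. Feeding the last two into an $\ell^{2}$-flattening iteration drives the random walk to nearly uniform in $O(\log p)$ steps, after which quasirandomness upgrades this to a uniform spectral gap for the combinatorial Laplacian; the square-free case then follows by a tensor-product argument exploiting that distinct factors $\SL_2(\mathbb{F}_{p})$ have no common nontrivial quotient.

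It remains to convert this combinatorial gap into one for $\gD$ on $\G(q)\bk\bH$. Since $\G$ has infinite covolume, the bottom of $\Spec(\G(q)\bk\bH)$ is the simple eigenvalue $\gl_0=\gd(1-\gd)$, realized by the positive, $\G(q)$-invariant Patterson--Sullivan density $\phi_0$. A Brooks--Burger-type comparison between the combinatorial Laplacian on the Cayley graph of $\G/\G(q)$ and the Laplace--Beltrami operator on $\G(q)\bk\bH$ --- set up so that $\phi_0$ plays the role of the constant function --- then shows that the new part of $\Spec(\G(q)\bk\bH)$ below $1/4$ lies above $\gl_0+\vep_1$ for a fixed $\vep_1>0$, uniformly over square-free $q$ prime to $\fB$; tracking the exceptional primes gives, more generally, that new eigenvalues below $\gl_0+\vep_1$ for $q=q'q''$ already appear for the divisor $q'\mid\fB$. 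As $\gt\mapsto\gt(1-\gt)$ strictly decreases on $(1/2,1)$ and $\gd>1/2$, one may pick $\gt\in(1/2,\gd)$ with $\gt(1-\gt)<\gl_0+\vep_1$, and this $\gt$ is a spectral gap, proving (1). For (2), when $\gd>5/6$ the value $\gt=5/6$ is admissible, and the sharper statement --- no new eigenvalue below $5/36$ --- is the original theorem of Gamburd \cite{Gamburd2002}: a new eigenvalue on $\G(q)\bk\bH$ is acted on by $\G/\G(q)$ with no invariant vectors, so by quasirandomness its multiplicity is $\gg$ the least prime divisor of $q$ (and $\gg q$ when $q$ is prime), whereas a Sarnak--Xue-type bound --- from the pre-trace formula together with the orbital estimate $\#\{\g\in\G:\|\g\|<T\}\ll T^{\gd}$ --- forces multiplicity $\ll_{\vep}[\G:\G(q)]^{2(1-s)+\vep}$ for an eigenvalue $s(1-s)$; since $[\G:\G(q)]\asymp q^{3}$, the two are compatible only when $s\le5/6$.

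The crux is the second paragraph: Helfgott's product theorem combined with the non-concentration estimate is the one genuinely hard ingredient, and it is precisely here that the hypothesis ``non-elementary'' --- equivalently, Zariski density --- is indispensable, in keeping with the Lagarias remark recalled above. By comparison, the strong-approximation reduction and the Brooks--Burger transfer are soft, although the latter requires care because $\G(q)\bk\bH$ is noncompact of infinite volume, so the comparison is effected on a fixed compact core rather than with $L^{2}$ globally.
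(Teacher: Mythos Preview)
The paper does not prove this theorem; it is simply quoted from \cite{Gamburd2002,BourgainGamburd2007,BourgainGamburdSarnak2008} without argument. Your sketch is a correct outline of what those references contain --- strong approximation to reduce to Cayley graphs of $\SL_2(\Z/q\Z)$, the Bourgain--Gamburd expansion machine (quasirandomness, Helfgott's product theorem, non-concentration), the transfer from the combinatorial to the hyperbolic spectral gap, and Gamburd's multiplicity tension (Frobenius lower bound versus a Sarnak--Xue-type upper bound from lattice-point counting) for the explicit value $\gt=5/6$ --- so there is nothing in the paper to compare against: you have supplied the argument the paper deliberately omits.
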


\subsection{Counting with Weights uniformly in Level}
Allowing some ``smoothing'', one can count uniformly in cosets of orbits of level $q$ with explicit error terms.
We fix a non-zero vector ${\bx}_0\in \z^3$ with $Q({\bx}_0)=0$
and set $$\Or={\bx}_0\G .$$ 
We denote by $N_0$
the stabilizer subgroup of ${\bx}_0$ in $G:=\SL_2(\br)$.
Then $$N_0=g_0Ng_0^{-1}$$ for some $g_0\in \SL_2(\q)$,
where $N$ denotes the upper triangular subgroup of $G$.

Set $K_0:=g_0\SO_2(\br) g_0^{-1}$
and choose $\eta>0$ so that a $K_0$-invariant
 $\eta$ neighborhood $U_\eta$ of $e$ in $G$ injects to $\G\bk G$.
Let $\psi:=\psi_\eta$ be a non-negative smooth $K_0$-invariant function
 on $G$ supported in $U_\eta$ with $\int \psi dg=1$.


Denote by $B_T$ a $K_0$-invariant norm ball in $\br^3$ about the origin
 with radius $T$.
\begin{Def} \label{xiDef}{\rm
The weight $\xi_T: \br^3\to \br_{\ge 0}$ is defined as follows:
$$ \xi_T({\bx})=\int_G \chi_T({\bx}g)\psi(g) dg $$
where $\chi_T$ denotes the
 characteristic function of $B_T$.}\end{Def}

The sum  of $\xi_T$ over $\Or$ is precisely a smoothed count for $\#\Or\cap B_T$ satisfying:
$$\sum_{{\bx}\in\Or}\xi_T({\bx})\asymp \# \Or\cap B_T .$$

\begin{thm}\label{thm:count2}
Let $\gt$ be the spectral gap for $\G$.
\begin{enumerate}
\item
As $T\to\infty$,
$$
\Xi(T)
:=
\sum_{{\bx}\in{\Or}} \xi_{T}({\bx})
\sim
c\cdot T^{\gd},
$$
for some $c>0$. \\

\item For square-free $q$, write $q=q'q''$ with $q'\mid \fB$ and $(q'',\fB)=1$. Let $\G_{1}(q)$ be any group satisfying
$$
\G(q)\subset\G_{1}(q)\subset\G
.
$$
Let $N_0$ be the stabilizer of ${\bx}_{0}$ in $G$, and assume that
$$
\G_{1}(q)\cap N_0 = \G\cap N_0.
$$
Fix any $\g_{0}\in\G$ and 
$\vep>0$.
Then
 as $T\to\infty$,
 \beann
\sum_{{\bx}\in{\bx}_{0} \G_{1}(q)}
\xi_{T}({\bx} \g_{0})
&=&
{1\over [\G:\G_{1}(q)]}
\cdot
\bigg(
\Xi(T)
+
\cE(T,q',\g_{0})
\bigg)
\\
&&
+
O_{\vep}
\bigg(
T^{\gt+\vep}
+
T^{{\frac 12}+\frac3{5}(\gd-{\frac 12})+\vep}
\bigg)
,
\eeann
 where the implied constant does not depend on $q$ or $\g_{0}$. Here
the error term satisfies
 $$
 \cE(T,q',[\g_{0}])
\ll
T^{\gd-\zeta}
$$
for some fixed $\zeta>0$, does not depend on $q''$, and depends only on the class $[\g_{0}]$ in $\G_{1}(q')\bk\G$.
\end{enumerate}

\end{thm}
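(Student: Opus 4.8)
The plan is to deduce Theorem~\ref{thm:count2} from the effective equidistribution result Theorem~\ref{equi}, carried over to the present rank-one setting from \cite{KontorovichOh2008}. First I would pass, via the $\q$-rational covering $\SL_2\to\SO_Q$, from the orbit $\Or = \bx_0\G$ in $\br^3$ to the group $\G<\SL_2(\z)$, so that counting points $\bx\in\Or$ with $\|\bx\|<T$ becomes counting $\g\in\G$ with $g_0^{-1}\g$ in a suitable $K_0$-bi-invariant ball, i.e.\ a count of $\G$-points in an expanding region of $G=\SL_2(\br)$ cut out by the norm. Writing $B_T$ in $KAK$-coordinates adapted to $g_0$, the characteristic function $\chi_T$ becomes (after convolving with $\psi_\eta$) a smooth bi-$K_0$-invariant function on $G$; the smoothed count $\Xi(T)=\sum_{\bx\in\Or}\xi_T(\bx)$ is then an integral over $(N_0\cap\G)\bk N_0$ of a $\G$-automorphic function evaluated along the $A$-flow, which is exactly the quantity controlled by Theorem~\ref{equi}.

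The key steps, in order, are: (i) unfold $\Xi(T)$ against the closed horocycle $(N_0\cap\G)\bk N_0$ and use the Iwasawa/Cartan geometry to see that the $y\to 0$ asymptotics of the pushed horocycle integral translate, after an integration in the $A$-variable against the radial profile of $\xi_T$, into the $T^\gd$ main term with coefficient $c>0$ coming from $\langle\,\cdot\,,\phi_0\rangle$ and the Patterson density; the finitely many exceptional eigenfunctions $\phi_1,\dots,\phi_k$ contribute lower-order terms $T^{s_j}$ with $s_j<\gd$, and the $O(y^{1/2-\frac35(\gd-1/2)-\vep})$ tail in Theorem~\ref{equi} integrates to the stated $O(T^{1/2+\frac35(\gd-1/2)+\vep})$ error. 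This proves part~(1). (ii) For part~(2), replace $\G$ by the finite-index subgroup $\G_1(q)$, with $\bx_0\g_0$ in place of $\bx_0$; the hypothesis $\G_1(q)\cap N_0 = \G\cap N_0$ guarantees the relevant closed horocycle does not shrink, so Theorem~\ref{equi} applies to $\G_1(q)\bk\bH$ with the \emph{same} base horocycle. The main term is then $\tfrac{1}{[\G:\G_1(q)]}$ times the volume factor, i.e.\ $\tfrac{1}{[\G:\G_1(q)]}\Xi(T)$, because the base eigenfunction for $\G_1(q)$ is just the normalized pullback of $\phi_0$; the uniform spectral gap $\gt$ of Theorem~\ref{BGSspec}, applied via Definition~\ref{fBdef}, controls the new spectrum of $\G(q)$ — any new eigenvalue below $\gt(1-\gt)$ comes from the bounded part $q'$, producing the ``bad'' term $\cE(T,q',\g_0)$ that depends only on $[\g_0]\in\G_1(q')\bk\G$ and satisfies $\cE\ll T^{\gd-\zeta}$ — while all genuinely new eigenfunctions at level $q$ have eigenvalue $\ge\gt(1-\gt)$ and hence feed only into the $O(T^{\gt+\vep})$ error, uniformly in $q$ and $\g_0$.

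The step I expect to be the main obstacle is obtaining the \emph{uniformity in $q$} of the error terms. One must run the equidistribution argument of Theorem~\ref{equi} not once but simultaneously for every congruence cover $\G_1(q)\bk\bH$, tracking that the implied constants depend only on a fixed Sobolev norm of the test function (which is essentially $\psi_\eta$, independent of $q$) and on the spectral gap $\gt$, which is uniform by Theorem~\ref{BGSspec}. This requires: a uniform-in-$q$ version of the bound in Theorem~\ref{thm:hors} for the relevant eigenfunctions on the covers (so that the horocycle integrals of $\phi_j$ converge with controlled rate), and care that the spectral expansion truncated at $\gt(1-\gt)$ leaves a remainder whose $L^2\to L^\infty$ or Sobolev bound is uniform in the level. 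The bookkeeping separating the ``bad'' contributions (from $q'\mid\fB$, of which there are finitely many divisor classes) into $\cE(T,q',\g_0)$ from the ``good'' contributions (absorbed into $T^{\gt+\vep}$) is the delicate combinatorial heart; everything else is a routine adaptation of \cite{KontorovichOh2008} from the three-dimensional to the two-dimensional hyperbolic setting, together with the geometric translation between norm balls $B_T$ and $KAK$-truncations on $G$.
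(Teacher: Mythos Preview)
Your proposal is correct and follows essentially the same route as the paper: unfold the smoothed count against the closed horocycle, apply Theorem~\ref{equi} (via the inner-product formulation, Proposition~\ref{h1} and its level-$q$ analogue Proposition~\ref{FTPsiq}), and then use the spectral gap to separate the oldform contributions from the genuinely new spectrum at level $q$.

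One point of departure worth noting: the uniformity in $q$ is \emph{not} obtained by a uniform-in-$q$ version of Theorem~\ref{thm:hors}, as you suggest. The paper's mechanism is simpler. The test function $\Psi_{\g_{0}}^{q}=\sum_{\g\in\G_{1}(q)}\psi(\g_{0}^{-1}\g\,\cdot\,)$ has the same Sobolev norms as $\psi$ itself (since $\psi$ is supported in a ball that injects), so the error term in Proposition~\ref{FTPsiq} is automatically uniform in $q$ and $\g_{0}$. As for the eigenfunctions that appear in the main term: by the definition of the spectral gap $\gt$ and the ramification number $\fB$, every eigenfunction with parameter $s_{j}>\gt$ at level $q$ is an oldform pulled back from some fixed level $q'\mid\fB$. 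Hence its horocycle period is computed once and for all at level $q'$ (Lemmata~\ref{qto1a} and~\ref{lemFT} supply the exact scaling by $[\G_{1}(q'):\G_{1}(q)]^{-1/2}$), and there is no need to re-run Theorem~\ref{thm:hors} on the varying covers. The finitely many ``bad'' oldforms from the levels $q'\mid\fB$ are exactly what get packaged into $\cE(T,q',[\g_{0}])$; everything with $s_{j}\le\gt$ is absorbed into the $O(T^{\gt+\vep})$ term. So your bookkeeping instinct is right, but the technical worry you flag dissolves once you recognize that the ``surviving'' spectrum lives on a fixed finite family of quotients.
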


\begin{rmk}
{\rm
Assuming
that $\G\cap N_0$ is a lattice in $N_0$,
 \cite{Kontorovich2009}
gives the above uniform count with the last error term
$$
T^{{\frac 12}+\frac3{5}(\gd-{\frac 12})+\vep}
$$
replaced by a best possible error of
$$
T^{{\frac 12}}\log T
.
$$
}
\end{rmk}








\

\subsection{Zariski Density of Orbits of Pythagorean Triples}
\

For simplicity, we will use the notation
$\cP(R)$ to denote the set of all integers having at most $R$ prime divisors.

Let ${\bx}_{0}\in\Z^{3}$ be a non-zero Pythagorean triple on the cone
$$
Q({\bx})=x^{2}+y^{2}-z^{2}
=0 $$
and $\G<\SO_Q(\Z)$ a non-elementary finitely generated subgroup.
Set
$$\Or:={\bx}_{0}\cdot \G.$$

Given a polynomial $F$ which is integral on $\Or$, our goal
is to find ``small''
 values for $R=R(\Or,F)$,
 for which $F$ ``often'' has at most $R$ prime factors.

In fact, when studying such thin orbits, the correct  notion of ``often''  is not ``infinitely often'', but instead one should require Zariski density. That is, the set of ${\bx}\in\Or$ for which $F({\bx})\in\cP(R)$
should not lie on a proper subvariety of the smallest variety 
containing $\Or$. We illustrate this condition with the following examples.

\subsubsection{Example I: Area}
\

Recall that given any integral Pythagorean triple ${\bx}=(x,y,z)$ which is also primitive (that is, there is no common divisor of $x$, $y$ and $z$), there exist coprime integers $u$ and $v$ of opposite parity (one even, one odd) such that, possibly after switching or negating $x$ and $y$, we have  the ancient parametrization 
\begin{equation*}\label{param}
x=u^2-v^2,\quad y=2u\,v,\quad z=u^2+v^2.
\end{equation*}
In fact, this is just a restatement of the group homomorphism $\SL_2(\br)\to \SO(2,1)$ given by
 \begin{equation*}
\mattwo abcd \mapsto
 \begin{pmatrix}
{a^2-b^2-c^2+d^2\over 2}& {ac-bd} &{a^2-b^2+c^2-d^2\over 2}\\
{ab-cd}& {bc+ad}& {ab+cd}\\
{a^2+b^2-c^2-d^2\over 2}& {ac+bd} &{a^2+b^2+c^2+d^2\over 2}
\end{pmatrix}
\end{equation*}
where $\SL(2,\R)$ acts on $(u,v)$ and $\SO(2,1)$ acts on $(x,y,z)$.

Consider the ``area'' ${\frac 12} xy$ of the triple ${\bx}$ (which may be negative).
It is elementary that the area is always divisible by $6$, so
the function
\be\label{fdef}
F_{\cA}({\bx})
:=
{1\over 12}xy
=
{1\over 6}(u+v)(u-v)u\,v
\ee
is integer-valued on $\Or$.

\begin{rmk}
{\rm
As above, we insist that the polynomial $F$ is integral on $\Or$, but it need not necessarily have integer coefficients.
}

\end{rmk}

As \eqref{fdef} has four irreducible components, it is easy to show that there are only finitely many triples ${\bx}$ for which $F_{\cA}({\bx})\in\cP({2})$, that  is, the product of at most two primes. Restricting to a subvariety such as
$$
u=v+1,
$$
it follows conjecturally from the Hardy-Littlewood $k$-tuple conjectures  \cite{HardyLittlewood1922} that
$$
\frac1{12} xy=
{1\over 6}(u+v)(u-v)u\,v
=
{1\over 6}(2v+1)\cdot 1\cdot (v+1)\cdot v
$$
 will be the product of three primes for infinitely many $v$.

Since the set of triples generated in this way lies on a subvariety, it is not Zariski dense. On the other hand,
it was recognized in \cite{BourgainGamburdSarnak2008} that the recent work of Green and Tao \cite{GreenTao2006} proves the infinitude and Zariski density of
the set of all primitive Pythagorean triples ${\bx}$ for which $F_{\cA}({\bx})\in\cP({4})$, that is, has at most four prime factors.

\begin{rmk}
{
\rm
The results of Green-Tao do not apply to thin orbits, and neither do the conjectures of Hardy-Littlewood.
Indeed, 
we conjecture that
that if $\Or$ is thin
and $\G$ has no unipotent elements (which would furnish an affine injection into $\Or$),
 then there are only {\it finitely-many} points ${\bx}$ for which $F_{\cA}({\bx})\in\cP({3})$! 
On the other hand, allowing $4$ primes should lead to a Zariski dense set of triples ${\bx}$. Below, we exhibit certain thin orbits for which there is a Zariski dense set of ${\bx}$ with $F_{\cA}({\bx})\in\cP(25)$. 
}
\end{rmk}

\begin{rmk}
{
\rm
The critical number, $4,$ of prime factors above is related to the {\it sieve dimension} for this pair $(\Or, F)$. We return to this issue shortly, cf. Remark \ref{rmk:LiuS}.
}
\end{rmk}

\subsubsection{Example II: Product of Coordinates}

Consider now the product of coordinates $xyz$ for triples 
${\bx}
\in\Or$. It is elementary that $xyz$ is divisible by $60$, so
the function
\be\label{f2def}
F_{\cC}({\bx})
:=
{1\over 60}xyz
=
{1\over 30}(u+v)(u-v)u\,v(u^2+v^2)
\ee
is integer-valued.

\begin{figure}
\hskip-3.5in
\includegraphics[width=2.5in]{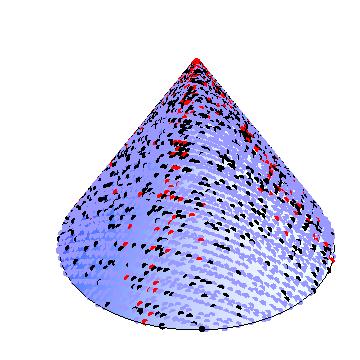}
\vskip-2.25in \beann\hskip1.7in &
\includegraphics[width=.1in]{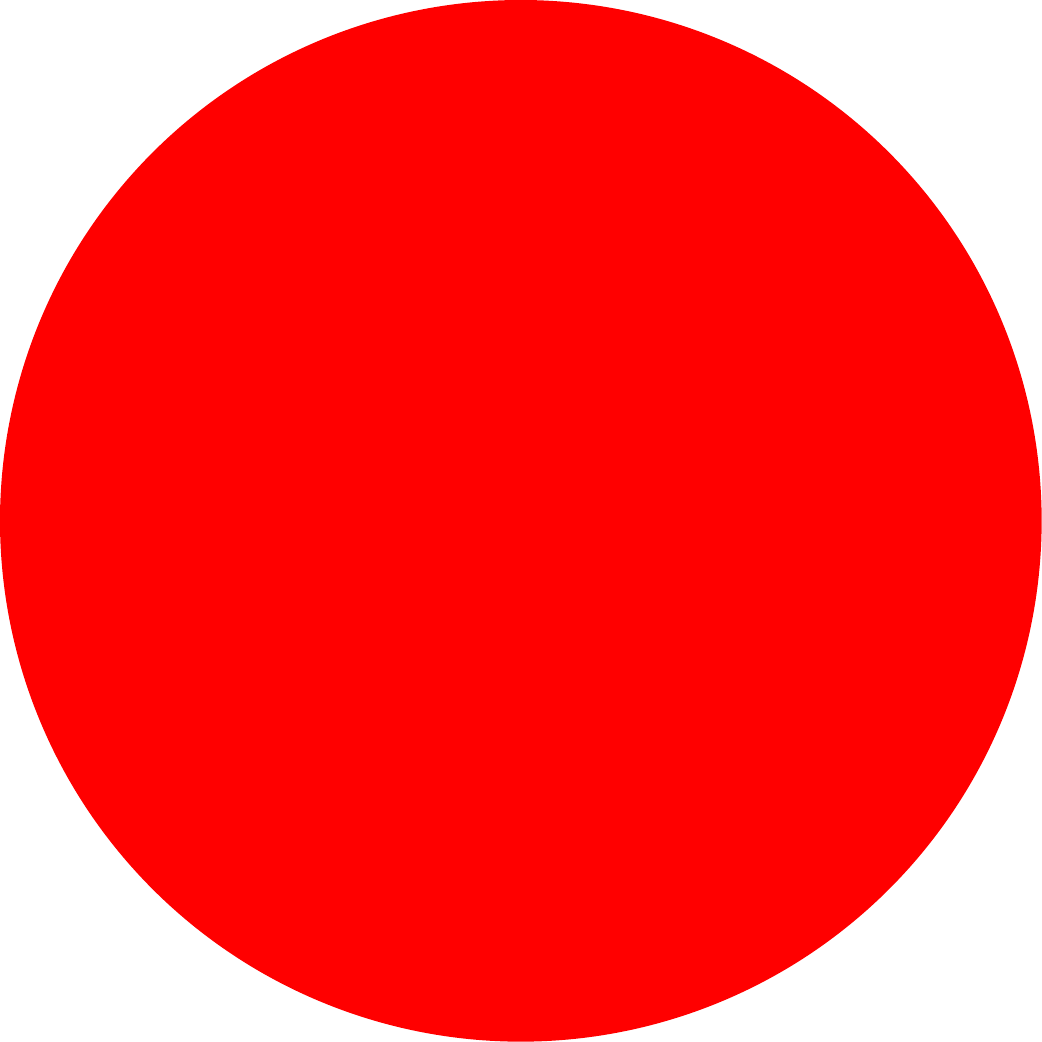} &
\text{${1\over 60}xyz$ has at most four prime factors} \\ &
\includegraphics[width=.1in]{black} &
\text{${1\over 60}xyz$ has exactly five prime factors}
\\ & \includegraphics[width=.1in]{light}
& \text{${1\over 60}xyz$ has  six or more  prime factors}
\eeann \vskip.65in \hskip2in
\caption{ The full orbit of all primitive Pythagorean triples. }\vskip.15in
\label{full} \end{figure}

\begin{figure} \hskip-3.5in
\includegraphics[width=2.5in]{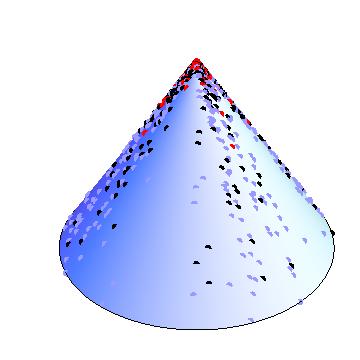}
\vskip-2.25in \beann\hskip1.7in &
\includegraphics[width=.1in]{red} &
\text{${1\over 60}xyz$ has at most four prime factors} \\ &
\includegraphics[width=.1in]{black} &
\text{${1\over 60}xyz$ has exactly five prime factors}
\\ &\includegraphics[width=.1in]{light} &
\text{${1\over 60}xyz$ has  six or more  prime factors}
\eeann \vskip.65in \hskip2in
\caption{A thin  orbit $\Or$ of Pythagorean triples.}
\vskip.15in \label{fig1} \end{figure}


Now we note that as \eqref{f2def} has five irreducible components (and 
the {\it sieve dimension}  is five). Therefore  there are only finitely many triples ${\bx}$ for which $F_{\cC}({\bx})\in\cP(3)$. 
Restricting to a subvariety such as
$$
u=v+3,
$$
it follows conjecturally from Schinzel's Hypothesis H \cite{SchinzelSierpinski1958} that

$$
{1\over 30}(u+v)(u-v)u\,v(u^2+v^2)
=
{1\over 30}
(2v+3)
\cdot
(
3
)
\cdot
(v+3)
\cdot
v
\cdot
(2v^2+6v+9)
$$
 will be the product of four primes for infinitely many $v$. Again, this set is not Zariski dense in the cone.
 See
Figure \ref{full},
  where it is clear that such points frolic near the $x$ or $y$ axes. 

On the other hand, it is a folklore conjecture that the set of triples ${\bx}$ for which $F_{\cC}({\bx})\in\cP(5)$ 
spreads
out in every direction.
For the full orbit of all primitive Pythagorean triples
 (rather than a thin one),
the best known bound for the number of prime factors in $F_{\cC}({\bx})$
follows from the Diamond-Halberstam-Richert sieve
\cite{DiamondHalberstamRichert1988,DiamondHalberstam1997}. Their work shows that
$F_{\cC}({\bx})\in\cP(17)$ 
infinitely often.%
\footnote{
In fact they restrict to  a subvariety in deriving the numer $17$.
}

Again, when the orbit $\Or$ is thin without affine injections,
we conjecture
that there will be only finitely many ${\bx}$ for which $F_{\cC}({\bx})\in\cP(4)$ 
whereas five factors will be Zariski dense. Compare
Figure \ref{full} to Figure \ref{fig1}.
We will exhibit certain thin orbits
for which there is a Zariski dense set of ${\bx}$ with $F_{\cC}({\bx})\in\cP(29)$. 

\begin{rmk}\label{rmk:LiuS}
{\rm
Sieve dimension is not merely a function of the polynomial $F$ but really depends on the pair $(\Or,F)$. In the related recent work   \cite{LiuSarnak2007},  Liu and Sarnak
consider $F_{\cC}({\bx})=xyz$, where the orbit $\Or={\bx}_{0}\cdot \G$ is generated from a point ${\bx}_{0}\in\Z^{3}$ on a one- or two-sheeted hyperboloid $Q({\bx})=t$, where $t\neq0$ and $Q$ is an indefinite integral ternary quadratic form which is anisotropic. Then the spin group of $G=\SO_{Q}(\R)$ consists of the elements of norm one in a quaternion division algebra over $\Q$, and $\G$ is the set of {\it all} such integral elements.

In particular, their orbit is full, whereas the focus of this paper is on thin orbits. A common feature, though, is that there do not exist non-constant polynomial parametrizations of points in $\Or$ (in our case this corresponds precisely to $N_0\cap \G$ being trivial).

The sieve dimension for Liu-Sarnak's pair $(\Or,F_{\cC})$ is $3$ (whereas in our case, the same function $F=F_{\cC}$ has sieve dimension $5$), and they prove the Zariski density of the set of points ${\bx}\in\Or$ for which  $F_{\cC}({\bx})$ is in $\cP(26)$.
The precise definition of sieve dimension is given in Definition \ref{sieveD}.
}
\end{rmk}



\subsection{The Diamond-Halberstam-Richert Weighted Sieve}

\

Let $\cA
=\{a_n\}$ be a
sequence of non-negative real numbers,
all but finitely many  of which
are zero.

For $R\ge1$,
let
$\cP(R)$ be the set of all integers having at most $R$ prime divisors counted with multiplicity.
Let
$\cW=\{\gw(n)\}$ denote a certain sequence of weights supported on square-free numbers 
satisfying
\be\label{gwdef}
\gw(n)\le 0
\quad
\text{  if }
\quad
n\notin \cP(R)
,
\ee
and being of convolution type, that is
\be\label{convType}
\gw(n)=\sum_{d|n}\gw(d).
\ee

Let
$$
S(\cA\cW):= \sum_{n} a_n\,\gw(n)
.
$$
If we can
construct a suitable sequence $\cW$
with
a good lower bound estimate for $S(\cA\cW)$, 
then we can conclude by virtue of \eqref{gwdef}  that there are elements  $a_n\in\cA$ with $n\in\cP(R)$, that is, having at most $R$ prime divisors.


Moreover, by \eqref{convType} we can extract estimates for $S(\cA\cW)$ from knowledge of the distribution of $\cA$ along certain arithmetic progressions. For $q\ge1$ a square-free integer, let
$$
\cA_q:= \{a_n\in\cA:n\equiv0(q)\} \quad \text{and}\quad
|\cA_q|:=\sum_{n\equiv0(q)} a_n,
$$
so that
$$
S(\cA\cW) = \sum_q \gw(q) |\cA_q|.
$$
Assume there exists an approximation $\cX$ to $|\cA|:=\sum_n a_n$
and a non-negative multiplicative function $g(q)$ so that $g(q)\cX$ is an approximation to $|\cA_q|%
$. Assume that $g(1)=1$,
 $g(q)\in[0,1)$ for $q>1$,
  and that for constants $K\ge2$, $\gk\ge1$
we have the local density bound
\be\label{locDens}
\prod_{{z_1\le p \le z}
} (1-g(p))^{-1} \le  \left(\frac{\log z}{\log z_1}\right)^\gk\left(1+{K\over\log z_1}\right)
\ee
for any $2\le z_1<z$.
\begin{Def}\label{sieveD}
{\rm
The number $\gk$ appearing in \eqref{locDens} is called the {\it sieve dimension}.
(Note that it is not unique, as any larger value also satisfies \eqref{locDens}; in practice one typically takes the least allowable value.)
}
\end{Def}
We require that the remainder terms
$$
r_q := |\cA_q| - g(q) \cX
$$
be small an average, that is, for some constants $\tau\in(0,1)$ and $A\ge1$,%
\footnote{
Recall that $\nu(q)$ denotes the number of prime factors of $q$.
}
\be\label{taudef}
\sum_{q<\cX ^\tau(\log \cX)^{-A}
\atop q\text{ squarefree}
}
4^{\nu(q)} |r_q| \ll {\cX\over \log^{\gk+1}\cX}
.
\ee

Finally, we introduce a parameter $\mu$ which controls the number of terms in $\cA$ which are non-zero. Precisely, we require that
\be\label{mudef}
\max\{n\ge1: a_n\neq0\}\le \cX^{\tau\mu}.
\ee
We now state
\begin{Thm}[\cite{DiamondHalberstamRichert1988,DiamondHalberstam1997}]\label{sieve}
Let $\cA$,
$z$,
$\cX$, $g$, $\gk$, $\mu$ and $\tau$ be as described above.
\begin{enumerate}
\item
Let $\gs_\gk(u)$ be the continuous solution of the differential-difference problem:
\be
\begin{cases}
{u^{-\gk}\gs(u)=A_\gk^{-1},} & \text{for $0<u\le2$, $A_\gk = (2e^\g)^\gk\G(\gk+1)$,}\\
{(u^{-\gk}\gs(u))'=-\gk u^{-\gk-1} \gs(u-2),}&\text{for $u>2$,}
\end{cases}\ee
where $\g$ is the Euler constant. Then there exist two numbers $\ga_\gk$ and $\gb_\gk$ satisfying $\ga_\gk\ge\gb_\gk\ge2$ such that the following simultaneous differential-difference system has continuous solutions $F_\gk(u)$ and $f_\gk(u)$ which satisfy
$$
F_\gk(u)=1+O(e^{-u}), \quad f_\gk(u)=1+O(e^{-u}),
$$
 and $F_\gk$ (resp. $f_\gk$) decreases (resp. increases) monotonically towards $1$ as $u\to\infty$:
\be
\begin{cases}
{F(u)=1/\gs_\gk(u),} & \text{for $0<u\le\ga_\gk,$}\\
{f(u)=0,} & \text{for $0<u\le\gb_\gk,$}\\
{(u^\gk F(u))'=\gk u^{\gk-1}f(u-1),} &\text{for $u>\ga_\gk,$}\\
{(u^\gk f(u))'=\gk u^{\gk-1}F(u-1),} &\text {for $u>\ga_\gk.$}
\end{cases}\ee

\item
For any two real numbers $u$ and $v$ with
$$
 \tau^{-1}<u\le v, \gb_\gk<\tau v,
$$
there exist weights $\cW=\{\gw(n)\}$ such that
$$
S(\cA\cW) \gg \cX \prod_{p<X^{1/v}}(1-g(p))
,
$$
provided that
\be\label{Ris}
R>\tau \mu u - 1 +
 {\gk \over f_\gk(\tau v)} \int_1^{v/u}F_\gk(\tau v-s)\left(1-\frac uv s\right){ds\over s}
 .
\ee
\end{enumerate}
\end{Thm}

\subsection{Statement of the Main Theorem}

The following is the main result of this paper.

\begin{thm}\label{thmMain} Let $Q({\bx})=x^2+y^2-z^2$, ${\bx}_{0}\in \z^3$ a non-zero vector with $Q({\bx}_0)=0$,
and $\G <\SO_Q(\z)$  a finitely generated subgroup with $\gd >1/2$.
Denote by $\gt$ the spectral gap of $\G$.
 Let $F$ be a polynomial which is integral on $\Or$, such that the pair
 $(\Or,F)$ is strongly primitive.

Then the following hold:
\begin{enumerate}
\item
There are infinitely many ${\bx}\in\Or$ such that $F({\bx})$ has at most $R$ prime factors,
where $R(\Or,F)$ is given by \eqref{Ris} with
\be\label{mutau}
\tau
<
\min
\left(
{\gd-\gt
\over 2\gd}
,
{
\gd-{\frac 12}\over
5\gd
}
\right)
,
 \qquad\text{ and }\qquad
\mu
>
\max
\left(
{2\over
\gd-\gt}
,
{5\over
\gd-1/2}
\right)
.
\\
\ee
\item
Under the further assumption that $N_0\cap \G$ is a lattice in $N_0$ for $N_0=\op{Stab}_G({\bx}_0)$,
   the bounds in \eqref{mutau} improve to
\be\label{taumuLattice}
\tau
<
{\gd-\gt
\over 2\gd}
,
 \qquad\text{ and }\qquad
\mu
>
{2\over
\gd-\gt}
.
\ee
\item
 Denoting by $\gk$ be the sieve dimension of $(\Or,F)$,
$$\#\{{\bx}\in \Or: \|{\bx}\|<T, F({\bx})\in \cP(R)\}
\asymp
{T^{\gd}\over (\log T)^{\gk}}.
 $$
 In particular, this set is Zariski dense in the cone $Q=0$.
\end{enumerate}
\end{thm}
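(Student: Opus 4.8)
The plan is to feed the orbit into the Diamond--Halberstam--Richert weighted sieve (Theorem \ref{sieve}), using the uniform counting statement Theorem \ref{thm:count2} as the arithmetic input. First I would invoke the $\q$-rational covering $\SL_2\to\SO_Q$ of \S\ref{secBack} to regard $\G$ as a finitely generated subgroup of $\SL_2(\z)$ with $\gd=\gd_\G>1/2$, and write $N_0=\op{Stab}_G(\bx_0)$ with $G=\SL_2(\br)$. Using the smooth weight $\xi_T$ of Definition \ref{xiDef}, I would take as the sieving sequence $\cA=\{a_n\}$ with $a_n:=\sum_{\bx\in\Or,\,F(\bx)=\pm n}\xi_T(\bx)$, and set the approximant $\cX:=\Xi(T)=\sum_{\bx\in\Or}\xi_T(\bx)$, so that $\cX\sim c\,T^{\gd}$ by Theorem \ref{thm:count2}(1). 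For square-free $q$, let $\G_1(q)$ be the stabilizer in $\G$ of the residue $\bx_0\bmod q$; this contains $\G(q)$, and automatically satisfies $\G_1(q)\cap N_0=\G\cap N_0$ since $N_0$ fixes $\bx_0$ exactly. Then $\{\bx\in\Or:q\mid F(\bx)\}$ is the disjoint union of the cosets $\bx_0\G_1(q)\g_0$ over the ``bad'' point-residues $[\g_0]$ with $F(\bx_0\g_0)\equiv0\ (q)$, and applying Theorem \ref{thm:count2}(2) coset by coset gives $|\cA_q|=g(q)\cX+r_q$, where $g(q)$ is the proportion of bad residues on the cone mod $q$ --- a multiplicative function with $g(1)=1$, and with $g(q)\in[0,1)$ for all $q>1$ precisely because $(\Or,F)$ is strongly primitive (so a non-bad residue always exists mod any $q$).

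\textbf{Verifying the axioms.} The local density bound \eqref{locDens}, and hence the sieve dimension $\gk$ of Definition \ref{sieveD}, would be established by computing $g(p)$ for primes $p$: by strong approximation for $\G$ in $\SL_2$ (valid for $p$ outside a fixed modulus, which one absorbs into the ramification number $\fB$), this reduces to counting the points of $\{F\circ\bx_0=0\}$ on the reduction mod $p$ of the cone, and the Lang--Weil estimates yield the required bound with $\gk$ equal to the number of geometrically irreducible components of $F$ (giving $\gk=1,4,5$ for $F_{\cH},F_{\cA},F_{\cC}$). For the remainder axiom \eqref{taudef}, Theorem \ref{thm:count2}(2) supplies, uniformly in $q$ and $\g_0$, an error of size $\ll T^{\gt+\vep}+T^{\frac12+\frac35(\gd-\frac12)+\vep}$ per coset, together with the term $\tfrac{1}{[\G:\G_1(q)]}\cE(T,q',[\g_0])$ with $\cE\ll T^{\gd-\gz}$ depending only on the bounded data $(q',[\g_0]\bmod\G_1(q'))$. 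Since the number of bad point-residues mod $q$ is $\ll q^{1+\vep}$ and $[\G:\G_1(q)]\asymp q^{2}$, summing against $4^{\nu(q)}$ over square-free $q<\cX^{\tau}(\log\cX)^{-A}$ gives a total of size $\ll \cX^{2\tau}\big(T^{\gt+\vep}+T^{\frac12+\frac35(\gd-\frac12)+\vep}\big)+T^{\gd-\gz/2}$, which is $\ll\cX/\log^{\gk+1}\cX$ exactly when $\tau$ obeys \eqref{mutau}. Finally $\max\{n:a_n\neq0\}\ll T^{\deg F}$, so \eqref{mudef} holds once $\gd\tau\mu>\deg F$, which, combined with the upper bound on $\tau$, forces the lower bound on $\mu$. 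Under the extra hypothesis that $\G\cap N_0$ is a lattice in $N_0$, the remark following Theorem \ref{thm:count2} replaces $T^{\frac12+\frac35(\gd-\frac12)}$ by $T^{1/2}\log T$, which is absorbed by $T^{\gt}$ (as $\gt>1/2$) and thereby deletes the second term in each minimum/maximum of \eqref{mutau}, yielding \eqref{taumuLattice} and hence part (2).

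\textbf{Conclusion and Zariski density.} With all axioms in hand, I would apply Theorem \ref{sieve}(2) with $u,v$ chosen optimally subject to $\tau^{-1}<u\le v$ and $\gb_\gk<\tau v$, obtaining weights $\cW$ with $S(\cA\cW)\gg\cX\prod_{p<\cX^{1/v}}(1-g(p))\gg T^{\gd}/(\log T)^{\gk}>0$ provided $R$ exceeds the quantity in \eqref{Ris}. By the sign condition \eqref{gwdef} this produces $\bx\in\Or\cap B_T$ with $F(\bx)\in\cP(R)$, and letting $T\to\infty$ gives infinitely many such $\bx$: this is part (1). Moreover $S(\cA\cW)\le\sum_{n\in\cP(R)}a_n\gw(n)$ and the DHR weights are bounded by a fixed power of $\log T$ on the support of $\cA$, so $\#\{\bx\in\Or\cap B_T:F(\bx)\in\cP(R)\}\gg T^{\gd}/(\log T)^{\gk}$; the matching upper bound $\ll T^{\gd}/(\log T)^{\gk}$ follows from a standard Selberg upper-bound sieve, bounding $\#\{\bx\in\Or\cap B_T:(F(\bx),\prod_{p<T^{c}}p)\mid d\}$ and summing over the $d$ built from the small prime factors. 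This gives the asymptotic in part (3). For Zariski density, one notes that $\Or$ is Zariski dense in the irreducible cone $Q=0$ (as $\G$ is non-elementary, hence Zariski dense in $\SO_Q$), and that $\Or\cap V$ for any proper subvariety $V$ is a finite union of orbits of subgroups of $\G$ of infinite index, hence of strictly smaller critical exponent, so $\#(\Or\cap V\cap B_T)\ll T^{\gd'}$ with $\gd'<\gd$; since our set has cardinality $\gg T^{\gd}/(\log T)^{\gk}$ it cannot lie in any such $V$.

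\textbf{Main obstacle.} The crux is the arithmetic input of Theorem \ref{thm:count2}(2) --- a level of distribution for $\Or$ along congruence classes with a \emph{power-of-$T$} saving that is uniform in the modulus --- which in turn rests on the effective equidistribution of expanding closed horocycles (Theorem \ref{equi}), the eigenfunction bounds (Theorem \ref{thm:hors}), and the uniform spectral gap (Theorem \ref{BGSspec}). The most delicate bookkeeping is tracking the $q$-dependence of every error term through the sieve: separating the ``bad part'' $q'\mid\fB$ from the rest, counting bad point-residues mod $q$ with the correct exponent (so that the exponent $2$ rather than $3$ appears in $\cX^{2\tau}$ above, which is what makes \eqref{mutau} rather than a worse bound sufficient), and checking that the non-trivial main-term error $\cE$ aggregates harmlessly because it depends only on bounded data.
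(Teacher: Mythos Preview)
Your approach is essentially the paper's: smooth weights $\xi_T$, the stabilizer-of-residue group $\G_1(q)=\G_{\bx_0}(q)$, Theorem~\ref{thm:count2} applied coset by coset, verification of the DHR axioms, and Theorem~\ref{sieve}. Two small technical differences are worth flagging. First, the paper isolates the ``bad-modulus'' contribution $g(q)\cX_{q'}$ (your $\cE$-term) not by absorbing it into $r_q$ as you do, but by passing to an auxiliary sequence $\cA'$ with $|\cA'_q|:=g(q)\cX+r_q$, applying Theorem~\ref{sieve} to $\cA'$, and then bounding $|S(\cA\cW)-S(\cA'\cW)|=\big|\sum_q\gw(q)g(q)\cX_{q'}\big|\ll\cX^{1-\eta}$. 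Your direct absorption also works since $\sum_q 4^{\nu(q)}g(q)\ll(\log\cX)^{O(1)}$, but the $\cA'$ device is cleaner. Second, the paper computes $g(p)$ by hand for the three specific $F$'s (Lemma~\ref{gis}) rather than invoking Lang--Weil abstractly.

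There is one genuine gap. Your Zariski density argument asserts that $\Or\cap V$, for $V$ a proper subvariety of the cone, is ``a finite union of orbits of subgroups of $\G$ of infinite index, hence of strictly smaller critical exponent.'' This is not correct: the set $\{\g\in\G:\bx_0\g\in V\}$ is in general just an intersection of $\G$ with a proper Zariski-closed subset of $\SO_Q$ and has no reason to be a union of cosets of subgroups, nor is there a general theorem giving $\#(\Or\cap V\cap B_T)\ll T^{\gd'}$ with $\gd'<\gd$ from such a description. The paper does not spell this step out either (it cites \cite{Kontorovich2009} for the upper bound and leaves the ``in particular'' implicit), but a correct route is to apply an upper-bound sieve to an auxiliary polynomial $G$ vanishing on $V$ but not on the cone, which shows that the set of $\bx\in\Or\cap B_T$ with $G(\bx)=0$ is $o\!\big(T^{\gd}/(\log T)^{\gk}\big)$. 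A minor related point: your observation that \eqref{mudef} requires $\gd\tau\mu>\deg F$ is sharper than the paper's own ``$a_n(T)=0$ for $n\gg T$'' (which is only literally true for linear $F$), but note that it then yields $\mu>\tfrac{2\deg F}{\gd-\gt}$ rather than the stated \eqref{mutau} when $\deg F>1$.
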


\subsection{Explicit Values of $R(\Or,F)$}

One must still work somewhat to obtain  actual values of $R$ from Theorem \ref{thmMain}.
We now
state the smallest values of $R(\Or,F)$
 which are achieved from Theorem \ref{thmMain}, at the expense of requiring the critical exponent $\gd$ to be close to $1$.\\

 The first statement we give is unconditional.
Assuming $\gd>5/6$
 and using Gamburd's spectral gap $\gt=5/6$ from Theorem \ref{BGSspec}, the bounds \eqref{mutau} and \eqref{taumuLattice} are equivalent. Hence assuming $N_0\cap\G$ is a lattice in $N_0$ and using the optimal error terms in \cite{Kontorovich2009} does not improve the final values of $R$.
 Said another way, the fact that our counting theorem is not optimal does not hurt the final values of $R$, unconditionally.

In the next three theorems, we keep the notation $Q, \Gamma, \Or$ from Theorem \ref{thmMain}. Let
$$
F_{\cH}({\bx})=z, \qquad
F_{\cA}({\bx})=\frac1{12}xy, \qquad
F_{\cC}({\bx})=\frac1{60}xyz,
$$
and assume the pair $(\Or,F)$ is strongly primitive.

The following is the same as Theorem \ref{thm:intro}:
\begin{thm}\label{numbs}
Let $\G$ have critical exponent
$$
\delta_{\cH}>0.9992
,\qquad
\delta_{\cA}>0.99995
,\qquad
\delta_{\cC}>0.99677
.
$$
Then the proportion of ${\bx}\in\Or$ with $F({\bx})\in\cP(R)$ with $\|{\bx}\|<T$ is
$$
\asymp
{1\over (\log T)^{\gk}}
,
$$
where
$$
\gk_{\cH}=1,\qquad
\gk_{\cA}=4,\qquad
\gk_{\cC}=5,
$$
and
$$
\boxed{
R_{\cH}=14,\qquad
R_{\cA}=25,\qquad
R_{\cC}=29.
}
$$
\end{thm}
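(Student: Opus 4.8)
The plan is to extract the numerical values of $R$ from the weighted-sieve machinery of Theorem \ref{sieve} by plugging in the optimal admissible $\tau$ and $\mu$ from Theorem \ref{thmMain}(1), together with Gamburd's spectral gap $\gt = 5/6$ from Theorem \ref{BGSspec}(2), which is valid once $\gd > 5/6$. First I would observe that, under the hypothesis $\gd > 5/6$, the two constraints in \eqref{mutau} coincide with those in \eqref{taumuLattice} (since $\gt = 5/6$ makes $\tfrac{\gd-\gt}{2\gd} = \tfrac{\gd - 1/2}{5\gd} \cdot \tfrac{5(\gd - 5/6)}{2(\gd-1/2)}$... more precisely, with $\gt=5/6$ both bounds read $\tau < \tfrac{\gd-5/6}{2\gd}$, $\mu > \tfrac{2}{\gd-5/6}$, which is the binding pair whenever $\gd$ is close to $1$). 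So the free parameters are governed by a single quantity $\tau\mu$, which can be taken arbitrarily close to $1$ as $\gd \to 1$; letting $\gd$ tend to $1$ we may treat $\tau\mu$ as $1 - \epsilon$ for the purposes of the optimization, and then $R$ must only exceed $\tau\mu u - 1 + \tfrac{\gk}{f_\gk(\tau v)}\int_1^{v/u} F_\gk(\tau v - s)(1 - \tfrac{u}{v}s)\,\tfrac{ds}{s}$.

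The key steps, in order, are: (i) fix the sieve dimension $\gk$ for each of the three pairs — $\gk_{\cH}=1$ since $F_{\cH}=z$ is a single linear form (one irreducible component), $\gk_{\cA}=4$ since $F_{\cA}$ has four irreducible components as displayed in \eqref{fdef}, and $\gk_{\cC}=5$ since $F_{\cC}$ has five, as displayed in \eqref{f2def}; one must check that the local densities $g(p)$ computed for the congruence quotients $\bx_0\G_1(q)$ (via Theorem \ref{thm:count2}) satisfy the bound \eqref{locDens} with exactly these dimensions, which uses strong primitivity to rule out degenerate primes. (ii) For each $\gk$, locate the constants $\ga_\gk, \gb_\gk$ and the solutions $F_\gk, f_\gk$ of the Diamond–Halberstam–Richert differential-difference system numerically (these are tabulated in \cite{DiamondHalberstamRichert1988, DiamondHalberstam1997}); for instance $\gk=1$ is the classical linear sieve with $\ga_1 = \gb_1 = 2$. (iii) Minimize the right-hand side of \eqref{Ris} over the allowable region $\tau^{-1} < u \le v$, $\gb_\gk < \tau v$, in the limit $\tau\mu \to 1^-$; this is a one- or two-parameter numerical optimization of a smooth functional of $u, v$. (iv) Take $R$ to be the least integer exceeding the resulting infimum; one then reads off $R_{\cH}=14$, $R_{\cA}=25$, $R_{\cC}=29$, and records the corresponding threshold on $\gd$ coming from requiring $\tau < \tfrac{\gd - 5/6}{2\gd}$ and $\mu > \tfrac{2}{\gd-5/6}$ with a product $\tau\mu$ close enough to $1$ that the optimization still clears the needed integer — this produces the explicit bounds $0.9992$, $0.99995$, $0.99677$. (v) The $\asymp (\log T)^{-\gk}$ count is immediate from Theorem \ref{thmMain}(3).

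The main obstacle is step (iii)–(iv): the value of $R$ depends delicately on the numerical solution of the DHR system and on a careful two-variable optimization, and one must be honest about how $\tau\mu \to 1$ interacts with the monotone decay $F_\gk(u) = 1 + O(e^{-u})$ — pushing $v$ large makes $F_\gk(\tau v - s)$ close to $1$ but also forces $\gd$ closer to $1$ through $\mu > 2/(\gd - 5/6)$, so there is a genuine trade-off between the size of $R$ and the required critical exponent. Getting the claimed integers rather than something one or two larger requires choosing $u, v$ near-optimally and verifying that the resulting $\gd$-threshold is indeed $< 1$. A secondary technical point is confirming that $\cA = \{a_n\}$ with $a_n = \sum_{\bx \in \Or,\, F(\bx)=n} \xi_T(\bx)$ satisfies the remainder hypothesis \eqref{taudef} with the $\tau$ dictated above; this is where the uniform-in-level count of Theorem \ref{thm:count2}, with its error terms $T^{\gt+\epsilon} + T^{1/2 + \frac35(\gd - 1/2)+\epsilon}$ and $T^{\gd - \zeta}$, gets summed against $4^{\nu(q)}$ over $q < \cX^\tau (\log \cX)^{-A}$ — and it is precisely the worse of these two error exponents that produces the two competing constraints in \eqref{mutau}, collapsing to one under $\gt = 5/6$.
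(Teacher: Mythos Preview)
Your proposal is correct and follows essentially the same route as the paper: invoke Theorem \ref{thmMain} with $\gt=5/6$, observe that for $\gd$ near $1$ the binding constraints are $\tau<\tfrac{\gd-5/6}{2\gd}$ and $\mu>\tfrac{2}{\gd-5/6}$ (so $\mu\downarrow 12$ as $\gd\to 1$), and then minimize the right side of \eqref{Ris} numerically for each sieve dimension $\gk=1,4,5$. The only substantive difference is that the paper carries out your step (iii) via the Halberstam--Richert one-parameter substitution $\tau u=1+\gz-\gz/\gb_\gk$, $\tau v=\gb_\gk/\gz+\gb_\gk-1$, which bounds the integral term explicitly and reduces everything to minimizing the elementary function $m(\gz)=\mu(1+\gz-\gz/\gb_\gk)-1+(\gk+\gz)\log(\gb_\gk/\gz)-\gk+\gz\gk/\gb_\gk$ over $0<\gz<\gb_\gk$; this yields the tabulated values (e.g.\ $m(0.1203\ldots)=13.93\ldots$ for $\gk=1$, $\mu=12$) and the $\gd$-thresholds directly, without needing to evaluate $F_\gk$, $f_\gk$ themselves.
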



We now observe the effect of the worse error term on the quality of $R(\Or,F)$, 
conditioned on an improved spectral gap.
%
Assuming $N_0\cap \G$ is a lattice in $N_0$, the counting theorem in \cite{Kontorovich2009} gives an optimal error term, which 
together with our sieve analysis gives 
the following.

\begin{thm}\label{numbs1}
 Assume that
 $N_0\cap \G$ is a lattice in $N_0$,
 and that the spectral gap
 $\gt$ can be arbitrarily close to $1/2$. Then if \\
$$
\delta_{\cH}>0.9265
,\qquad
\delta_{\cA}>0.98805
,\qquad
\delta_{\cC}>0.981675
,
$$
  the conclusion of Theorem \ref{numbs} holds with
$$
\boxed{
R_{\cH}=6,\qquad
R_{\cA}=14,\qquad
R_{\cC}=17.
}
$$
\end{thm}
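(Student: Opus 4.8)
The plan is to deduce Theorem~\ref{numbs1} from Theorem~\ref{thmMain}(2) by running the weighted sieve of Theorem~\ref{sieve} with the sharpest admissible parameters and then optimizing the bound \eqref{Ris} numerically. First I would fix the sieve data. Put $\cA=\{a_n\}$ with $a_n=\sum_{\bx\in\Or,\,F(\bx)=n}\xi_T(\bx)$. Under the hypothesis that $N_0\cap\G$ is a lattice, Theorem~\ref{thm:count2} holds with the best-possible error $T^{1/2}\log T$ of \cite{Kontorovich2009}, and tracing this through the verification of \eqref{taudef} (the bounds \eqref{taumuLattice} of Theorem~\ref{thmMain}(2)) shows that the level of distribution $\tau$ may be taken arbitrarily close to $\tfrac{\gd-\gt}{2\gd}$; since by hypothesis the spectral gap $\gt$ may be taken arbitrarily close to $1/2$, we may take $\tau$ as close as we wish to $\tfrac{\gd-1/2}{2\gd}$, so $\tau^{-1}$ as close as we wish to $\tfrac{2\gd}{\gd-1/2}$. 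By Theorem~\ref{thm:count2}(1) the approximation is $\cX\asymp T^\gd$, while $\max\{n:a_n\neq 0\}\asymp\max_{\bx\in\Or\cap B_T}|F(\bx)|\asymp T^{d_F}$ with $d_F=\deg F$ equal to $1,2,3$ for $F=F_{\cH},F_{\cA},F_{\cC}$ respectively; hence \eqref{mudef} forces $\tau\mu=d_F/\gd$, so $\mu$ may be taken close to $\tfrac{2d_F}{\gd-1/2}$. Finally the sieve dimension is $\gk_{\cH}=1$, $\gk_{\cA}=4$, $\gk_{\cC}=5$, namely the number of absolutely irreducible factors of $F$ on the Zariski closure of $\Or$ (for $F_{\cH}$ the factor $u^2+v^2$; for $F_{\cA}$ the four linear factors of $(u+v)(u-v)uv$; for $F_{\cC}$ those together with $u^2+v^2$), as established when checking \eqref{locDens} in the proof of Theorem~\ref{thmMain}.

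With $\tau,\mu,\gk$ in hand, the plan is to minimize the right-hand side of \eqref{Ris} over real $u,v$ subject to $\tau^{-1}<u\le v$ and $\gb_\gk<\tau v$. Substituting $t=\tau u$ and $w=\tau v$ turns the constraints into $1<t\le w$, $\gb_\gk<w$, and the objective into
$$
\mu\,t-1+{\gk\over f_\gk(w)}\int_1^{w/t}F_\gk(w-s)\left(1-{t\over w}\,s\right){ds\over s},
$$
in which $\mu=\tfrac{2d_F}{\gd-1/2}$ now carries the entire dependence on $\gd$ and enters with the positive coefficient $t>1$. Consequently the minimized value is strictly increasing in $\mu$, hence strictly decreasing in $\gd$. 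At $\gd=1$ one has $\mu=4,8,12$ in the three cases, and a direct computation — using the closed-form linear-sieve functions $F_1,f_1$ with $\ga_1=\gb_1=2$ for $F_{\cH}$, and the numerically computed Diamond--Halberstam--Richert functions and break-points $\ga_\gk,\gb_\gk$ of \cite{DiamondHalberstamRichert1988,DiamondHalberstam1997} for $F_{\cA},F_{\cC}$ — shows the minimum lies strictly below $6,14,17$ respectively. By the monotonicity in $\gd$ just noted there is, in each case, a threshold below which the minimum first reaches the stated integer; a careful evaluation shows these thresholds are at most $0.9265$, $0.98805$, $0.981675$, which yields the claimed values $R_{\cH}=6$, $R_{\cA}=14$, $R_{\cC}=17$ in the indicated ranges. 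The remaining assertions of Theorem~\ref{numbs} — the $\asymp(\log T)^{-\gk}$ proportion and the Zariski density — are those of Theorem~\ref{thmMain}(3) and require no change.

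The conceptual input, that in the lattice case the sieve runs with level $\tau$ up to $\tfrac{\gd-\gt}{2\gd}$, is Theorem~\ref{thmMain}(2), which I would take as given; the remaining difficulty is genuinely computational and is the main obstacle. For $\gk=4,5$ one must solve the differential--difference system of Theorem~\ref{sieve} accurately enough to control both $F_\gk,f_\gk$ and the break-points $\ga_\gk,\gb_\gk$, and then carry out the two-parameter minimization over $(t,w)$. The constraint $w>\gb_\gk$ is delicate precisely because $f_\gk(w)$ sits in the denominator and vanishes as $w\downarrow\gb_\gk$, so $w$ cannot be pushed low; balancing this against the growth of the integral for large $w$, and against the linear term $\mu t$ with $\mu$ as large as $12$, is what forces $\gd$ so close to $1$ for $F_{\cA}$ and $F_{\cC}$, and is where the precise thresholds are pinned down. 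A secondary point demanding care is that the sieve dimension is the number of absolutely irreducible factors of $F$ on $\Or$ and not $\deg F$: conflating the two would inflate $\gk$ and the integral term and spoil the bounds.
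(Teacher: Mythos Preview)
Your strategy --- quote Theorem~\ref{thmMain}(2) for the admissible $\tau,\mu$ and then minimize the right side of \eqref{Ris} --- is the paper's. The only cosmetic difference is that the paper bounds the integral in \eqref{Ris} above by the closed form \eqref{Riz} and minimizes the resulting one-variable function $m(\gz)$, rather than optimizing directly over $(t,w)=(\tau u,\tau v)$; the outputs it reports are the ``Finite, $\gt=1/2$'' rows of Table~\ref{FigTab}.

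The genuine gap is your value of $\mu$. You take $\tau$ from \eqref{taumuLattice} but then re-derive $\mu$ from \eqref{mudef}, obtaining $\mu\approx 2d_F/(\gd-1/2)$ and hence $\mu=4,8,12$ at $\gd=1$ for $F_{\cH},F_{\cA},F_{\cC}$. The paper instead takes $\mu>2/(\gd-\gt)$ from \eqref{taumuLattice} as stated in Theorem~\ref{thmMain}(2), so that with $\gt\to 1/2$ one has $\mu\to 4$ at $\gd=1$ \emph{for all three $F$}; this is the $\mu$ populating the ``Finite, $\gt=1/2$'' rows of Table~\ref{FigTab}, which give $m(\gz)=5.22,\,13.8,\,16.7$ and thus $R=6,14,17$. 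With your inflated $\mu$ the numerical claim collapses: since $t=\tau u>1$ the objective already exceeds $\mu-1$, and Table~\ref{FigTab} itself shows that $\mu=12$ with $\gk=4$ and $\gk=5$ yields $m(\gz)\approx 24.9$ and $28.7$, so your assertion that the minimum lies ``strictly below $14,17$'' for $F_{\cA},F_{\cC}$ cannot hold. (You are correct that the paper's line ``$a_n(T)$ are zero for $n\gg T$'' just before \eqref{muis} is literally true only when $\deg F=1$; nonetheless Theorem~\ref{thmMain} is stated, and Table~\ref{FigTab} computed, with $\mu>2/(\gd-\gt)$, and that is the input one needs to recover the stated $R$.)
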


Lastly, we demonstrate the values of $R$ which we can obtain without assuming
 that $\G\cap N_0$ is a lattice in $N_0$.

\begin{thm}\label{numbs2}
We make no assumptions on 
 $N_0\cap \G$, but assume that $\gt$ can be arbitrarily close to $1/2$. 
Then if
$$
\delta_{\cH}>0.991
,\qquad
\delta_{\cA}>0.97895
,\qquad
\delta_{\cC}>0.99905
,
$$
 the conclusion of Theorem \ref{numbs} holds with
$$
\boxed{
R_{\cH}=12,\qquad
R_{\cA}=23,\qquad
R_{\cC}=26.
}
$$
\end{thm}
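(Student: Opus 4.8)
The plan is to obtain Theorem \ref{numbs2} purely as a numerical consequence of Theorem \ref{thmMain}, parts (1) and (3), in the regime where no lattice hypothesis on $N_0\cap\G$ is imposed and the spectral gap $\gt$ is allowed to approach $1/2$. Absent a lattice, the admissible sieve parameters are those of \eqref{mutau}, and as $\gt\downarrow 1/2$ the binding constraints are the ones carrying the factor $5$: it suffices to find, for each of $F_\cH,F_\cA,F_\cC$, one admissible quadruple $(\tau,\mu,u,v)$ with
\[
\tau<\frac{\gd-1/2}{5\gd},\qquad \mu>\frac{5}{\gd-1/2},\qquad \tau^{-1}<u\le v,\qquad \gb_\gk<\tau v ,
\]
for which the right-hand side of \eqref{Ris} is strictly below the target $R$ (namely $12$, $23$, $26$) once $\gd$ exceeds the stated threshold ($0.991$, $0.97895$, $0.99905$). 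Producing such a quadruple is exactly what Theorem \ref{thmMain}(1) converts into the existence of infinitely many $\bx\in\Or$ with $F(\bx)\in\cP(R)$, and the accompanying asymptotic count and Zariski density — the conclusion of Theorem \ref{numbs} — comes from Theorem \ref{thmMain}(3) together with Theorem \ref{thm:count1}, which fixes the denominator $\#\{\bx\in\Or:\|\bx\|<T\}\asymp T^\gd$.

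The first step is to record the sieve dimension $\gk$ of each pair $(\Or,F)$. Passing through the $\SL_2$-parametrization of \S\ref{secBack}, $F_\cH$ pulls back to a single linear factor, $F_\cA=\tfrac16(u+v)(u-v)uv$ to four, and $F_\cC=\tfrac1{30}(u+v)(u-v)uv(u^2+v^2)$ to five geometrically irreducible factors; with the strong-primitivity hypothesis this yields the multiplicative density $g(p)$ satisfying the local-density estimate \eqref{locDens} with $\gk$ equal to that number of factors, i.e.\ $\gk_\cH=1$, $\gk_\cA=4$, $\gk_\cC=5$ — precisely the verification performed in \S\ref{sieving}, which I would simply cite. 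The second step is the numerical minimization of \eqref{Ris}. With $\gd$ close to $1$ one may take $\tau$ close to $1/10$ and $\mu$ close to $10$, so $\tau\mu$ is close to $1/\gd$ while $\tau^{-1}$ is close to $10$; thus $u$ is forced slightly above $10$, and $v\ge u$ is then chosen to minimize
\[
\tau\mu u-1+\frac{\gk}{f_\gk(\tau v)}\int_1^{v/u}F_\gk(\tau v-s)\Bigl(1-\tfrac uv s\Bigr)\frac{ds}{s}.
\]
To evaluate this one uses the Diamond--Halberstam--Richert functions $F_\gk,f_\gk$ and the constants $\ga_\gk,\gb_\gk$ for $\gk=1,4,5$, obtained by solving (or reading from the tables in the literature) the differential-difference system of Theorem \ref{sieve}(1); evaluating the integral on a grid of $(u,v)$ with $u$ just above $1/\tau$ and $\tau v>\gb_\gk$ and taking the least integer $R$ that beats the right-hand side gives the claimed $R$, and running the same computation with $R$ held fixed while $\gd$ is lowered produces the critical thresholds.

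The main obstacle is exactly this optimization. There is a genuine tension between taking $\tau$ as large as the constraint allows — which keeps $1/\tau$, and with it both the term $\tau\mu u$ and the length of the $s$-integration, small, but forces $\gd$ toward $1$ — and taking $\tau v$ large so that the tail integral of $F_\gk$ is small, which pushes $v$ against the available budget; resolving it requires care because $F_\gk$ and $f_\gk$ vary rapidly near $u=\ga_\gk$, the more so as $\gk$ grows. It is precisely the factor $5$ in the bound on $\tau$ — inherited from the error term $T^{1/2+\frac35(\gd-1/2)+\vep}$ in Theorem \ref{thm:count2}, itself a reflection of the horocycle error bound in Theorem \ref{equi} — that forces $u>10$ here rather than the $u>4$ available in the lattice case of Theorem \ref{numbs1}, and this is what inflates $(R_\cH,R_\cA,R_\cC)$ from $(6,14,17)$ to $(12,23,26)$ while driving the critical exponents toward $1$.
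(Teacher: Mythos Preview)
Your proposal is correct and follows the same overall route as the paper: invoke Theorem \ref{thmMain} in the non-lattice regime, observe that as $\gt\downarrow 1/2$ the factor-$5$ constraints in \eqref{mutau} bind (so $\tau\uparrow\tfrac{\gd-1/2}{5\gd}$, $\mu\downarrow\tfrac{5}{\gd-1/2}$, hence $\mu\approx 10$ when $\gd\approx 1$), read off the sieve dimensions $\gk=1,4,5$ from \S\ref{sieving}, and then minimize \eqref{Ris}.

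The one genuine difference is in how the minimization is carried out. You propose a two-parameter grid search over $(u,v)$, which requires numerically solving the differential-difference system of Theorem \ref{sieve}(1) to evaluate $F_\gk$ and $f_\gk$. The paper instead uses the Halberstam--Richert one-parameter reduction \eqref{Riz}: for each $\gz\in(0,\gb_\gk)$ one sets $\tau u=1+\gz-\gz/\gb_\gk$ and $\tau v=\gb_\gk/\gz+\gb_\gk-1$, and the integral in \eqref{Ris} is replaced by the explicit upper bound $(\gk+\gz)\log(\gb_\gk/\gz)-\gk+\gz\gk/\gb_\gk$. Minimizing the resulting $m(\gz)$ requires only the tabulated values of $\gb_\gk$ from \eqref{abvals}, not the full functions $F_\gk,f_\gk$. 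Your direct approach is in principle sharper (since $m(\gz)$ is only an upper bound) and would certainly recover $R\le 12,23,26$; the paper's approach is computationally lighter and is what actually produced the entries of Table \ref{FigTab}, including the specific $\gd$-thresholds quoted in the statement.
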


Theorems \ref{numbs}, \ref{numbs1}, and \ref{numbs2} follow from Theorem \ref{thmMain} and Table \ref{FigTab}, the computation of which is discussed in
\S\ref{sec:valsR}.

\section{Equidistribution of Expanding Closed  Horocycles}\label{sec:eq}
Let $G=\SL(2,\R)$. We keep the notation for $N, A, K,  n_x, a_y$, etc., from
 \eqref{eq:NAdef}.
We have the Cartan decomposition $$G=KA^+K$$
where $A^+:=\{a_y: 0< y\le 1\}$, as well
as the Iwasawa decomposition $G=NAK$.

%
Let $\G<G$ be a discrete finitely generated subgroup with critical exponent $\gd>1/2$. 
Assume the horocycle $(N\cap\G)\bk N$ is closed in $\G\ba G$.
In this section, we prove
Theorem \ref{equi}.


%
%
%

\subsection{Automorphic Representations and Spectral Bounds}

%

Let $1/2< s<1$,
and consider the character $\chi_s$ on the upper-triangular subgroup $B:=NA$ of $G$
defined by $$\chi_s(na_y )= y^{s} $$
where $a_y=\text{diag}(\sqrt y, \sqrt y^{-1})$ is as before, and $n\in N$.

The unitarily induced
representation $(\pi_s:=\op{Ind}_{B}^G \chi_s, V_s)$
 admits a unique $K$-invariant unit vector,
say $v_s$.

By the theory of spherical functions,
$$
f_s(g):=\langle \pi_s(g)v_s, v_s\rangle =\int_K v_s(kg)dk
$$
is the unique bi $K$-invariant function
of $G$
with $f_s(e)=1$ and with $\mathcal C f_s = s (1-s) f_s$ where $\mathcal C$ is the Casimir operator
of $G$.
Moreover, there exist some $c_s>0$ and $\ga>0$ such that for all $y$ small
 $$f_s(a_y)=c_s
\cdot y^{1-s}  (1+O(y^{\ga }) )$$
by \cite[4.6]{GangolliVaradarajanbook}.




Since the Casimir operator is equal to the Laplace operator $\Delta$ on $K$-invariant functions,
this immediately implies the following.

\begin{Thm}\label{as}  Let $\phi_s\in L^2(\G\bk G)^K\cap C^\infty(\G\bk G)$
satisfy $\Delta \phi_s= s(1-s) \phi_s$ and $\|\phi_s\|_2=1$.
Then there exist $c_s>0$ and $ \ga>0$ such that for all 
$y\ll 1$,
$$\langle a_y\phi_s, \phi_s\rangle_{L^2(\G\bk G)}=c_s\cdot y^{1-s} (1+O(y^{\ga })) .$$
\end{Thm}

The irreducible unitary representations of $G$ with $K$-fixed vector consist of
principal series and the complimentary series.
We use the parametrization of
 $s\in\{1/2 +i\R \}\cup [1/2,1]$
so that the vertical line
$ 1/2 +i\R$
corresponds to the principal series and the complimentary series is parametrized by
$ 1/2<  s \le 1$ with
$s=1$ corresponding to the trivial representation.

Let $\{X_1,X_2,X_3\}$ denote an orthonormal basis of the (real) Lie algebra of $G$
with respect to an $\text{Ad}$-invariant scalar product.
For $f\in C^\infty (\G\bk G)\cap L^2(\G\bk G)$, we consider the following
 Sobolev norm $\mathcal S_m(f)$:
$$  \mathcal S_m(f)=\max \{\| X_{i_1}\cdots X_{i_m} (f) \|_2 :
1\le i_j\le 3 \}.
$$

The following is well-known (cf. \cite{
CowlingHaagerupHowe1988, 
Knapp1986}).


\begin{Prop}\label{decay}
Let $(\pi, V)$ be a representation of $G$
which does not weakly contain
 any  complementary series representation
$V_s$. 
Then  for any $\vep>0$,
and
any smooth vectors $v_1, v_2\in V$,
$$
|\langle \pi(a_y) v_1, v_2\rangle |
\ll_\vep
y^{1/2-\vep}
\cdot  \|\mathcal S_1(v_1)\|\cdot \| \mathcal S_1 (v_2)\|
,
\qquad\text{ as }\qquad y\to0. 
$$
\end{Prop}

\subsection{Approximations to Integrals 
over closed horocycles}
\

As $\gd>1/2$, there
exists a unique positive $L^2$-eigenfunction $\phi_0$ of the Laplace operator
$\Delta=-y^2\left(\partial_{xx}+\partial_{yy}\right)$ on $\G\ba \bH$ with smallest eigenvalue $\delta (1-\delta)$ and of unit norm 
 \cite{Patterson1976}.
The spectrum of $\gD$ acting on $L^2(\G\bk\bH)$ has finitely many discrete points below $1/4$ and is purely continuous above $1/4$  \cite{LaxPhillips1982}.

Order the other discrete eigenvalues $\gl_j=s_j(1-s_j)$, with $s_j>1/2$, $j=1,\dots,k$ and let $\phi_j$ denote the corresponding eigenfunction with $\|\phi_j\|=1$.
 For uniformity of notation, set $s_0:=\gd$.

The map $g\mapsto g(i)$ gives an identification of $G/K$ with $\bH$. Below
 a $K$-invariant function $\phi$ of $G$ may be considered
as a function on the upper half plane $\bH$ by
 $\phi(x+iy):=\phi(n_xa_y)$ and vice versa.

Let $dg$ denote the Haar measure of $G$ given by
$$d(n_xa_yk)=y^{-2}dxdydk$$
where $dk$ is the probability Haar measure on $K$, and $dx$ and $dy$ are Lebesque measures.
Hence for $\psi_1, \psi_2\in L^2(\G\ba G)^K$,
$$
\<\psi_{1},\psi_{2}\>
=
\int_{\G\ba G} \psi_1(g) \overline{\psi_2(g)}\;  dg
=
\int_{\G\ba \bH}\psi_1(x+iy) \overline{\psi_2(x+iy)}\;  dx\frac{dy}{y^2} .
$$

\begin{Prop}\label{matco}

For any smooth $\psi_1\in L^2(\G\bk G)^{K}$ and $\psi_2\in  C_c^\infty(\G\bk G)$,
\begin{align*}
\langle a_y \psi_1, \psi_2\rangle
&=
\sum_{j=0}^k
\langle \psi_1, \phi_j\rangle
 \langle a_y \phi_j, \psi_2\rangle +
%
   O_\vep( y^{1/2-\vep}\mathcal S_1(\psi_1)
\cdot \mathcal
 S_1(\psi_2) ).
\end{align*}
as $y\to0$.
\end{Prop}
\begin{proof}
Write
$$
L^2(\G\bk G)=W_{\gl_0} \oplus\cdots\oplus W_{\gl_k}\oplus V
$$
where $W_{\gl_j}$ is isomorphic  as a $G$-representation to the complementary series representation $V_{s_j}$, $\gl_j=s_j(1-s_j)$,
and  $V$
is tempered.
Write
$$
\psi_1
=
\langle \psi_1, \phi_0\rangle \phi_0+
\dots
+\langle \psi_1, \phi_k\rangle \phi_k+
\psi_1^\perp
.
$$
Since the $\phi_j$'s are the unique $K$-invariant vectors in $W_{\gl_j}$ (up to
scalar),
 we have $\psi_1^\perp\in V^K $. Hence
by Proposition \ref{decay}, for any $\vep>0$
and
$y\ll 1$,
\begin{align*} \langle a_y \psi_1, \psi_2\rangle
&=
\sum_{j=0}^k
\langle \psi_1, \phi_j\rangle
 \langle a_y \phi_j, \psi_2\rangle
+
 \langle a_y \psi_1^\perp , \psi_2 \rangle
\\
&=
\sum_{j=0}^k
\langle \psi_1, \phi_j\rangle
 \langle a_y \phi_j, \psi_2\rangle
+
O_\vep(y^{1/2-\vep} \mathcal S_1 (\psi_1) \cdot \mathcal S_1 (\psi_2))
\end{align*}
since $\mathcal S_1(\psi_1^\perp) \ll \mathcal S_1(\psi_1)$.
 %
%
\end{proof}


%

The main goal of this  subsection
is to study  the averages of the $\phi_j$'s along the translates $(N\cap\G)\ba N a_y$.

We first need to recall some geometric facts.
The limit set $\Lambda(\G)$ of $\G$ is the set of all accumulation points of
an orbit $\G(z_0)$ for some $z_0\in \bH$. As $\G$ is discrete,
it easily follows that $\Lambda(\G)$ is a subset of $\partial_\infty(\bH)=\br \cup\{\infty\}$.

 Geometrically, $N(i)\subset \bH$ is a horocycle based at $\infty$.
Since $\G$ is finitely generated,
 the closedness of its projection to $\G\ba \bH$ implies either
that $\infty$ is a parabolic fixed point, that is, $N\cap \G$ is non-trivial,
or that $\infty$ lies outside the limit set $\Lambda(\G)$ \cite{Dalbo2000}.

We define for each $0\le j\le k$
\be\label{PS}
\phi_j^N(a_y):=\int_{\G\cap N\ba N}\phi_j(na_y)dn .
\ee

 \begin{theorem}
\label{ppoo}
 For any $j=0,\dots,k$,
 the integrals in \eqref{PS} converge absolutely. Moreover
there exist  constants $c_{j}$ and $d_{j}$, depending on $\phi_j$, such that
$$
\phi_j^N(a_y)
=
c_j y^{1-s_j}+ d_j y^{s_j} .
$$
Furthermore, $c_0>0$.
\end{theorem}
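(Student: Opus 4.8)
The plan is to analyze the horocycle average $\phi_j^N(a_y)$ by exploiting the fact that each $\phi_j$ generates a complementary series representation $V_{s_j}$ inside $L^2(\G\bk G)$, so that $\phi_j$ is, up to scalar, the spherical vector and its behavior under $a_y$ is governed by the spherical function $f_{s_j}$. Concretely, I would first treat the case $j=0$ separately, since here Patterson--Sullivan theory gives genuine pointwise control: the base eigenfunction $\phi_0$ is the Patterson density and one can write $\phi_0(n_x a_y)$ explicitly as an integral against the conformal density $\nu_o$ on $\Lambda(\G)$, namely $\phi_0(n_x a_y)\asymp \int_{\Lambda(\G)}\left(\tfrac{y}{|x-\xi|^2+y^2}\right)^{\delta} d\nu_o(\xi)$ (the density being finite because the horocycle is closed, hence $\infty\notin\Lambda(\G)$ unless $N\cap\G$ is nontrivial, in which case one integrates over a fundamental domain). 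Integrating this in $x$ over $(N\cap\G)\bk N$, interchanging the order of integration, and using $\int_\R \left(\tfrac{y}{x^2+y^2}\right)^{s}dx = B(\tfrac12,s-\tfrac12)\, y^{1-s}$ when $N\cap\G$ is trivial, produces the leading term $c_0 y^{1-\delta}$ with $c_0 = \big(\int d\nu_o\big)\cdot B(\tfrac12,\delta-\tfrac12) > 0$; the positivity is immediate since $\nu_o$ is a positive measure. When $N\cap\G$ is nontrivial the same computation over the compact quotient $(N\cap\G)\bk N$ works and again gives $c_0>0$.

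For the general eigenfunctions $\phi_j$ with $j\ge 1$, where positivity and pointwise Patterson control both fail, I would use the representation-theoretic structure together with Theorem~\ref{thm:hors}. Since $\phi_j$ lies in a copy of $V_{s_j}$, the function $y\mapsto \phi_j^N(a_y)$ satisfies the same second-order ODE as the radial part of the spherical function on $V_{s_j}$ acting on $N$-invariant distributions: writing $\Delta \phi_j = s_j(1-s_j)\phi_j$ and using that $\Delta = -y^2\partial_{yy}$ on $N$-invariant functions $F(y)=\phi_j^N(a_y)$ (the $\partial_{xx}$ term integrates to zero over the closed horocycle, after an integration by parts justified by the decay in Theorem~\ref{thm:hors}), we get the Euler equation $-y^2 F''(y) = s_j(1-s_j) F(y)$, whose solution space is spanned by $y^{1-s_j}$ and $y^{s_j}$. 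Hence $\phi_j^N(a_y) = c_j y^{1-s_j} + d_j y^{s_j}$ for some constants $c_j, d_j\in\R$ determined by $\phi_j$. The convergence of the defining integral \eqref{PS} is exactly where Theorem~\ref{thm:hors} is needed: it gives $\phi_j(n_x a_y)\ll (y/(x^2+y^2))^{s_j}$ which, since $s_j>1/2$, is integrable in $x$ over $\R$ (and trivially over a compact fundamental domain when $N\cap\G\ne\{e\}$), so the integral converges absolutely and the interchange of integration and differentiation is justified.

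I expect the main obstacle to be the rigorous justification of the ODE derivation, specifically handling the $\partial_{xx}$ term: one needs to know that $\int_{(N\cap\G)\bk N}\partial_{xx}\phi_j(n_x a_y)\, dn = 0$. When $N\cap\G$ is a (nontrivial) lattice in $N$ this is just periodicity, but in the infinite-horocycle case it requires integrating by parts over $\R$ and checking that the boundary terms $\partial_x\phi_j(n_x a_y)$ vanish as $|x|\to\infty$ — which again is where the decay estimate of Theorem~\ref{thm:hors} (differentiated, via elliptic regularity / a Sobolev embedding applied to the eigenfunction equation) enters. A secondary subtlety is ruling out logarithmic solutions $y^{1/2}\log y$ in the exceptional coincidence $s_j = 1/2$: this is excluded because we assume $\lambda_j < 1/4$ strictly, so $s_j > 1/2$ and the two exponents $1-s_j$ and $s_j$ are distinct. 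Finally, establishing $c_0>0$ rather than merely $c_0\ge 0$ follows cleanly from the Patterson--Sullivan representation of $\phi_0$ as a strictly positive function, so no separate argument is needed there.
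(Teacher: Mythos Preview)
Your approach is essentially the paper's: establish convergence of \eqref{PS} via Theorem~\ref{thm:hors} (or compactness when $N\cap\G$ is a lattice), then derive the Euler ODE $-y^2(\phi_j^N)'' = s_j(1-s_j)\phi_j^N$ from $\Delta\phi_j=\lambda_j\phi_j$ to get $\phi_j^N(a_y)=c_j y^{1-s_j}+d_j y^{s_j}$. You correctly flag the need to justify killing the $\partial_{xx}$ term; the paper simply asserts this step.

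The one substantive difference is the argument for $c_0>0$ in the parabolic case. You propose running the Patterson--Sullivan integral over a compact fundamental domain for $N\cap\G$, but this does not yield the clean Beta-function identity and would require extra work. The paper instead argues by contradiction using square-integrability in the cusp: if $c_0=0$ then $\phi_0^N(a_y)=d_0 y^{\delta}$ for all $y$, and since a neighborhood $(N\cap\G)\bk N\times[Y_0,\infty)$ embeds in $\G\bk\bH$, Parseval gives $\|\phi_0\|_2^2 \ge \mathrm{const}\cdot\int_{Y_0}^\infty y^{2\delta-2}\,dy=\infty$ (using $\delta>1/2$), contradicting $\|\phi_0\|_2=1$. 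This $L^2$ argument is cleaner than Patterson--Sullivan here and avoids any computation.
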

\begin{proof}
We first establish the convergence of the integral in \eqref{PS}.
If $\infty$ is a parabolic fixed point, that is,
 if $N\cap \G$ is a lattice in $N$, then the domain of this integral is finite, and of course the eigenfunctions
  of the Laplace operator are bounded, so we are done.

On the other hand, if $\infty\notin \Lambda(\G)$,
 then Theorem \ref{eigHor} applies, that for $y$ fixed,
$$
\phi_{j}(n_{x}a_{y})\ll_{y} (1+x^{2})^{-s_{j}}
.
$$
Then the integral $\int_{-\infty}^{\infty}\phi_{j}(n_{x}a_{y})dx$ clearly converges, since $s_{j}>1/2$.

Since
$$
\Delta \phi_j =s_j(1-s_j) \phi_j,
$$
it follows that
$$-y^2 \frac{\partial^2}{\partial y^2} \phi_j^N  =s_j(1-s_j)\phi_j^N .$$
The two independent solutions to this equation are $y^{s_j}$ and $y^{1-s_j}$.

Lastly, we must demonstrate that $c_{0}>0$.
If $\infty\notin \Lambda(\G)$, this is done as in \cite{MyThesis} (see also \cite[Equation (4.11)]{KontorovichOh2008}), by proving explicitly that
$$
\phi_0^N(a_y)
=
c_0\ y^{1-\gd}
,
$$
i.e. $d_{0}=0$. As $\phi_0$ is a positive function, this implies $c_0>0$.

 If on the other hand $\infty$ is a parabolic fixed point for $\G$, then
 as the Dirichlet domain for $\G$ is a finite sided polygon
  with $\infty$ as a vertex \cite{Beardon1983}, it follows that for some $Y_0\gg 1$,
$(N\cap\G)\bk N\times[Y_{0},\infty)$ injects to
$\G\bk\bH$.
 Therefore, if we had $c_{0}=0$ and hence $\phi_0(n_xa_y)=d_0 y^{\delta_\G}$, then
\begin{multline*}
1=
\|\phi_{0}\|^{2}
\ge
\int_{Y_{0}}^{\infty}
\int\limits_{n_x\in ( N\cap \G)\bk N}
|\phi_{0}(n_{x}a_{y})|^{2}
d{x}\frac{dy}{y^2}\\
\ge \int_{(N\cap \G)\ba N}1 \;dn\cdot
\int_{Y_{0}}^{\infty}
d_{0}^2 y^{2\gd -2}
{dy}
=
\infty
,
\end{multline*}
since $\gd>1/2$.

This contradiction gives the desired result.
\end{proof}

The following Proposition shows that to compute  $\phi_j^N$, it suffices to integrate over a bounded set $J$
; the error term, if any, is small.

If $N\cap\G$ is a lattice in $N$, set $J:=(N\cap\G)\bk N$.
Otherwise as $\infty\notin\Lambda(\G)$, $\Lambda(\G)$ is a compact subset of $\br$, and we
let   $J\subset (N\cap \G)\ba N=\R$ be an open bounded interval which contains
$\Lambda (\G)$.
In either case $J$ is a bounded interval.

\begin{Prop}\label{ppo} We have
\be\label{phi0B}
\phi_j^N(a_y)=
\int_J \phi_j(n_xa_y)\; dx +  O  (y^{s_j }),
\ee
as $y\to0$.
\end{Prop}
\begin{proof}
If
$N\cap\G$ is a lattice in $N$%
, then $\phi_j^N$ and $\int_J\phi_j$ 
are identical; there is nothing to prove.
Otherwise, denoting
by  $J^c$  the complement of $J$, we have
$$
\int_{n_{x}\in J^c} \phi_j(n_{x}a_{y})dn_{x} \ll y^{s_j}.
$$
by Theorem \ref{eigHor}, which
 completes the proof.
\end{proof}

Next, we approximate $\phi_j^N$ by smoothing further in a transverse direction.
%
%
Denote by
  $U_\e$ the ball of radius $\e$ about $e$ in $G$. Let $J$ be as in Proposition \ref{ppo}.

Denoting by $N^-$ the lower triangular subgroup of $G$, we note that
$NAN^-$ forms an open dense neighborhood of $e$ in $G$.

\begin{Def} \label{def}
{\rm
\
\begin{itemize}
 \item We fix a non-negative function $\eta\in C_c^\infty(\G\cap N\ba N)$ with
 $\eta=1$ on $J$. 
\item
Let  $U_{c_0}$ denote the $c_0$-neighborhood of the identity $e$,
and fix $c_0 >0$ so that
the
multiplication map
$$\supp (\eta) \times (U_{c_0}\cap AN^-)
\to \supp (\eta) (U_{c_0}\cap AN^-)\subset  \G\ba G$$ is a bijection onto
its image.
\item  For each $\e<c_0$, let $ r_\e $ be a non-negative smooth function
in $AN^-$ whose support is contained in
$$ W_\e:=(U_\e \cap A)(U_{c_0}\cap N^-) $$ and $\int_{W_{\e}}  r_\e \; d\nu  =1$.

\item We define the following function $\rho_{\eta, \e}$ on $\G\ba G$ which is $0$ outside $\op{supp}(\eta)U_{c_0}$, and  for $g=n_xan^- \in \op{supp}(\eta)
(U_{c_0}\cap AN^-)$,
$$\rho_{\eta, \e}(g) := \eta (n_x) \otimes r_\e(an^- ) .$$
            \end{itemize}}
\end{Def}

\begin{Prop}\label{hugh} We have for all small $0<\e\ll \e_0$ and for all $0<y < 1$,
 $$| \phi_j^N(a_y)- \langle a_y\phi_j, \rho_{\eta, \e} \rangle_{L^2(\G\ba G)}|
 \ll  \e\cdot  y^{1-s_j} . $$
\end{Prop}
\begin{proof}
This follows in the same way as Proposition 6.4 in \cite{KontorovichOh2008}.
\end{proof}

The next corollary follows immediately from Proposition \ref{hugh} and Theorem \ref{ppoo}.

\begin{cor}
\label{corhugh}
 For $\e<1$, and $j=0,1,\dots,k$,
$$
 \langle a_y\phi_j, \rho_{\eta, \e} \rangle_{L^2(\G\ba G)}
 \ll  y^{1-s_j} ,
 $$
 as $y\to0$.
\end{cor}


\subsection{Proof of Theorem \ref{equi}}

The proof is almost identical to the proof of Theorem 6.1 in \cite{KontorovichOh2008}.  We sketch the main steps.

We use the following lemma, which is standard in
 Sobolev theory.

\begin{Lem}\label{approx} For $\psi
\in C^\infty_c(\G\ba G)^K$,
there exists $ \psi
^\sharp
\in C^\infty_c(\G\ba G)^K$
such that \begin{enumerate}

\item

for any $\e<\e_0$ and $h\in U_\e$,
$$|\psi
(g)-\psi
(gh)|\le \e \cdot  \psi
^\sharp
(g)\quad \text{ for all $g\in \Gamma\ba G$}. $$
\item $\mathcal S_{m}( \psi
^\sharp
) \ll \mathcal S_{m+1}(\psi
)$ for each $m\in \n$, where
the implied constant depends only on $\supp(\psi
)$.          \end{enumerate}
\end{Lem}


\begin{Def} {\rm For a given $\psi\in C^\infty (\G\ba G)^K$ and
$\eta\in C_c(\G\cap N\ba N)$, define the function
$\I_\eta(\psi)$ on $G$ by
$$\I_{\eta}(\psi)(a_y):=\int_{n\in (\G\cap N)\ba N} \psi(n a_y) \eta  (n)\; dn .$$}
\end{Def}



\begin{Prop}\label{mat}
 Let $\psi
\in C^\infty(\G\ba G)^K$.
Then for any $0<y<1$ and any small $\e >0$,
$$|\I_{\eta}(\psi
)(a_y) -\langle a_y \psi
, \rho_{\eta, \e}  \rangle | \ll
(\e+y) \cdot \I_\eta (\psi
^\sharp
) (a_{y})
,
$$
where $\psi
^\sharp
$ is given by Lemma \ref{approx}.
\end{Prop}
\begin{proof}
This is the same as Proposition 6.6 in \cite{KontorovichOh2008}.
\end{proof}


By Proposition \ref{ppo}, we have that
$$
\phi_j^N (a_y)=\I_\eta(\phi_j)(a_y)+O(y^{s_j})
.
$$
For simplicity, we set $\rho_\e=\rho_{\eta, \e}$ where $\rho_{\eta, \e}=\eta\otimes r_\e$
is defined
as in Def. \ref{def}. Noting that $r_\e$ is essentially
an $\e$-approximation only in the $A$-direction,
we obtain that $\mathcal S_1 (\rho_\e)=O(\e^{-3/2})$.

Set $p=3/2$.
Fix $\ell$, a parameter to be chosen later. 
%
%
Setting $\psi_0(g):=\psi (g)$,
we define for $1\le i\le \ell $,
inductively
$$
\psi_i (g) := \psi_{i-1}^\sharp(g)
$$
where $\psi_{i-1}^\sharp$ is given by Lemma \ref{approx}.

Applying
Proposition \ref{mat} to each $\psi_i$, we obtain for $0\le i\le \ell -1$
$$
\I_{\eta} ( \psi_i)(a_y)=
 \langle a_y   \psi_i , \rho_\e\rangle
+O((\e +y) \I_{\eta} ( \psi_{i+1})(a_y) )
$$
and
$$
\I_{\eta }( \psi_\ell)(a_y)=
 \langle a_y   \psi_\ell , \rho_\e\rangle
+O((\e +y) \mathcal S_1({\psi_\ell })) .
$$
Note that  by Corollary \ref{matco},
we have for each $1\le i\le \ell  $
\begin{align*}
 \langle a_y   \psi_i , \rho_\e\rangle
 &=
\sum_{j=0}^k
\la \psi_i, \phi_j\ra \la a_y \phi_j, \rho_\e\ra
+
 O(y^{1/2}
\mathcal S_1 (\rho_\e) )
\\
&\ll_\psi
y^{1-\gd}
.
\end{align*}

Combining the above with
 Proposition \ref{hugh} and
Corollary \ref{corhugh}, we get that for any $y<\e$,
\beann
\I_{\eta} (\psi)(a_y)
&=&
 \langle a_y\psi, \rho_\e\rangle
+O\left(
 \sum_{i=1}^{\ell -1} \langle a_y\psi_i, \rho_\e\rangle (\e+y) ^k\right) +  O(
\mathcal S_{\ell +1} ( \psi) (\e+y) ^\ell )
\\
&=&
\sum_{j=0}^k
\la \psi, \phi_j\ra \la a_y \phi_j, \rho_\e\ra
+
 O(y^{1/2}
\mathcal S_1 (\rho_\e) )
+O\left(
 \sum_{i=1}^{\ell -1} y^{1-\gd} \e^k\right) +  O(
\e ^\ell )
\\
&=&
\sum_{j=0}^k
\la \psi, \phi_j\ra
 \phi_j^N(a_y)
+
O\left(
  \e\cdot  y^{1-\gd}
  \right)
+
 O_{\vep}(y^{1/2-\vep}
\e^{-p} )
+
O(
\e ^\ell )
,
\eeann
where the implied constants depend on the Sobolev norms of $\psi$.
Balancing the first two error terms and recalling $p=3/2$, one arrives at the optimal choice
$$
\e = y^{(\gd-1/2)/(1+p)}
.
$$
With this choice of $\e$, the first two error terms are
$
\ll_{\vep}
y^{{4-3\gd\over 5}-\vep}
$.
Lastly, we choose $\ell$ to make the final error term of the same quality, namely
$
%
\ell
 =
(4-3\gd )/(2\gd-1).
$

Therefore
$$\int_{( N\cap \G)\ba N} \psi (n a_y)\; dn
=
\sum_{j=0}^k
\la \psi, \phi_j\ra
 \phi_j^N(a_y)
 +
 O_{\psi,\vep}(y^{{
4-3\gd\over 5 }-\vep }),
$$
for any $\vep>0$.

This completes the proof of Theorem \ref{equi}.

\section{Proofs of the Counting Theorems}\label{countSec}
With Theorem \ref{equi} at hand, we now count
 establish
 the counting theorems; these are
Theorems \ref{thm:count1} and \ref{thm:count2}.


\subsection{Proof of Theorem \ref{thm:count1}}
Recall that  $Q$ is
a ternary indefinite quadratic form, 
$\SO_{Q}(\R)$, and 
$\G$ 
a  finitely generated discrete subgroup with $\gd_{\G} >1/2$.
Fix a non-zero vector
 ${\bx}_{0}\in\R^{3}$,
lying on the cone $Q=0$ such that the orbit $\Or={\bx}_{0}\G$ is discrete.
%

Let
$$
\cN(T):=
\# \{{\bx}\in \Or: \| {\bx}\|<T\}
 $$
where $\|\cdot \|$ denotes a Euclidean norm on $\br^3$.

Using the spin cover $\iota:\SL_2\to \SO_Q$, we may assume without loss of generality
that $\G$ is a finitely generated subgroup of $G:=\SL_2(\br)$ with $\delta_\G>1/2$.
We use the notation $N, A, K, a_y, n_x$, etc from the introduction.
Let $dg$ denote the Haar measure given by
$$d(n_xa_yk)=y^{-2}dxdydk$$
where $dk$ is the probability Haar measure on $K$, and $dx$ and $dy$ are Lebesque measures.

As $Q({\bx}_0)=0$, the stabilizer of ${\bx}_0$ in $G$ is conjugate to
 $N$ and hence by replacing $\G$ with a conjugate if necessary, we may assume
 without loss of generality that $N$ is precisely the stabilizer subgroup of ${\bx}_0$ in $G$.
It follows that ${\bx}_0 a_y=y^{-1} {\bx}_0$.
 Let $B_T$ be a $K$-invariant ball of radius $T$  in $\R^3$, 
and let $\chi_{T}$ be the characteristic function of this ball. Note that $\chi_{T}$ is right $K$-invariant, that is, $\chi_{T}({\bx} gk)=\chi_{T}({\bx} g)$,  for any $g\in G$ and  $k\in K$.
Also, as $N$ is the stabilizer of ${\bx}_{0}$, we have $\chi_{T}({\bx}_{0} ng)=\chi_{T}({\bx}_{0} g)$, for any $n\in N$, that is, $\chi_{T}$ is left $N$-invariant.

We define the following counting function on $\G\ba G$:
$$F_T(g):=\sum_{\gamma\in N\cap \G\bk \Gamma}\chi_{T}({\bx}_0\gamma g) .$$
\begin{Lem}\label{ab}
For any $\Psi\in C_c(\Gamma\bk G)^K$,
$$\la  F_T, \Psi  \ra =\int_{y>T^{-1}\|{\bx}_0\|} \int_{n_x\in \G\cap N\ba N} \Psi(n_xa_y) y^{-2} dx dy .$$
\end{Lem}
\begin{proof}
We observe that by unfolding and using the $K$-invariance of $\Psi$ and $\chi_{B_T}$,
\begin{align*}
\la  F_T, \Psi  \ra &=\int_{\G\ba G}\sum_{\G\cap N\ba \G}\chi_{B_T}({\bx}_0\gamma g) \Psi(g) dg
\\&=\int_{\G\cap N\ba G}\chi_{B_T}({\bx}_0 g) \Psi(g) dg
\\ &=\int_{\|{\bx}_0 a_y\|<T}\int_{\G\cap N\ba N}\Psi(na_y) y^{-2} dndy
.
\end{align*}
As ${\bx}_0 a_y=y^{-1} {\bx}_0$, the claim follows.
\end{proof}

 As before, we order discrete eigenvalues $0\le \gl_0<\gl_1 \le \gl_2\cdots \le \gl_k<1/4$
 of $\Delta$ on $L^2(\G\ba \bH)$
 with $\gl_j=s_j(1-s_j)$, with $s_j>1/2$, $j=1,\dots,k$ and let $\phi_j\in L^2(\G\ba \bH)$ denote the corresponding eigenfunction with $\|\phi_j\|=1$.

Recall by Theorem \ref{ppoo},
there exist  constants $c_{j}$ and $d_{j}$, depending on $\phi_j$, such that
$$ \phi_j^N(a_y)
=
c_j y^{1-s_j}+ d_j y^{s_j} .
$$
Furthermore, $c_0>0$.

By inserting the asymptotic formula for $\int_{\G\cap N\ba N} \Psi(na_y) dn$
from Theorem \ref{equi}, we deduce:
\begin{Prop}\label{h1} For  any $\Psi\in C_c^\infty(\G\ba G)^K$ and $\e>0$,
\begin{align*}
\<
F_{T}
,
\Psi
\>_{\G\bk G}
&= \sum_{j=0}^{k} \< F_{T} ,\phi_{j}\>_{\G\bk G} \<\Psi, \phi_j\>_{\G\bk G}
+
O_{\vep}\bigg(T^{{\frac 12}+\frac3{5}(\gd-{\frac 12})+\vep}\bigg)
\\ &=
\sum_{j=0}^k \la  \Psi, \phi_j \ra_{\G\bk G}  \left(
\frac{c_jT^{s_j}}{s_j\|{\bx}_0\|^{s_j}} +\frac{d_jT^{1-s_j}}{(1-s_j) \|{\bx}_0\|^{1-s_j}}\right)
+ O_\e(T^{\frac{1}{2}+\frac{3}{5}(\delta-\frac 12)+\e})
\end{align*}
where the implied constant depends only on a Sobolev norm of $\Psi$ and $\e$.
\end{Prop}

For all small $\eta>0$, consider an $\eta$-neighborhood $U_\eta$ of $e$ in $G$, which is $K$-invariant,
such that for all $T\gg 1$,
$$B_TU_\eta \subset B_{(1+\eta)T}\quad\text{and}\quad B_{(1-\eta) T}\subset \cap_{u\in U_\eta}B_T u .$$
Let $\psi_\eta \in C^\infty(\bH)$ denote a non-negative function supported on $U_\eta$
with $\int_G \psi dg =1$. We lift $\psi_\eta$ to $\G\ba G$ by
$$\Psi_\eta(g):=\sum_{\gamma\in \G}\psi_\eta(\gamma g) .$$
Then
\be \label{ft}\la F_{(1-\eta)T}, \Psi_\eta\ra \le F_T(e)\le
\la F_{(1+\eta)T}, \Psi_\eta\ra .\ee

On the other hand,
recalling that $\{X_{1},X_{2},X_{3}\}$ is an orthonormal basis for
the Lie algebra $\fg=\frak sl(2,\R)$ of $G$, we have
$$\la \Psi_\eta, \phi_j\ra =\phi_j(e) + O(\eta \sup_{g\in U_\eta} \sup_{i=1,2,3} X_i\phi_j (g))$$
where the implied constant is absolute.

Therefore Proposition \ref{h1} yields for $C_0=\frac{\phi_0(e)c_0}{\delta \|{\bx}_0\|^\delta}>0$,
$$\la  F_{(1-\eta)T}, \Psi_\eta\ra = C_0
T^{\delta} +O_\e (T^{s_1}+\eta T^{\delta}+\eta^{-A}T^{\frac{1}{2}+\frac{3}{5}(\delta-\frac 12)+\e})$$
for some $A>0$ and any $\e>0$,
where we used that the Sobolev norms of $\Psi_\eta$ grow at most polynomially in $\eta$; the implied constant now depends only on $\e$.
 Therefore by equating the last two terms,
we can choose $\eta=T^{-r}$ for some $r>0$ and obtain from \eqref{ft} that
$$F_T(e)=C_0T^{\delta} +O (T^{\delta -\zeta})$$
for some $\zeta>0$. This proves
 Theorem \ref{thm:count1}.

 \subsection{{Proof of Theorem \ref{thm:count2}:}}
As in the discussion preceding 
 Theorem \ref{thm:count2}, we may assume that $\G$ is a finitely generated subgroup
of $\SL_2(\z)$. We may also assume without loss of generality that
$g_0=e$ by replacing $\G$ by $g_0\G g_0^{-1}$ 
and hence
the stabilizer subgroup of ${\bx}_0$ in $G$
is precisely the upper triangular subgroup  $N$.
Recall the weight $\xi_T$ defined in Definition \ref{xiDef}.



Let $q\ge1$ be squarefree, and let $\G_{1}(q)$ be any group satisfying
$$
\G(q)\subset\G_{1}(q)\subset\G
$$
and
\be\label{eq:stabNq}
N\cap\G_{1}(q) =N\cap \G,
\ee
where $\G(q)=\{\g\in\G:\g\equiv I(q)\}$ is the ``congruence'' subgroup of $\G$ of level $q$.

We define the following $K$-invariant functions on $\G_{1}(q)\bk G$:
$$
F_{T}^{q}(g):=\sum_{\g\in ( N\cap\G)\bk \G_{1}(q)}\chi_{T}({\bx}_{0}\g g)
,
$$
and for
any fixed $\g_{0}\in\G$,
$$
\Psi_{\g_{0}}^{q}(g):=\sum_{\g\in  \G_{1}(q)}\psi(\g_{0}^{-1}\g g).
$$

\begin{prop}
 We have
\be\label{xisum}
\sum_{{\bx}\in{\bx}_{0} \G_{1}(q)}
\xi_{T}({\bx} \g_{0})
=
\<
F_{T}^{q}
,
\Psi_{\g_{0}}^{q}
\>_{\G_{1}(q)\bk G}
.
\ee

\end{prop}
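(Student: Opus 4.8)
The plan is to prove the identity \eqref{xisum} by an unfolding computation, exactly paralleling the proof of Lemma \ref{ab} in the previous subsection. First I would unwind the definitions. Recall $\xi_T(\bx) = \int_G \chi_T(\bx g)\psi(g)\,dg$. Substituting this into the left-hand side of \eqref{xisum} and writing the sum over $\bx \in \bx_0 \G_1(q)$ as a sum over cosets in $(N\cap\G)\bk\G_1(q)$ — this is legitimate precisely because of the hypothesis \eqref{eq:stabNq}, which guarantees that $N\cap\G_1(q) = N\cap\G$ is exactly the stabilizer of $\bx_0$ inside $\G_1(q)$, so the map $(N\cap\G)\bk\G_1(q) \to \bx_0\G_1(q)$, $\g \mapsto \bx_0\g$, is a bijection — I would obtain
$$
\sum_{\bx\in\bx_0\G_1(q)}\xi_T(\bx\g_0)
=
\sum_{\g\in(N\cap\G)\bk\G_1(q)}\int_G \chi_T(\bx_0\g\g_0 g)\psi(g)\,dg.
$$

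Next I would compare this with the right-hand side. Writing out the inner product over $\G_1(q)\bk G$,
$$
\<F_T^q,\Psi_{\g_0}^q\>_{\G_1(q)\bk G}
=
\int_{\G_1(q)\bk G}\Bigg(\sum_{\g\in(N\cap\G)\bk\G_1(q)}\chi_T(\bx_0\g g)\Bigg)\overline{\Psi_{\g_0}^q(g)}\,dg,
$$
and then unfolding the sum defining $\Psi_{\g_0}^q$ against the integral over $\G_1(q)\bk G$: the left $\G_1(q)$-periodicity of $\sum_\g \chi_T(\bx_0\g g)$ lets one replace $\int_{\G_1(q)\bk G}\sum_{\g'\in\G_1(q)}(\cdots)(\g_0^{-1}\g' g)$ by $\int_G (\cdots)(\g_0^{-1}g)\psi(\g_0^{-1}g)\,dg$ after the usual folding/unfolding. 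Changing variables $g \mapsto \g_0 g$ (Haar measure is invariant) then turns the right-hand side into
$$
\int_G \sum_{\g\in(N\cap\G)\bk\G_1(q)}\chi_T(\bx_0\g\g_0 g)\,\psi(g)\,dg,
$$
which matches the left-hand side after interchanging sum and integral (justified since everything is non-negative, or since the sum is locally finite because $\Or$ is discrete and $\chi_T$ has bounded support). Since $\psi$ is real-valued the conjugation is harmless.

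I should also note that $F_T^q$ and $\Psi_{\g_0}^q$ are genuinely functions on $\G_1(q)\bk G$: the former by its construction as a sum over $(N\cap\G)\bk\G_1(q)$ of the left-$N$-invariant, left-$\Gamma_1(q)$-equivariant kernel $\chi_T(\bx_0\,\cdot\,)$, and the latter because summing $\psi(\g_0^{-1}\g g)$ over all $\g\in\G_1(q)$ is manifestly left $\G_1(q)$-invariant in $g$. The $K$-invariance on the right is inherited from the $K_0$-invariance of $\psi$ and of $B_T$ after the reduction $g_0=e$ made at the start of the subsection. The only real subtlety — and the step I expect to require the most care — is the bookkeeping in the unfolding: making sure the index set $(N\cap\G)\bk\G_1(q)$ is the correct one (this is where \eqref{eq:stabNq} enters) and that the folding of $\Psi_{\g_0}^q$ against $\int_{\G_1(q)\bk G}$ produces $\int_G$ with no extra factor of $[\G:\G_1(q)]$ or similar; everything else is a routine manipulation of invariant integrals.
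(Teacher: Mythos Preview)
Your proposal is correct and follows essentially the same unfolding/refolding argument as the paper: both use \eqref{eq:stabNq} to identify the orbit sum with a sum over $(N\cap\G)\bk\G_{1}(q)$, unfold $\Psi_{\g_{0}}^{q}$ against the quotient integral using the $\G_{1}(q)$-invariance of $F_{T}^{q}$, and apply the change of variables $g\mapsto\g_{0}g$. The only cosmetic difference is that the paper transforms the left-hand side directly into the right-hand side (making the substitution $g\mapsto\g_{0}^{-1}g$ at the outset), whereas you compute both sides toward a common middle expression.
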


\begin{proof}
Starting with the left hand side of \eqref{xisum}, we insert Definition \ref{xiDef}, use that $N$ stabilizes ${\bx}_{0}$, and that $N\cap \G=N\cap \G_{1}(q)$:
\beann
\sum_{{\bx}\in{\bx}_{0} \G_{1}(q)}
\xi_{T}({\bx} \g_{0})
&=&
\sum_{{\bx}\in{\bx}_{0}\G_{1}(q)}
\int_{ G} \chi_{T}({\bx}\g_{0} g)\psi(g)dg
\\
&=&
\sum_{\g\in(N\cap \G)\bk \G_{1}(q)}
\int_{ G}
\chi_{T}({\bx}_{0} \g g)
\psi(\g_{0}^{-1}g)
dg
\\
&=&
\int_{ G}
F_{T}^{q}(g)
\psi(\g_{0}^{-1}g)
dg
\\
&=&
\sum_{\g\in \G_{1}(q)}
\int_{\G_{1}(q)\bk G}
F_{T}^{q}(g)
\psi(\g_{0}^{-1}\g g)
dg
\\
&=&
\int_{ G}
F_{T}^{q}(g)
\Psi_{\g_{0}}( g)
dg
,
\eeann
where we used the definition of $F_{T}^{q}$, ``refolded'' (the reverse of the ``unfolding trick''), and used the definition of $\Psi_{\g_{0}}^{q}$.
\end{proof}

Applying Proposition \ref{h1} to each $\G_1(q)$ and noting that
 Sobolev norms of $\Psi_{\g_{0}}^{q}$ are same as
that of $\psi$, i.e., independent of $\g_0$ and $q$, we obtain:

\begin{Prop}\label{FTPsiq}
Let
$$
0<\gd(1-\gd)=\gl_{0}^{(q)}<\gl_{1}^{(q)}\le\cdots\le \gl_{k_{q}}^{(q)}<1/4
$$
denote the point spectrum in $L^{2}(\G_{1}(q)\bk G)$, and let $\phi_{0}^{(q)},\dots,\phi_{k_{q}}^{(q)}$ be the corresponding eigenfunctions of unit norm. Then for any $\vep>0$,
\beann
\<
F_{T}^{q}
,
\Psi_{\g_{0}}^{q}
\>_{\G_{1}(q)\bk G}
&=&
\sum_{j=0}^{k_{q}}
\<
F_{T}^{q}
,
\phi_{j}^{(q)}
\>_{\G_{1}(q)\bk G}
\<
\phi_j^{(q)}
,
\Psi^{q}_{\g_{0}}
\>
_{\G_{1}(q)\bk G}
\\
&&
+\
O_{\vep}\bigg(T^{{\frac 12}+\frac3{5}(\gd-{\frac 12})+\vep}\bigg)
,
\eeann
as $T\to \infty$. The implied constant depends on $\vep$ and a Sobolev norm
 of $\psi$, but not on $q$ or $\g_{0}$.
\end{Prop}



\begin{lem}\label{qto1a}
For $q'|q$ and $\phi^{(q')}\in L^2(\G_1(q') \ba G)$ of norm one,
consider the normalized old form $\phi^{(q)}$ in $L^2(\G_1(q)\ba G) $ of level $q'$:
\be\label{eq:phi1Scale}
\phi^{(q)}
=
{1\over
\sqrt{[\G_1(q'):\G_{1}(q)]}
}
\phi^{(q')}
.
\ee

Then for any $\gamma_0\in \G$,
\be\label{eq:qto1b}
\dfrac{
\<\phi^{(q)}, \Psi^{q}_{\g_{0}}\>_{\G_{1}(q)\bk G}
}{
\<\phi^{(q')}, \Psi^{q'}_{\g_{0}}\>_{\G_{1}(q')\bk G}
}
=
\dfrac{
\<\phi^{(q)}, F^{q}_{T}\>_{\G_{1}(q)\bk G}
}{
\<\phi^{(q')}, F^{q'}_{T}\>_{\G_{1}(q')\bk G}
}
=
{1\over
\sqrt{[\G_1 (q'):\G_{1}(q)]}
}
.
\ee
\end{lem}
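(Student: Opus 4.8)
The plan is to exploit the fact that the old form $\phi^{(q)}$ is, up to the normalizing scalar in \eqref{eq:phi1Scale}, literally the pullback of $\phi^{(q')}$ under the covering $\G_1(q)\bk G \to \G_1(q')\bk G$, and that the test functions $\Psi^q_{\g_0}$ and $F^q_T$ are built by summing the \emph{same} kernel ($\psi(\g_0^{-1}\,\cdot\,)$, respectively $\chi_T({\bx}_0\,\cdot\,)$) over $\G_1(q)$, respectively $(N\cap\G)\bk\G_1(q)$. First I would set up the covering map $\pi\colon\G_1(q)\bk G\to\G_1(q')\bk G$, which is $[\G_1(q'):\G_1(q)]$-to-one, and record the change-of-variables identity $\int_{\G_1(q)\bk G} f\circ\pi\;dg = [\G_1(q'):\G_1(q)]\int_{\G_1(q')\bk G} f\;dg$ for the invariant measures.

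The key observation is that $\Psi^q_{\g_0} = \Psi^{q'}_{\g_0}\circ\pi$ in the sense of a \emph{refolding}: by definition $\Psi^q_{\g_0}(g)=\sum_{\g\in\G_1(q)}\psi(\g_0^{-1}\g g)$, and since $\psi$ is $\G$-automorphized from a fixed function on $G$ supported in a small neighborhood where the covering is injective, summing over the larger group $\G_1(q)$ versus the smaller quotient structure of $\G_1(q')$ differs only in how the coset sum is organized — the resulting function on $\G_1(q)\bk G$ is exactly the lift of $\Psi^{q'}_{\g_0}$. The same remark applies verbatim to $F^q_T(g)=\sum_{\g\in(N\cap\G)\bk\G_1(q)}\chi_T({\bx}_0\g g)$, using the hypothesis \eqref{eq:stabNq} that $N\cap\G_1(q)=N\cap\G$ so that the coset space $(N\cap\G)\bk\G_1(q)$ is genuinely the relevant index set and behaves compatibly under $\pi$. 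Granting $\Psi^q_{\g_0}=\Psi^{q'}_{\g_0}\circ\pi$ and $F^q_T=F^{q'}_T\circ\pi$, both inner products become
\begin{align*}
\<\phi^{(q)},\Psi^q_{\g_0}\>_{\G_1(q)\bk G}
&=\frac{1}{\sqrt{[\G_1(q'):\G_1(q)]}}\,\<\phi^{(q')}\circ\pi,\;\Psi^{q'}_{\g_0}\circ\pi\>_{\G_1(q)\bk G}\\
&=\frac{1}{\sqrt{[\G_1(q'):\G_1(q)]}}\cdot[\G_1(q'):\G_1(q)]\cdot\<\phi^{(q')},\Psi^{q'}_{\g_0}\>_{\G_1(q')\bk G},
\end{align*}
and identically with $F^{q'}_T$ in place of $\Psi^{q'}_{\g_0}$; forming the ratio kills the index factor, giving $1/\sqrt{[\G_1(q'):\G_1(q)]}$ in both cases, which is \eqref{eq:qto1b}.

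I expect the main obstacle to be the bookkeeping in the refolding step: one must check carefully that $\sum_{\g\in\G_1(q)}\psi(\g_0^{-1}\g g)$, viewed on $\G_1(q)\bk G$, really does coincide with the pullback of $\sum_{\g\in\G_1(q')}\psi(\g_0^{-1}\g g)$ on $\G_1(q')\bk G$ — this requires that $\g_0\in\G$ normalizes the tower appropriately (or more precisely that left translation by $\g_0$ intertwines the two covering structures), and for $F^q_T$ it is exactly here that the stabilizer condition \eqref{eq:stabNq} is used, to ensure the quotient $(N\cap\G)\bk\G_1(q)$ fibers correctly over $(N\cap\G)\bk\G_1(q')$ with fibers of size $[\G_1(q'):\G_1(q)]$. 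Once these identifications are in hand the scalar arithmetic is immediate. A secondary point worth a sentence is that $\phi^{(q')}$ need not be $N\cap\G$-invariant in any special way, but this is irrelevant since we only ever integrate it against functions on the full quotient.
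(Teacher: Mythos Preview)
Your approach has a genuine error. The claimed identity $\Psi^q_{\g_0} = \Psi^{q'}_{\g_0}\circ\pi$ is false: as a function on $G$, the pullback $\Psi^{q'}_{\g_0}\circ\pi$ is still $\sum_{\g\in\G_1(q')}\psi(\g_0^{-1}\g g)$, a sum over the \emph{larger} group $\G_1(q')$, whereas $\Psi^q_{\g_0}$ sums only over $\G_1(q)$. (The support condition on $\psi$ forces each sum to have at most one nonzero term, but those terms disagree at points $g$ for which the unique $\g\in\G_1(q')$ with $\psi(\g_0^{-1}\g g)\neq 0$ lies in $\G_1(q')\setminus\G_1(q)$.) The same objection applies verbatim to $F^q_T$ versus $F^{q'}_T\circ\pi$. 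Indeed, your own displayed computation, taken at face value, produces the ratio $\sqrt{[\G_1(q'):\G_1(q)]}$ --- the reciprocal of what the lemma asserts --- so the conclusion does not follow from the argument as written.

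The paper instead unfolds all the way to $G$ and refolds at the other level: since $\phi^{(q)}$ is $\G_1(q)$-invariant,
\[
\<\phi^{(q)},\Psi^q_{\g_0}\>_{\G_1(q)\bk G}
=\int_G \phi^{(q)}(g)\,\psi(\g_0^{-1}g)\,dg
=\frac{1}{\sqrt{[\G_1(q'):\G_1(q)]}}\int_G\phi^{(q')}(g)\,\psi(\g_0^{-1}g)\,dg,
\]
and refolding the last integral over $\G_1(q')\bk G$ gives $\<\phi^{(q')},\Psi^{q'}_{\g_0}\>_{\G_1(q')\bk G}$; the $F_T$ case is identical, with \eqref{eq:stabNq} used in the unfold. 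In covering language, the correct relation is not that $\Psi^q_{\g_0}$ is the pullback of $\Psi^{q'}_{\g_0}$, but that $\Psi^{q'}_{\g_0}$ is the \emph{pushforward} (fiber sum) of $\Psi^q_{\g_0}$; pairing the pullback $\phi^{(q')}\circ\pi$ against $\Psi^q_{\g_0}$ on $\G_1(q)\bk G$ then equals the level-$q'$ inner product with \emph{no} extra index factor, which is exactly the scalar you need.
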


\begin{proof}
Consider the inner product
\beann
\<\phi^{(q)}, \Psi^{q}_{\g_{0}}\>_{\G_{1}(q)\bk G}
&=&
\int_{\G_{1}(q)\bk G}\phi^{(q)}(g) \Psi^{q}_{\g_{0}}(g)dg
\\
&=&
\int_{ G}\phi^{(q)}(g) \psi(\g_{0}^{-1}g)dg
\\
&=&
{1\over \sqrt{[\G_1(q'):\G_{1}(q)]}
}
\int_{ G}\phi^{(q')}(\g_{0}g) \psi(g)dg
\\
&=&
{1\over \sqrt{[\G_1(q'):\G_{1}(q)]}
}
\<\phi^{(q')}, \Psi^{q'}_{\gamma_0}\>_{\G\bk G}
,
\eeann
where we unfolded, used \eqref{eq:stabNq} and the $\G_1 (q')$-invariance of $\phi^{(q')}$, and refolded.

\end{proof}



\begin{lem}\label{lemFT}
Let $q=q'q''$ and assume that
$$
\phi_{j}^{(q)}
=
{1\over \sqrt{[\G_{1}(q'):\G_{1}(q)]}}
\phi_{j}^{(q')}
.
$$
Then
$$
\<
F_{T}^{q},\phi_{j}^{(q)}
\>
=
{1\over \sqrt{
[\G:\G_{1}(q
)]
}}
\bigg(
c_{j}^{(q')}\
T^{s_{j}}
+
d_{j}^{(q')}
\
T^{1-s_{j}}
\bigg)
,
$$
where $c_{j}^{(q')}$ and $d_{j}^{(q')}$ are independent of $q''$.
\end{lem}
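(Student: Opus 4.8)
The plan is to unfold the inner product $\langle F_T^q, \phi_j^{(q)}\rangle$ in the same manner as Lemma \ref{ab}, reducing it to an integral of $\phi_j^{(q)}$ over the closed horocycle $(N\cap\G)\bk N$ translated by $a_y$, and then to invoke Theorem \ref{ppoo} applied on the quotient $\G_1(q)\bk\bH$. First I would observe that since $N\cap\G_1(q)=N\cap\G$ by \eqref{eq:stabNq}, and $N$ stabilizes $\bx_0$ with $\bx_0 a_y = y^{-1}\bx_0$, the unfolding computation of Lemma \ref{ab} goes through verbatim with $\G$ replaced by $\G_1(q)$ and $\Psi$ replaced by $\phi_j^{(q)}$:
$$
\langle F_T^q, \phi_j^{(q)}\rangle_{\G_1(q)\bk G}
=
\int_{y > T^{-1}\|\bx_0\|}
\bigg(\int_{n_x\in (N\cap\G)\bk N}\phi_j^{(q)}(n_x a_y)\,dx\bigg)\frac{dy}{y^2}
=
\int_{y>T^{-1}\|\bx_0\|}(\phi_j^{(q)})^N(a_y)\,\frac{dy}{y^2},
$$
where $(\phi_j^{(q)})^N$ is the horocycle integral from \eqref{PS}.

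Next I would apply Theorem \ref{ppoo} on $X_q := \G_1(q)\bk\bH$: the eigenfunction $\phi_j^{(q)}$ has Laplace eigenvalue $\lambda_j = s_j(1-s_j)$, and since $N\cap\G_1(q)=N\cap\G$, the horocycle $(N\cap\G)\bk N$ is still closed in $X_q$ (either $\infty$ is a parabolic fixed point for $\G$, hence for $\G_1(q)$, or $\infty\notin\Lambda(\G)\supset\Lambda(\G_1(q))$). Thus Theorem \ref{ppoo} gives
$$
(\phi_j^{(q)})^N(a_y) = \tilde c_j^{(q)}\, y^{1-s_j} + \tilde d_j^{(q)}\, y^{s_j}
$$
for constants depending on $\phi_j^{(q)}$. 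Integrating $\int_{y>T^{-1}\|\bx_0\|} y^{-1-s_j}\,dy = \frac{1}{s_j}(T/\|\bx_0\|)^{s_j}$ and the analogous integral for the $y^{s_j}$ term (convergent at $y\to\infty$ in the same way it was handled in Proposition \ref{h1}), I obtain
$$
\langle F_T^q, \phi_j^{(q)}\rangle = c_j^{(q)}\,T^{s_j} + d_j^{(q)}\,T^{1-s_j}
$$
with $c_j^{(q)} = \tilde c_j^{(q)}/(s_j\|\bx_0\|^{s_j})$ and similarly for $d_j^{(q)}$.

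The remaining and genuinely crucial point is to show the scaling $c_j^{(q)} = [\G:\G_1(q)]^{-1/2}\,c_j^{(q')}$ and likewise for $d_j^{(q)}$, and in particular that these depend only on $q'$, not on $q''$. For this I would use the hypothesis $\phi_j^{(q)} = [\G_1(q'):\G_1(q)]^{-1/2}\,\phi_j^{(q')}$ together with $[\G:\G_1(q)] = [\G:\G_1(q')]\cdot[\G_1(q'):\G_1(q)]$. Since the horocycle integral \eqref{PS} depends \emph{linearly} on the eigenfunction and the domain $(N\cap\G)\bk N$ is literally the same set in $X_q$ and $X_{q'}$ (again using \eqref{eq:stabNq}), the constants transform linearly: $(\phi_j^{(q)})^N = [\G_1(q'):\G_1(q)]^{-1/2}(\phi_j^{(q')})^N$, so $\tilde c_j^{(q)} = [\G_1(q'):\G_1(q)]^{-1/2}\tilde c_j^{(q')}$ and similarly for $\tilde d_j^{(q)}$. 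Combining with Lemma \ref{qto1a} (which already records the $[\G_1(q'):\G_1(q)]^{-1/2}$ factor relating the $q$- and $q'$-level inner products) pins the overall normalization to $[\G:\G_1(q)]^{-1/2}$; and the $q''$-independence follows because everything has been expressed in terms of the level-$q'$ objects $\phi_j^{(q')}$, $\tilde c_j^{(q')}$, $\tilde d_j^{(q')}$. The main obstacle I anticipate is making the identification of horocycle integrals across levels fully rigorous — one must be careful that the ``same domain'' $(N\cap\G)\bk N$ really does parametrize the closed horocycle identically in both $X_q$ and $X_{q'}$ and that the old-form relation \eqref{eq:phi1Scale} is compatible with the pointwise evaluation along that horocycle, rather than merely with the $L^2$-norms.
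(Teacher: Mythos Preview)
Your proposal is correct and follows essentially the same approach as the paper: unfold $\langle F_T^q,\phi_j^{(q)}\rangle$ as in Lemma~\ref{ab}, reduce to the horocycle integral, and invoke Theorem~\ref{ppoo}. The only cosmetic difference is the order of operations---the paper inserts the scaling hypothesis $\phi_j^{(q)}=[\G_1(q'):\G_1(q)]^{-1/2}\phi_j^{(q')}$ \emph{before} applying Theorem~\ref{ppoo} (so the constants $c_j^{(q')},d_j^{(q')}$ arise directly at level $q'$), whereas you apply Theorem~\ref{ppoo} at level $q$ first and then scale; your closing appeal to Lemma~\ref{qto1a} is redundant, since the linearity argument you already gave, together with the index identity $[\G:\G_1(q)]=[\G:\G_1(q')][\G_1(q'):\G_1(q)]$ and an absorption of $[\G:\G_1(q')]^{1/2}$ into the constants, finishes the job.
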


\begin{proof}
The inner product
$
\<
F_{T}^{q}
,
\phi_{j}^{(q)}
\>
$
can be unfolded again, giving
\beann
\<
F_{T}^{q}
,
\phi_{j}^{(q)}
\>
&=&
 \int_{y>\|{\bx}_0 \|/T}
\int_{( N\cap \G)\ba N} \phi_j^{(q)} (n a_y)\; dn\,
{dy\over y^{2}}
\\
&=&
{1\over \sqrt{[\G_{1}(q'):\G_{1}(q)]}}
 \int_{y>\|{\bx}_0 \|/T}
\int_{( N\cap \G)\ba N} \phi_j^{(q')} (n a_y)\; dn\,
{dy\over y^{2}}
\\
&=&
{1\over \sqrt{[\G_{1}(q'):\G_{1}(q)]}}
 \int_{y>\|{\bx}_0 \|/T}
\bigg(
c_{j}^{(q')}y^{1-s_{j}}
+
d_{j}^{(q')}y^{s_{j}}
\bigg)
{dy\over y^{2}}
\\ &=&
{\sqrt{[\G:\G_{1}(q')]}\over \sqrt{[\G:\G_{1}(q)]}}
 \int_{y>\|{\bx}_0 \|/T}
\bigg(
c_{j}^{(q')}y^{1-s_{j}}
+
d_{j}^{(q')}y^{s_{j}}
\bigg)
{dy\over y^{2}}
\eeann
where we used Theorem \ref{ppoo} as well as the identity
 $$[\G:\G_{1}(q)]=[\G:\G_{1}(q')][\G_{1}(q'):\G_{1}(q)] .$$ The claim follows from a simple computation and renaming the constants.
\end{proof}



Recall from Definition \ref{fBdef} that
 square-free $q$ are to be decomposed as $q=q'q''$ with $q'\mid \fB$ and $(q'',\fB)=1$. 
%
%
Let
$$
0<\gd(1-\gd)=\gl_0^{(q)}<\gl_1^{(q)}\le\cdots\le\gl^{(q)}_{k_{q}}<1/4
$$
be
the eigenvalues of the Laplacian acting on $L^2(\G(q)\bk \bH)$. The eigenvalues below $\gt(1-\gt)$ are  all oldforms coming from level $1$, with the possible exception of finitely many eigenvalues coming from level $q'\mid\fB$.

For ease of exposition, assume the spectrum below $\gt(1-\gt)$ consists of only the base eigenvalue $\gl_{0}=\gd(1-\gd)$ corresponding to $\phi^{(q)}$, and one newform $\tilde\phi^{(q)}$ from the ``bad'' level $q'\mid\fB$. The general case is a finite sum of such terms.

Combining \eqref{xisum} and Proposition \ref{FTPsiq} with Lemmata \ref{qto1a} and \ref{lemFT} gives
\beann
\sum_{{\bx}\in{\bx}_{0}\G_{1}(q)} \xi_{T}({\bx} \g_{0})
&=&
{1\over [\G:\G_{1}(q)]}
\<F_{T},\phi^{(1)}\>_{\G\bk G}
\<\phi^{(1)},\Psi^{1}\>_{\G\bk G}
\\
&&
+
{[\G:\G_{1}(q')]\over [\G:\G_{1}(q)]}
\<F_{T},\tilde\phi^{(q')}\> _{\G_{1}(q')\bk G}
\<\tilde \phi^{(q')},\Psi^{q'}_{\g_{0}}\>_{\G_{1}(q')\bk G}
\\
&&
+
O_{\vep}(T^{\gt+\vep}+T^{{\frac 12}+\frac35(\gd-{\frac 12})+\vep})
.
\eeann
Setting
$$
\cE(T,q',\g_{0}):=
{[\G:\G_{1}(q')]}
\<F_{T},\tilde\phi^{(q')}\> _{\G_{1}(q')\bk G}
\<\tilde \phi^{(q')},\Psi^{q'}_{\g_{0}}\>_{\G_{1}(q')\bk G}
,
$$
the proposition follows by recognizing the main term as the main contribution to $\Xi(T)$.

This completes the proof of Theorem \ref{thm:count2}.

\section{Proofs of the Sieving Theorems}\label{sieving}

We now consider
$$Q=x^2+y^2-z^2$$ and fix
a finitely generated subgroup $\G <\SO_Q(\z)$ with $\delta_\G>1/2$.
Let ${\bx}_0\in \z^3\setminus\{0\}$ with $Q({\bx}_0)=0$.

Again by considering the spin cover $ G:=\SL_2 \to \SO_Q$ over $\q$,
we may assume without loss of generality that $\G$ is a finitely generated subgroup
of $\SL_2(\z)$.

Let $F$ be a polynomial which is integral on the orbit $\Or={\bx}_0\G$.


\subsection{Strong Approximation}
\

We first pass to a finite index subgroup of our original $\G$
which is chosen so that its projection to $\SL_2(\Z/p\Z)$ is either the identity or all of $\SL_2(\Z/p\Z)$.
\begin{lemma}
There exists an integer $N\ge1$ so that
$$\G(N)=\{\g\in\G:\g\equiv I(\mod N)\}$$
 projects onto $\SL_2(\Z/p\Z)$ for $p\nmid N$. Obviously the projection of $\G(N)$ in $\SL_2(\Z/p\Z)$ for $p|N$ is the identity.
\end{lemma}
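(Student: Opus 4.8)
The plan is to invoke the strong approximation theorem for $\G$, viewed as a finitely generated Zariski-dense subgroup of $\SL_2$. The key point is that $\G$ is non-elementary (since $\gd_\G>1/2>0$), hence its Zariski closure in $\SL_2$ is all of $\SL_2$: a Zariski-closed subgroup of $\SL_2$ is either finite, solvable (contained in a Borel or a normalizer of a torus), or all of $\SL_2$, and the first two possibilities are ruled out because they would force $\G$ to be elementary. Matthews--Vaserstein--Weisfeiler (or, in the case at hand, the more elementary results of Nori and of Weisfeiler on strong approximation for thin groups, as used in \cite{BourgainGamburdSarnak2008}) then gives an integer $N_0\ge 1$ such that the reduction map $\G \to \prod_{p\nmid N_0}\SL_2(\Z/p\Z)$ has image containing $\prod_{p\nmid N_0}\SL_2(\Z/p\Z)$; equivalently $\G$ surjects onto $\SL_2(\Z/p\Z)$ for every $p\nmid N_0$ and, more strongly, onto $\SL_2(\Z/m\Z)$ for every $m$ coprime to $N_0$.

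Next I would pass from $\G$ to the congruence subgroup $\G(N_0)$. Since $\G(N_0)$ is normal of finite index in $\G$ with quotient a subgroup of $\SL_2(\Z/N_0\Z)$, and since $\G$ surjects onto $\SL_2(\Z/p\Z)$ for $p\nmid N_0$, the group $\G(N_0)$ still surjects onto $\SL_2(\Z/p\Z)$ for each such $p$: indeed, for $p\nmid N_0$ the composite $\G \to \SL_2(\Z/N_0p\Z) \to \SL_2(\Z/p\Z)$ is onto by strong approximation, and $\SL_2(\Z/N_0 p\Z) \cong \SL_2(\Z/N_0\Z)\times \SL_2(\Z/p\Z)$ by CRT, so the kernel $\G(N_0)$ of the first factor still covers the second factor, i.e. surjects onto $\SL_2(\Z/p\Z)$. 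Meanwhile for $p\mid N_0$ the reduction of $\G(N_0)$ mod $p$ is trivial by definition. Setting $N:=N_0$ and renaming $\G(N_0)$ as the new $\G$ (legitimate since the critical exponent, finite generation, and the orbit-counting hypotheses are all inherited by finite-index subgroups, the critical exponent being unchanged) gives exactly the asserted dichotomy: $\G(N)$ projects onto $\SL_2(\Z/p\Z)$ for $p\nmid N$ and onto the identity for $p\mid N$.

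The main obstacle, and really the only substantive input, is establishing Zariski-density and then citing the correct form of strong approximation: one needs that a finitely generated non-elementary (equivalently, $\delta>0$) subgroup of $\SL_2(\Z)$ is Zariski-dense in $\SL_2$, and one needs strong approximation in the form valid for \emph{thin} groups (not just finite-index subgroups of $\SL_2(\Z)$), which is precisely the Matthews--Vaserstein--Weisfeiler / Nori / Weisfeiler theory already invoked in \cite{BourgainGamburdSarnak2008,BourgainGamburd2007}. Everything after that — the finite-index bookkeeping with the Chinese Remainder Theorem and the inheritance of hypotheses under passing to $\G(N_0)$ — is routine. One should also remark that replacing $\G$ by $\G(N)$ is harmless for the sieve: it only changes the orbit $\Or$ to a union of finitely many $\G(N)$-orbits, and any almost-prime result (with Zariski density) on one of these sub-orbits yields the same on $\Or$.
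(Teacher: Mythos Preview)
Your approach is essentially the same as the paper's: the paper's proof is the single sentence ``This follows from Strong Approximation; see, e.g.\ \cite[\S2]{Gamburd2002},'' and you have correctly unpacked what that citation entails (Zariski density from non-elementarity, then Matthews--Vaserstein--Weisfeiler/Nori). One small caution: your CRT step ``the kernel $\G(N_0)$ of the first factor still covers the second factor'' is not automatic from surjectivity of $\G\to\SL_2(\Z/p\Z)$ alone (think of a diagonal subgroup); you need either the open-closure form of strong approximation (so that $\G(N_0)$ is dense in $\ker(\SL_2(\widehat\Z)\to\SL_2(\Z/N_0\Z))$) or a Goursat/composition-factor argument using that $\PSL_2(\F_p)$ for $p\nmid N_0$ cannot appear as a composition factor of a subgroup of $\SL_2(\Z/N_0\Z)$ --- but this is indeed routine, as you say.
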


This follows from Strong Approximation; see, e.g. \cite[\S2]{Gamburd2002}. 
From now on we replace $\G$ by $\G(N)$. We use Goursat's Lemma (e.g. \cite{Lang2002}, p. 75) to have a similar statement for the reduction modulo a square-free parameter:
\begin{theorem}[Thm 2.1 of \cite{BourgainGamburdSarnak2008}]\label{strongApp}
There exists a number $\fB$ such that if
 $q=q'q''$ is square-free
 with $q'|\fB$ and $(q'',\fB)=1$ then the projection of $\G$ in $\SL_2(\Z/q\Z)$ is
isomorphic to
 $
 \SL_2(\Z/q''\Z)$.
\end{theorem}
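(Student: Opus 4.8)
The plan is to reduce the statement to the level-$\fB$-free strong approximation already available for $\G$ after replacing it by $\G(N)$, and then combine the prime-by-prime information via Goursat's Lemma. First I would recall from the preceding lemma that, having replaced $\G$ by $\G(N)$, the reduction $\pi_p(\G)$ modulo a prime $p$ is all of $\SL_2(\Z/p\Z)$ when $p\nmid N$, and is trivial when $p\mid N$. Take $\fB$ to be (a suitable power of) $N$, or more precisely the product of the primes dividing $N$ together with the finitely many "exceptional" primes forced upon us below; since $N$ is a fixed integer depending only on $\G$, this $\fB$ is a fixed integer. For square-free $q$ coprime to $\fB$, every prime $p\mid q$ satisfies $p\nmid N$, so $\pi_p(\G)=\SL_2(\Z/p\Z)$.

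The key step is to upgrade this prime-by-prime surjectivity to surjectivity onto the full product $\prod_{p\mid q}\SL_2(\Z/p\Z)\cong \SL_2(\Z/q\Z)$ (the last isomorphism being the Chinese Remainder Theorem). Here I would invoke Goursat's Lemma: the image of $\G$ in $\SL_2(\Z/p_1\Z)\times\SL_2(\Z/p_2\Z)$, having full projection to each factor, corresponds to a common quotient of the two factors; since for distinct primes $p_1\neq p_2$ exceeding any fixed bound the groups $\PSL_2(\Z/p_1\Z)$ and $\PSL_2(\Z/p_2\Z)$ are simple non-isomorphic, their only common quotient is trivial, so the image is the whole product. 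One must be slightly careful about the centers $\{\pm I\}$ and about the finitely many small primes for which $\PSL_2(\Z/p\Z)$ is not simple ($p=2,3$); these are absorbed into $\fB$. Iterating Goursat across the primes dividing $q''$ — formally an induction on the number of prime factors of $q''$ — gives $\pi_{q''}(\G)=\SL_2(\Z/q''\Z)$. For the "bad" part $q'\mid\fB$ the projection of $\G=\G(N)$ is trivial by construction, so the projection of $\G$ in $\SL_2(\Z/q\Z)$ is exactly the subgroup $\{I\}\times\SL_2(\Z/q''\Z)$ under the CRT splitting, i.e. isomorphic to $\SL_2(\Z/q''\Z)$, as claimed.

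The main obstacle is the Goursat step, namely verifying that the relevant simple groups $\PSL_2(\Z/p\Z)$ are pairwise non-isomorphic for large distinct primes (a standard order count: $|\PSL_2(\Z/p\Z)|=p(p^2-1)/2$ determines $p$) and correctly handling the center $\{\pm I\}$ so that lifting simplicity of $\PSL_2$ to the $\SL_2$ level does not introduce a nontrivial common quotient beyond what is accounted for in $\fB$. Everything else — the CRT identification $\SL_2(\Z/q\Z)\cong\prod_{p\mid q}\SL_2(\Z/p\Z)$, the induction on the number of prime factors, and the bookkeeping of the finitely many exceptional primes into $\fB$ — is routine, and indeed this is exactly the content cited as Theorem 2.1 of \cite{BourgainGamburdSarnak2008}, so I would simply reference that argument rather than reproduce it in full.
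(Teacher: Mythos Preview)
Your proposal is correct and follows exactly the approach indicated in the paper: the paper does not give a proof of this theorem but merely cites it as Theorem~2.1 of \cite{BourgainGamburdSarnak2008}, remarking that Goursat's Lemma is used to pass from the prime-by-prime surjectivity of the preceding lemma to the square-free statement. Your plan spells out precisely this Goursat argument (including the handling of centers and small primes absorbed into $\fB$), so it is a faithful expansion of what the paper leaves implicit.
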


\subsection{Executing the sieve}
Choose a weight $\xi_T$ as in Definition \ref{xiDef}.
Let $\cA=\cA(T)=\{a_n(T)\}$ where
$$
a_n(T):= \sum_{{\bx}\in\Or\atop F({\bx})=n}\xi_T({\bx}).
$$
Then
$$
|\cA(T)| = \sum_n a_n(T) = \sum_{{\bx}\in\Or}\xi_T({\bx})
=
\Xi(T)
=:\cX,
$$
where the leading term  $\cX\asymp T^{\gd}$. 

For $q$ square-free
$$
|\cA_q(T)| = \sum_{n\equiv0(q)} a_n(T) =  \sum_{{\bx}\in\Or\atop F({\bx})\equiv0(n)}\xi_T({\bx})
.
$$
Let $\G_{{\bx}_{0}}(q)$ be the subgroup of $\G$ which stabilizes ${\bx}_{0}\mod q$, i.e.
$$
\G_{{\bx}_{0}}(q):=\{ \g\in\G:{\bx}_{0}\g\equiv\g(q) \}
.
$$
Then
$$
|\cA_q(T)|
 =
  \sum_{\g'\in\G_{{\bx}_{0}}(q)\bk\G\atop F({\bx}_{0}\g)\equiv0(q)} \left( \sum_{\g\in \op{Stab}_\G({\bx}_0)\bk \G_{{\bx}_{0}}(q)}\xi_T({\bx}_0\g\g')\right).
$$

Let
$$
|\Or^{F}(q)|:= \sum_{\g\in\G_{{\bx}_{0}}(q)\bk\G\atop F({\bx}_{0}\g)\equiv0(q)}1
,
$$
so that using Theorem \ref{thm:count2} gives
$$
|\cA_q(T)|
=
 |\Or^{F}(q)|
 \left(
 {1\over [\G:\G_{{\bx}_{0}}(q)]} (\cX + \cX_{q'})+ O_{\vep}(T^{\vep} |\Or^F(q)| (T^{\gt}+T^{{\frac 12}+\frac35(\gd-{\frac 12})})
\right)
,
$$
with
\be\label{badbnd}
\cX_{q'}\ll \cX^{1-\gz},
\ee
for some $\gz>0$, uniformly in $q'$.
Define
$$
g^F(q) := {|\Or^F(q)|\over [\G:\G_{{\bx}_0}(q)]}.
$$
\begin{Lem}\label{gis}
Assume the pair $(\Or,F)$ is strongly primitive.
Then for $q$ square-free, $g^F(q)$ is completely multiplicative, and $g^F(p)<1$. Furthermore,
\begin{enumerate}
\item
For $F_{\cH}({\bx})=z$, we have
\be
g^{F_\cH}(p) =
\begin{cases}{2p^{-1} + O(p^{-3/2}),}&\text{if $p\equiv1(4)$,}\\
{0,}&\text{otherwise.}
\end{cases}\ee
\item
For $F=F_\cA=\frac1{12}xy$, we have $g^{F_\cA}(p) =4p^{-1} + O(p^{-3/2})$.
\item
For $F=F_\cC=\frac1{60}xyz$, we have
\be
g^{F_\cC}(p) =
\begin{cases}
{4p^{-1} + O(p^{-3/2}),}&\text{if $p\equiv1(4)$,}\\
{6p^{-1} + O(p^{-3/2}),}&\text{if $p\equiv3(4)$.}
\end{cases}\ee
\end{enumerate}
\end{Lem}

\begin{proof}
For unramified $q=q_1q_2$ with $q_1$ and $q_2$ relatively prime and both prime to $\fB$, then  $\Or(q)$, the orbit of ${{\bx}_0}$ mod $q$ is equal to $\Or(q_1)\times\Or(q_2)$ in $(\Z/q_1\Z)^3\times(\Z/q_2\Z)^3=(\Z/q\Z)^3$.

For ramified $q=q'q''$, with $q''|\fB$, then $\G$ projects onto the identity mod $q''$, so $\Or(q'')$ is just one point, i.e. $\Or(q'')=\{{{\bx}_0}\}$. It also follows in this case that $\Or^F(q'q'')$ is isomorphic to $\Or^F(q')\times \Or^F(q'')$.

Since $[\G:\G_{{\bx}_0}(q)]=|\Or(q)|$, we have shown that
$$
g^F(q)={|\Or^F(q)|\over |\Or(q)|}
$$
is multiplicative, and thus
is determined by its values on the primes (only square-free $q$ are ever used).

From the assumption that the pair $(\Or,F)$ is strongly primitive, it immediately follows that $|\Or^F(p)|< |\Or(p)|$, i.e. $g^F(p)<1$. Notice that if $p|\fB$ then $|\Or(p)|=1$ and $|\Or^F(p)|=0$, so $g^F(p)=0$.

It remains to compute the values of $g^F$ on primes $p\mid\fB$.
Denote by $V$ the cone defined by $Q=x^2+y^2-z^2=0$, minus the origin. I.e.
$$
V=\{ (x,y,z)\neq(0,0,0): x^2+y^2-z^2=0\}.
$$
For $F=F_\cH=z,$ let
$$
W_1
=
\{ {\bx}\in V: F_\cH({\bx})=0\}
=
\{ (x,y,0)\neq(0,0,0): x^2+y^2=0\}
.
$$
As $V$ is a homogeneous space of $G$ with a connected stabilizer, we have
$$
\Or(p)=V(\F_p),\quad \text{and hence } \Or^{F_\cH}(p)=W_1(\F_p).
$$
We can easily calculate $|V(\F_p)|=p^2+O(p^{3/2})$. If $p\equiv3(4)$, then $W_1(\F_p)$ is empty. If $p\equiv1(4)$, then $W_1$ is the disjoint union of the two lines $\{ (x,y)\neq0: x=\pm\sqrt{-1}y\}$, each of cardinality $p-1$. This proves claim (1).

For $F=F_\cA=\frac1{12}xy$, we set
\beann
W_2
&:=&
\{ {\bx}\in V: F_\cA({\bx})=0\}
\\
&=&
\{ (0,y,z)\neq(0,0,0): y^2-z^2=0\} \sqcup
\{ (x,0,z)\neq(0,0,0): x^2-z^2=0\}
\eeann
Thus $W_2$ is the disjoint union of four  lines, $x=\pm z$, $y=\pm z$, proving claim (2).

For $F=F_\cC=\frac1{60}xyz$, we see immediately that
$$
W_3:=
\{ {\bx}\in V: F_\cC({\bx})=0\} = W_1\sqcup W_2,
$$
proving claim (3).
\end{proof}

From Lemma \ref{gis}, the computation leading to \eqref{locDens} is a classical exercise (see e.g. \cite{Landau1953}), with sieve dimensions
\be\label{gkis}
\gk=1, 4\text{ and $5$ for $F=F_\cH$, $F_\cA$ and $F_\cC$, respectively.}
\ee

Define $r_q:= |\Or^F(q)|T^{\gt+\vep}$. From the proof of Lemma \ref{gis}, we have $|\Or^F(p)|\ll p$, so
$$
\sum_{q<\cX^\tau \atop q \text{ squarefree}}
4^{\nu(q)}|r_q| \ll_\vep \cX^{2\tau+\vep} T^{\gt}.
$$
As $\cX\sim c T^{\gd}$, this error term is admissible, that is, satisfies \eqref{taudef}, for any
\be\label{tauis}
\tau < {\gd-\gt\over 2\gd}.
\ee
The elements $a_n(T)$ are zero for $n\gg T\gg\cX^{1/\gd}$, so \eqref{mudef} is satisfied for
\be\label{muis}
\mu > {2\over \gd-\gt}
.
\ee
We are not yet ready to apply Theorem \ref{sieve} since our sequence $\cA$ satisfies
$$
|\cA_q| = g(q) \cX + g(q) \cX_{q'} + r_q,
$$
the middle term of which is a nuisance. We define a new sequence $\cA'$ via
$$
S(\cA'\cW)=\sum_q \gw(q) \left( g(q) \cX  + r_q\right) = \sum_n a'_n\gw(n)
,
$$
and notice that
\bea\label{Sdiff}
| S(\cA\cW)-S(\cA'\cW)|
&=&
\left| \sum_q \gw(q)g(q) \cX_{q'}\right|
\ll \cX^{1-\eta}
\eea
by virtue of \eqref{badbnd}.

Now we may apply Theorem \ref{sieve} to $S(\cA'\cW)$.
 For any $R$ satisfying \eqref{Ris}, have
$$
S(\cA'\cW)\gg{\cX\over \log^{\gk} \cX},
$$
where $\gk$ is determined in \eqref{gkis}, according to the choice of $F\in\{F_{\cH},F_{\cA},F_{\cC}\}$.
Together with \eqref{Sdiff}, we now have that
$$
S(\cA\cW)\gg {\cX\over \log^\gk \cX},
$$
as desired. Having verified the sieve axioms, the upper bound of the same order of magnitude follows from a standard application of a combinatorial sieve, see e.g. \cite[Theorem 2.5]{Kontorovich2009} where the details are carried out. 

\subsection{Explicit values of $R$}\label{sec:valsR}
It remains to determine values of $R$ for which the above discussion holds.

The values of $\ga_\gk$ and $\gb_\gk$ in Theorem \ref{sieve} can be tabulated, see for instance Appendix III on p. 345 of \cite{DiamondHalberstamRichert1988}. The sets $\cA$ appearing in this paper  have  sieve dimensions $\gk=1, 4$ and $5$; for these values, we have
\be\label{abvals}
\ga_1=\gb_1=2, \ga_4=11.5317.., \gb_4=9.0722.., \ga_5=14.7735.., \gb_5=11.5347...
\ee
We will also need precise estimates on the functions which appear in \eqref{Ris}.
Although these are difficult to extract by hand, the following procedure is quite effective in practice. Usually, $u$ is chosen so that $\tau u$ is near $1$, and $v$ so that $\tau v$ exceeds $\ga_\gk$. Precisely, for any $\gz\in(0,\gb_\gk)$, set
$$
\tau u=1+\gz-\gz/\gb_\gk, \quad \tau v=\gb_\gk/\gz + \gb_\gk -1.
$$
Then by Halberstam-Richert \cite{HalberstamRichert1974}, equations (10.1.10), (10.2.4) and (10.2.7), we obtain
$$
{\gk \over f_\gk(\tau v)} \int_1^{v/u}F_\gk(\tau v-s)\left(1-\frac uv s\right){ds\over s}
\le (\gk+\gz)\log\frac{\gb_\gk}\gz - \gk + \gz\frac\gk{\gb_\gk}.
$$
Thus Theorem \ref{sieve} holds with
\be\label{Riz}
R> \mu (1+\gz-\gz/\gb_\gk) - 1 +
(\gk+\gz)\log\frac{\gb_\gk}\gz - \gk + \gz\frac\gk{\gb_\gk}=:m(\gz),
\ee
for any $0<\gz<\gb_\gk$. After inputting the values of $\mu$, $\gk$, $\ga_\gk$ and $\gb_\gk$, the minimum of $m(\gz)$ is easily determined by hand or with computer assistance.


\begin{table}
\begin{tabular}{|c|ccccc|c|}
	\hline
$F$ &  $(\G\cap N)\bk N$& $\gd$  & $\gt$  & $\mu$ &  $m(\gz)$ & $R$  \\
	\hline
$F_\cH$ & Any & $1$ & $5/6$ & $12$ & $13.93.. $ & $14 $\\
$F_\cH$ & Any & $\boxed{0.9992}$ & $5/6$ & $12.05$ & $13.99.. $ & $\boxed{14} $\\
$F_\cH$ & Any & $1$ & $39/64$ & $5.12$ & $6.48.. $ & $7 $\\
$F_\cH$ & Finite & $1$ & $1/2$ & $4$ & $5.22.. $ & $6 $\\
$F_\cH$ & Finite & $0.9265$ & $1/2$ & $4.69$ & $5.99.. $ & $6 $\\
$F_\cH$ & Infinite & $1$ & $1/2$ & $10$ & $11.8.. $ & $12 $\\
$F_\cH$ & Infinite & $0.991$ & $1/2$ & $10.2$ & $11.9.. $ & $12 $\\
	\hline
$F_\cA$ & Any & $1$ & $5/6$ & $12$ & $24.9.. $ & $25 $\\
$F_\cA$ & Any & $\boxed{0.99995}$ & $5/6$ & $12.0$ & $24.9.. $ & $\boxed{25} $\\
$F_\cA$ & Any & $1$ & $39/64$ & $5.12$ & $15.6.. $ & $16 $\\
$F_\cA$ & Finite & $1$ & $1/2$ & $4$ & $13.8.. $ & $14 $\\
$F_\cA$ & Finite & $0.98805$ & $1/2$ & $4.1$ & $13.9.. $ & $14 $\\
$F_\cA$ & Infinite & $1$ & $1/2$ & $10$ & $22.4.. $ & $23 $\\
$F_\cA$ & Infinite & $0.97895$ & $1/2$ & $10.4$ & $22.9.. $ & $23 $\\
	\hline
$F_\cC$ & Any & $1$ & $5/6$ & $12$ & $28.7.. $ & $29 $\\
$F_\cC$ & Any & $\boxed{0.99677 }$ & $5/6$ & $12.2$ & $28.99.. $ & $\boxed{29} $\\
$F_\cC$ & Any & $1$ & $39/64$ & $5.12$ & $18.7.. $ & $19 $\\
$F_\cC$ & Finite & $1$ & $1/2$ & $4$ & $16.7.. $ & $17$\\
$F_\cC$ & Finite & $0.981675$ & $1/2$ & $4.2$ & $16.99.. $ & $17 $\\
$F_\cC$ & Infinite & $1$ & $1/2$ & $10$ & $25.9.. $ & $26 $\\
$F_\cC$ & Infinite & $0.99905$ & $1/2$ & $10.02$ & $25.9.. $ & $26 $\\
	\hline
\end{tabular}
\vskip.1in \caption{Values of $R$ depending on $\gd$, $\gt$, and whether $N\cap \G$ is assumed to be a lattice in $N$.}
\label{FigTab}
\end{table}

For the function $F_{\cH}=z$, we have $\gk=1$, and $\ga_1=2=\gb_1$. The best value of $R$ is obtained for $\gd\to1$, where we can take $\gt=5/6$. Then \eqref{muis} gives $\mu>12$, and we have collected everything required to compute the minimum of $m(\gz)$ defined in \eqref{Riz}. We find that the minimum value is attained at $\gz=0.1203..$ with $m(\gz)=13.931..$.  Thus $R=14$ is the limit of our method. For $\gd>1-1/1250$ and $\gt=5/6$, we find the minimum value $m(0.1198..)=13.992$, which still allows $R=14$. For comparison, consider instead a finite co-volume congruence subgroup; then $\gd=1$ and can take $\gt=39/64$ by \cite{KimSarnak2003}. This gives $m(0.238..)=6.48..$, allowing $R=7$. If one could take $\gt$ arbitrarily close to $1/2$, the above calculation gives $m(0.292..)=5.216..,$ or $R=6$.\\

Table \ref{FigTab} summarizes the discussion above and extends it to the other choices $F_{\cA}=\frac1{12}xy$  and $F_{\cC}=\frac1{60}xyz$, with various possibilities for $\gd$, $\gt$, and whether $\G\cap N$ is a lattice in $N$. For comparison, we also show the spectral gap $\gt=39/64$ \cite{KimSarnak2003} for congruence subgroups of $\SL(2,\Z)$.  These values of $R$ are precisely those quoted in Theorems \ref{numbs}, \ref{numbs1}, and \ref{numbs2}, in particular proving Theorem \ref{thm:intro}.
\\

\

\

\appendix

\section{Proof of Theorem \ref{thm:hors}}\label{horoints}

Recall the notation \eqref{eq:NAdef}.
Let $\G<G$ be a discrete, finitely generated subgroup with critical exponent $\gd>1/2$.
Let $\phi\in L^{2}(\G\bk\bH)$ be an eigenfunction of the hyperbolic Laplace-Beltrami operator $\gD$ with eigenvalue $\gl=s(1-s)<1/4$ and $s>1/2$.
\\

The aim of this section is to reproduce the proof of Theorem \ref{thm:hors}, which was demonstrated in \cite{MyThesis}. We give the statement again. 
\\

\begin{theorem}[\cite{MyThesis}]\label{eigHor}
Assume that the volume of $\G\bk\bH$ is infinite, and that  the  horocycle $(N\cap\G)\bk N$ is closed and
infinite. 
There exist $x_{0}>0$ and $y_{0}<1$ such that if $|x|>x_{0}$ and $y<y_{0}$, then
$$
\phi
(n_{x}a_{y})
\ll
\left(
{y\over
x^{2}+y^{2}
}
\right)
^{s}
,
$$
as $|x|\to\infty$ and $y\to0$.
\\
\end{theorem}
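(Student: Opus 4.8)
The plan is to exploit the fact that, since $(N\cap\G)\bk N$ is closed and infinite, $\infty$ is \emph{not} a parabolic fixed point, and by the finite generation of $\G$ together with the discussion of Dal'bo cited in \S\ref{sec:eq} we have $\infty\notin\Lambda(\G)$. Hence there is a fundamental neighborhood $U$ of $\infty$ in $\bH$, of the shape $\{z : y>Y_0\}$ for some $Y_0$, that injects into $\G\bk\bH$; geometrically this is an honest cusp-like region even though $\G$ has infinite covolume there. The target bound $\phi(n_xa_y)\ll\big(y/(x^2+y^2)\big)^{s}$ is invariant under $z\mapsto -1/z$ (which sends $x+iy\mapsto (-x+iy)/(x^2+y^2)$ and so replaces $y/(x^2+y^2)$ by the imaginary part $\Im(-1/z)$), so the natural move is to apply an element $w$ of $\SL_2(\R)$ sending a point near $0$ (the region $|x|$ large, $y$ small) to the region near $\infty$. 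The point is that $\infty\notin\Lambda(\G)$ forces a whole disk around $0$ on $\partial\bH$ to be disjoint from $\Lambda(\G)$, hence there is a corresponding horoball at $0$ (bounded by a Euclidean circle tangent to $\R$ at $0$) that also embeds in $\G\bk\bH$; for $|x|$ large and $y$ small, $n_xa_y(i)$ sits inside such a horoball. So first I would fix this embedded horoball $B$ at $0$, and note that $\phi$ restricted to $B$ is just a Laplace eigenfunction on a genuine neighborhood with no identifications.

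The second step is to get the decay of $\phi$ on this embedded horoball $B$. Push $B$ to a standard horoball at $\infty$ via $w=\begin{pmatrix}0&-1\\1&0\end{pmatrix}$ (or a conjugate), so $\tilde\phi(z):=\phi(wz)$ is an eigenfunction with eigenvalue $s(1-s)$ on a region $\{y>Y_1\}$, but now with \emph{no} periodicity in $x$. Expand $\tilde\phi$ in the $y$-variable: writing the Laplace equation $-y^2(\partial_{xx}+\partial_{yy})\tilde\phi = s(1-s)\tilde\phi$ and using that $\tilde\phi\in L^2$ on $\{y>Y_1\}$ with respect to $dx\,dy/y^2$, the only way to be square-integrable is to have the solution behave like $y^{1-s}$ (not $y^{s}$, which fails to be in $L^2$ near $\infty$ when $s>1/2$; here one must be a little careful because without periodicity one gets a continuous superposition $\int \hat\phi(t,y)e^{itx}\,dt$ with Bessel-type $y$-dependence $\sqrt{y}K_{s-1/2}(|t|y)$ for $t\neq 0$ and the $y^{1-s}$ mode for $t=0$). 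The $t\neq 0$ Bessel modes decay super-polynomially in $y$, so they are harmless; the upshot is $\tilde\phi(z)\ll y^{1-s}$ uniformly on $\{y>Y_1\}$ — equivalently $\tilde\phi(z)\ll (\Im z)^{1-s}$. Actually the cleaner route, avoiding the Fourier analysis, is to bound $\tilde\phi$ on $\{y>Y_1\}$ directly: $|\tilde\phi|$ is subharmonic-like relative to the eigenfunction equation and $L^2$-bounded, and one can use the mean value property of $\Delta$-eigenfunctions (spherical averaging against $f_s$) together with the embedded-ball hypothesis to get a pointwise bound by an $L^2$ norm over a fixed-size hyperbolic ball, which by invariance is $O(1)$, then improve to $y^{1-s}$-decay by iterating the mean value inequality up the cusp, exactly as one does for classical cusp forms.

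The third step is bookkeeping: translate the bound $\tilde\phi(z)\ll(\Im z)^{1-s}$ back through $w$. Since $\Im(wz)=\Im(-1/z)=y/(x^2+y^2)$ for $z=x+iy$, this reads $\phi(n_xa_y)=\tilde\phi(w^{-1}(x+iy)\cdot(\text{const}))\ll\big(y/(x^2+y^2)\big)^{1-s}$. This has exponent $1-s$, whereas the theorem claims exponent $s$; since $s>1/2$ we have $s>1-s$, so I have the \emph{weaker} bound and need to do better. The fix is to not use the trivial $L^2$ bound at the bottom of the cusp but the sharp decay of the base eigenfunction and a comparison argument: in the embedded horoball, $\phi$ is dominated by a constant multiple of the Patterson–Sullivan density evaluated there (for $\phi=\phi_0$ this is exact up to the analysis in \cite{MyThesis}, and in general one bounds $|\phi_j|$ by $\phi_0$ times an $L^\infty$-type constant using the embeddedness and the maximum principle / mean value inequality), and $\phi_0(n_xa_y)\asymp(y/(x^2+y^2))^{\delta}$ with $\delta=s_0\ge s_j=s$ only when $j=0$ — so for $j\ge 1$ one genuinely needs the direct ODE analysis giving the $y^{s_j}$ versus $y^{1-s_j}$ dichotomy and must argue that it is the $y^{s_j}$ branch which controls the behavior as one moves \emph{into} the boundary $0$ (i.e. as $\Im(wz)\to 0$, which is $y>Y_1$ small after the flip — wait: as $|x|\to\infty,y\to0$ we have $\Im(wz)\to 0$, so near the \emph{other} end). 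This is exactly the subtle point, and it is where I expect the real work: I would handle it by writing $\phi$ on the embedded strip $\{n_xa_y:\text{all }x,\;0<y<y_0\}$ (which injects since $\infty\notin\Lambda$, so the full horocycle at $\infty$ plus a transverse interval embeds) as a Fourier expansion in $x$, $\phi(n_xa_y)=\sum_{m} c_m(y)e(mx/\ell)$ if $N\cap\G$ were a lattice — but it is \emph{not}, the horocycle is infinite, so instead one has a Fourier \emph{transform}, $\phi(n_xa_y)=\int \hat c(t,y)e(tx)\,dt$, and square-integrability over the embedded region $\{(x,y):x\in\R,\,0<y<y_0\}$ against $dx\,dy/y^2$ forces $\hat c(t,\cdot)$ to be the decaying Bessel solution $\sqrt{y}K_{s-1/2}(2\pi|t|y)$ for $t\neq0$ and, for the $t=0$ piece, forces the coefficient of $y^{1-s}$ to vanish (since $\int_0^{y_0}y^{2(1-s)}\,dy/y^2=\int_0^{y_0}y^{-2s}\,dy$ diverges for $s>1/2$) leaving only the $y^{s}$ mode. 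Assembling these: the $t=0$ term contributes $\ll y^{s}$, and the integral over $t\neq0$, after the change of variables back through $w$, is shown to be $\ll (y/(x^2+y^2))^{s}$ by the rapid decay of $K$. That Fourier-transform-plus-$L^2$-integrability dichotomy — isolating the $y^s$ branch as the only admissible one near the boundary point — is the crux and the main obstacle; everything else is change of variables and estimates on Bessel functions.
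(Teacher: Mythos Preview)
Your proposal has a genuine gap, and it stems from a misreading of the geometry of the end at $\infty$. Since $\infty\notin\Lambda(\G)$ and $N\cap\G$ is trivial, the corresponding end of $\G\bk\bH$ is a \emph{funnel} (an infinite-area hyperbolic half-cylinder), not ``an honest cusp-like region.'' In particular, your key claim that the strip $\{(x,y):x\in\R,\ 0<y<y_0\}$ injects into $\G\bk\bH$ is false: the limit set $\Lambda(\G)$ is a nonempty compact subset of $\R$, and near those boundary points the strip carries many $\G$-identifications. Once that embedding fails, the horocyclic Fourier-transform decomposition is unavailable, the $L^2$-divergence argument forcing the $y^{1-s}$ coefficient to vanish has no domain on which to run, and the whole scheme collapses. (There is also a circularity: taking the Fourier transform in $x$ presupposes integrability of $\phi(\cdot,y)$ along the horocycle, which is essentially what you are trying to prove.) Your earlier attempt via the inversion $w$ already detected the problem --- it produced the exponent $1-s$ rather than $s$ --- and the ``fix'' does not repair it.

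The paper's proof exploits the correct geometry of the funnel. After conjugating so that the flare is $\{1<|z|<\gk,\ 0<\arg z<\ga\}$ with identification $z\sim\gk z$ coming from a \emph{hyperbolic} element of $\G$, one separates variables in polar coordinates: periodicity in $\log r$ gives a genuine Fourier \emph{series} whose $\theta$-factors are associated Legendre functions $P^{\mu}_{\nu}(\cos\theta)$. The $L^2$-bound over the flare then forces exponential decay of the Fourier coefficients, and Legendre asymptotics yield $\phi(r,\theta)\ll\theta^{s}$ as $\theta\to0$; a final conformal change of variables converts $\theta^{s}$ into $\big(y/(x^2+y^2)\big)^{s}$. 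The moral is that the periodicity you need for separation of variables comes from the hyperbolic element bounding the flare, not from a (nonexistent) parabolic element.
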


The proof of this fact  
is reminiscent of
the arguments given in Patterson \cite{Patterson1975} and Lax-Phillips \cite{LaxPhillips1982} showing that a square-integrable eigenfunction of the Laplacian acting on an infinite volume surface must have eigenvalue $\gl<1/4$, i.e. the spectrum above $1/4$ is purely continuous. The key ingredient is that being $L^2$ forces an asymptotic formula for the rate of decay of the eigenfunction as it approaches the free boundary in the flare. 

\subsection{Fourier Expansion in the Flare}

 Recall that a ``flare'' in the fundamental domain is a region bounded by two geodesics
  , containing a free boundary. Concretely, after conjugation in $\SL_2(\R)$, we can assume that our group $\G$ contains the fixed hyperbolic cyclic subgroup generated by the element $\mattwo{\sqrt{k}}{0}{0}{1/\sqrt{k}}:z\mapsto kz$. So a flare domain is isometric to a domain of the form $\{z:1<|z|<k;0<\arg z<\ga\}$, where $\ga<\pi/2$. See Fig. \ref{FD}.

\begin{figure}
\includegraphics[width=2in]{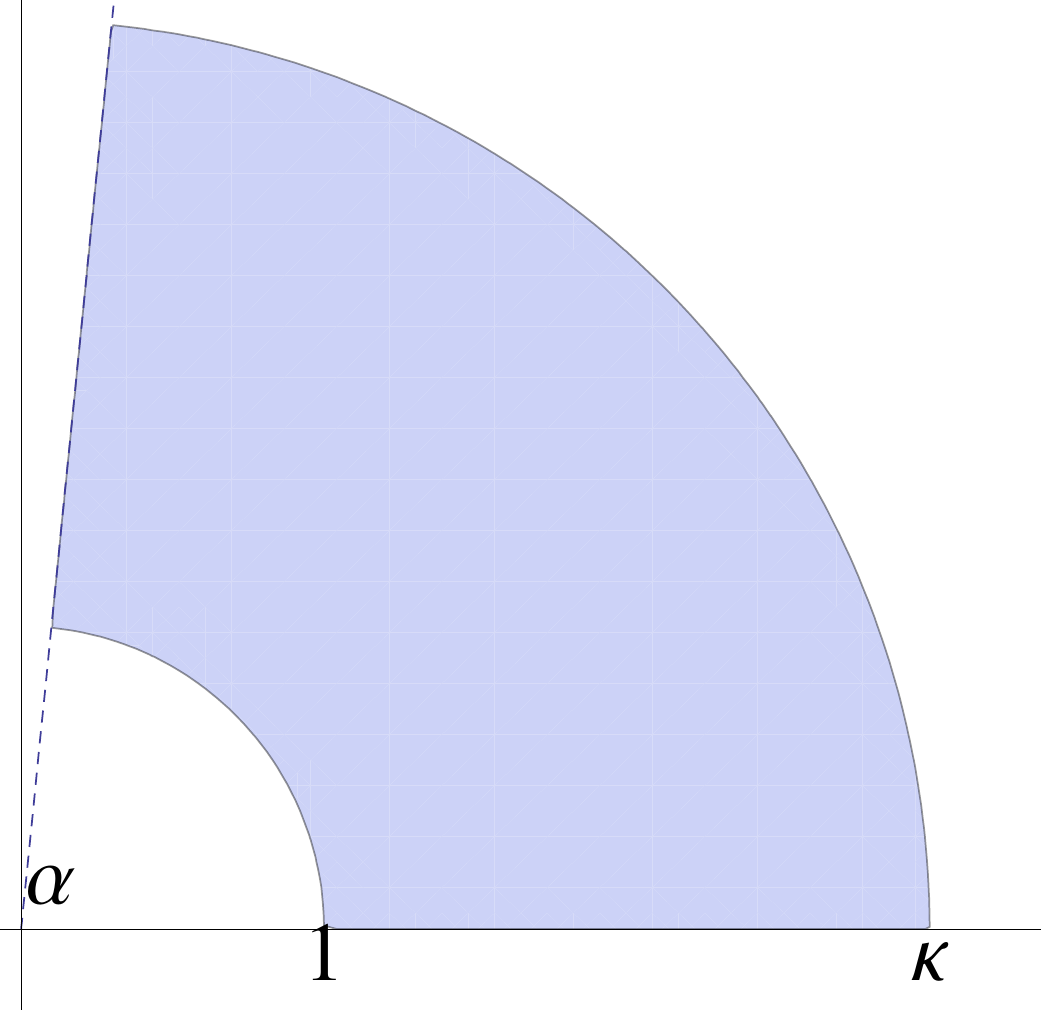}
\caption{A flare domain.}
\label{FD}
\end{figure}

Such a conjugation sends the point at infinity to some point $\xi\in[1,\gk]$, and a horocycle at infinity to a circle $h$ tangent to $\xi$.
See Fig. \ref{FCD}.

Let $\phi\in L^2(\GbkH)$, $||\phi||_2=1$, with $\gD\phi=s(1-s)\phi$. We proceed with the Fourier development of $\phi$ using polar coordinates in this domain.
As we are in the flare,  $\phi(\gk z)=\phi(z)$, $\gk>1$.
Write $z=re^{i\gt}$ in polar coordinates and separate variables:
\benn
\phi(z)=\phi(r,\gt)=f(r)g(\gt),
\eenn
with $f(\gk r)=f(r)$. Expand $f$ in a (logarithmic) Fourier series:
\benn
\phi(r,\gt)=\sum_{n\in\Z} g_n(\gt) e^{2\pi i n\log r/\log \gk}.
\eenn
Then the solution to the differential equation induced on $g_n$ is (see e.g.
\cite{Gamburd2002}, pages 180--181):
\benn
g_n(\gt)=c_n\sqrt{\sin \gt} P^{\mu}_{\nu}(\cos \gt),
\eenn
where $c_n\in\C$ are some
coefficients
and  $P^{\mu}_{\nu}$ is the associated Legendre function of the first kind with
\bea\label{munu}
\mu&=&{\frac 12}-s \\
\nonumber
\nu&=&-{\frac 12}+{2\pi i n\over \log \gk}.
\eea

We have proved
\begin{prop}
There are some coefficients $c_{n}\in\C$ such that
\be\label{f}
\phi(r,\gt)=\sum_n c_n \ e^{2\pi i n\log r/\log \gk} \sqrt{\sin \gt} P^{\mu}_{\nu}(\cos \gt)
\ee
in the flare, with $1\le r\le\gk$ and $0\le \gt \le \ga<\pi/2$.
\end{prop}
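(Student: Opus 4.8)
The plan is to prove this by separation of variables in the flare, the only input being the single non‑trivial relation $\phi(\gk z)=\phi(z)$. First I would pass to polar coordinates $z=re^{i\gt}$ and the logarithmic radial variable $u=\log r$, and record that in these coordinates the hyperbolic Laplacian factors as
\[
\gD=-y^2(\partial_{xx}+\partial_{yy})=-\sin^2\gt\,(\partial_{uu}+\partial_{\gt\gt}),
\]
using $y=r\sin\gt$, the polar expression $\partial_{xx}+\partial_{yy}=\partial_{rr}+r^{-1}\partial_r+r^{-2}\partial_{\gt\gt}$, and the change of variable $r\,\partial_r=\partial_u$. On the flare $\gt$ ranges over a compact subinterval of $(0,\pi/2)$, so the singular factor $\sin^2\gt$ causes no trouble there.

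Second, since $\phi$ is invariant under $z\mapsto\gk z$, the function $u\mapsto\phi(e^u,\gt)$ is periodic of period $\log\gk$; because $\phi$ is real‑analytic its Fourier series $\phi(r,\gt)=\sum_n g_n(\gt)\,e^{2\pi i n\log r/\log\gk}$ converges uniformly and may be differentiated termwise on the relevant compact range of $\gt$, with smooth coefficients $g_n$. Substituting this series into $\gD\phi=s(1-s)\phi$ and noting that $\partial_{uu}$ multiplies the $n$‑th exponential by $-\gl_n^2$ with $\gl_n:=2\pi n/\log\gk$, the eigenvalue equation separates into the family of ordinary differential equations
\[
g_n''(\gt)+\Bigl(\frac{s(1-s)}{\sin^2\gt}-\gl_n^2\Bigr)g_n(\gt)=0,\qquad 0<\gt<\ga.
\]
Third, I would strip off the half‑integral‑weight singularity by the substitution $g_n(\gt)=\sqrt{\sin\gt}\;h_n(\gt)$ and then set $x=\cos\gt$. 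A direct computation — in which $\cos^2\gt/\sin^2\gt=\sin^{-2}\gt-1$ is used to shift the weight — turns the equation for $h_n$ into the associated Legendre equation $(1-x^2)h''-2xh'+\bigl(\nu(\nu+1)-\mu^2/(1-x^2)\bigr)h=0$; matching the coefficient of $\sin^{-2}\gt$ forces $\mu^2=\tfrac14-s(1-s)=(s-\tfrac12)^2$ and matching the remaining constant forces $(\nu+\tfrac12)^2=-\gl_n^2$, so the branches $\mu=\tfrac12-s$, $\nu=-\tfrac12+2\pi i n/\log\gk$ give exactly \eqref{munu}. Hence each $h_n$ lies in the two‑dimensional solution space of this Legendre equation, one distinguished element of which is the associated Legendre function $P^\mu_\nu(\cos\gt)$.

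The one step that is not mechanical — and which I expect to be the crux — is showing that $g_n$ is a \emph{scalar multiple of $\sqrt{\sin\gt}\,P^\mu_\nu(\cos\gt)$ alone}, i.e. that the second, linearly independent Legendre solution is absent. Near the free boundary $\gt\to 0^+$ (a piece of the ideal boundary $\partial_\infty\bH$) the two solutions behave like $g_n\sim(\sin\gt)^{s}$ and $g_n\sim(\sin\gt)^{1-s}$ respectively; since $s>\tfrac12$, the second is not square‑integrable against the hyperbolic volume element $\sin^{-2}\gt\,d\gt\,du$, whereas the first is, and $\phi\in L^2(\G\bk\bH)$ therefore kills the $(\sin\gt)^{1-s}$ component. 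This is precisely the $L^2$‑exclusion argument of \cite{MyThesis} (cf. \cite{Gamburd2002}, pp.~180--181), and it is entirely parallel to the argument used to force $d_0=0$ in the proof of Theorem \ref{ppoo}. Summing the surviving modes back together then yields \eqref{f}.
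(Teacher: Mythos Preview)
Your proof is correct and follows essentially the same route as the paper: separation of variables in polar coordinates, Fourier expansion in the logarithmic radial variable, and identification of the angular ODE with the associated Legendre equation. The paper is terser --- it simply cites \cite{Gamburd2002} for the form of $g_n$ --- whereas you spell out both the reduction to Legendre and the $L^2$-exclusion of the second solution near $\gt\to0^+$. One small slip: your remark that ``on the flare $\gt$ ranges over a compact subinterval of $(0,\pi/2)$'' is not right --- the flare domain is $0<\gt\le\ga$, and it is precisely the approach $\gt\to0^+$ to the free boundary that makes your $L^2$-exclusion argument work; you use this correctly later, so this is only a presentational inconsistency.
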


Next we need bounds on the coefficients $c_n$.

\subsection{Bounds on the Fourier Coefficients}

\begin{figure}
\includegraphics[width=2in]{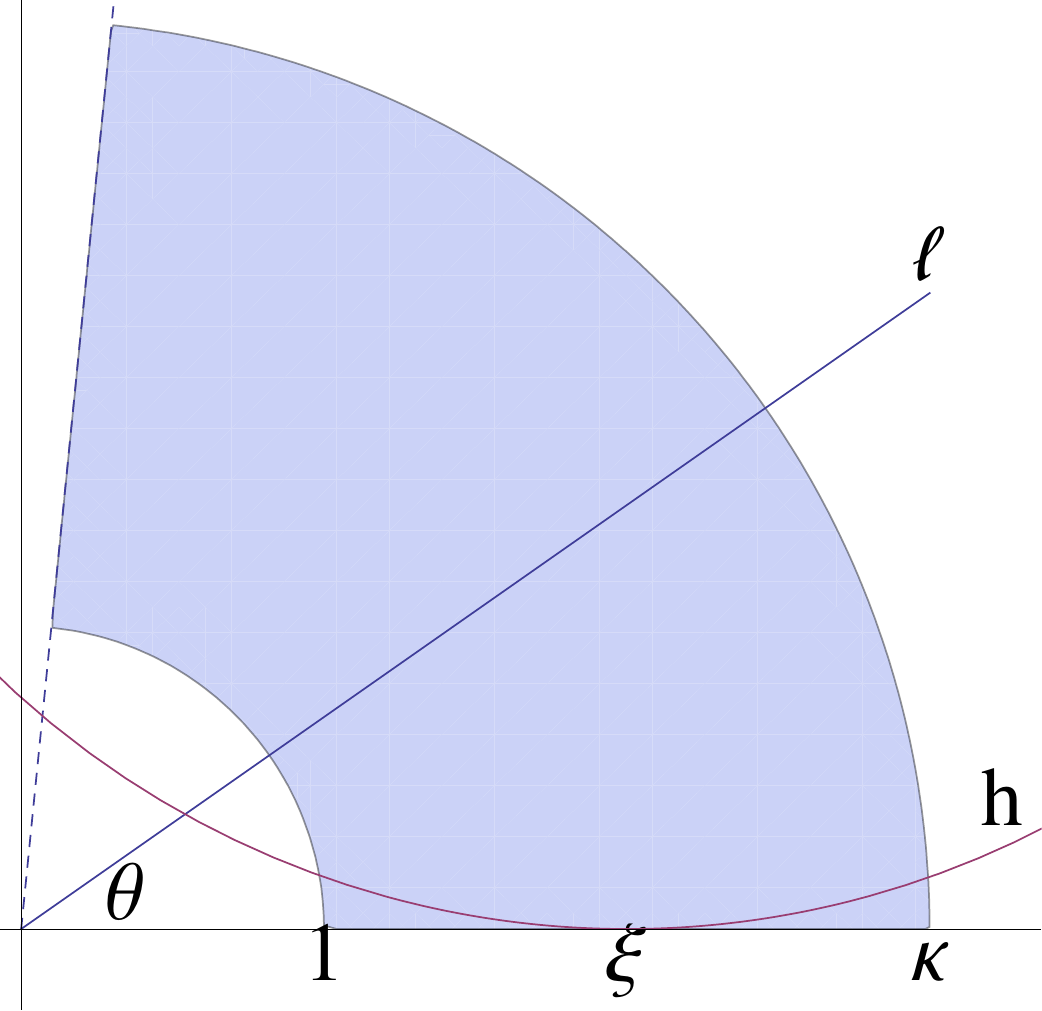}
\caption{The horocycle $h$ is based at the point $\xi$ which is in the free boundary. The line $\ell$ intersects the real line at angle $\gt$.}
\label{FCD}
\end{figure}

\begin{prop}\label{coeffBnd}
The coefficients $c_n$ in \eqref{f} satisfy
\benn
c_n\ll n^{s} e^{-\pi n\ga/\log\gk},
\eenn
as $n\to\infty$. The implied constant depends on $\ga$ and $\gk$.
\end{prop}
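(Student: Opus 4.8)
The plan is to read off the decay of $c_n$ from the normalization $\|\phi\|_2=1$ together with the large‑parameter behaviour of the conical Legendre function appearing in \eqref{f}. Write $\tau=\tau_n:=2\pi n/\log\gk$, so that by \eqref{munu} the degree is $\nu=-\tfrac12+i\tau$, and put $g_n(\gt):=c_n\sqrt{\sin\gt}\,P^\mu_\nu(\cos\gt)$, so that \eqref{f} is the Fourier expansion $\phi(re^{i\gt})=\sum_n g_n(\gt)\,e^{2\pi i n\log r/\log\gk}$ in the flare. First I would apply Parseval in the variable $\log r\in[0,\log\gk]$ to obtain, for each $\gt\in(0,\ga)$,
$$\int_1^\gk|\phi(re^{i\gt})|^2\,\tfrac{dr}{r}=\log\gk\sum_n|g_n(\gt)|^2\ \ge\ \log\gk\,|g_n(\gt)|^2 .$$
In polar coordinates the hyperbolic area element is $\tfrac{dr\,d\gt}{r\sin^2\gt}$, and the flare is part of a fundamental domain for $\G$, hence injects into $\G\bk\bH$. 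So, fixing a compact subinterval $[a,b]\subset(0,\ga)$ with $b>\ga/2$ and integrating the last bound over $\gt\in[a,b]$ against $\tfrac{d\gt}{\sin^2\gt}$, one gets $\log\gk\int_a^b|g_n(\gt)|^2\,\tfrac{d\gt}{\sin^2\gt}\le\|\phi\|_2^2=1$, that is,
$$|c_n|^2\int_a^b\frac{|P^\mu_\nu(\cos\gt)|^2}{\sin\gt}\,d\gt\ \le\ \frac1{\log\gk} .$$

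The heart of the matter is then an exponentially growing lower bound for the left‑hand integral. For this I would invoke the Mehler--Dirichlet representation, valid since $\mu=\tfrac12-s<\tfrac12$, which with $\nu+\tfrac12=i\tau$ reads
$$P^\mu_\nu(\cos\gt)=\sqrt{\tfrac2\pi}\,\frac{(\sin\gt)^\mu}{\Gamma(\tfrac12-\mu)}\int_0^\gt\frac{\cosh(\tau t)}{(\cos t-\cos\gt)^{\,\mu+1/2}}\,dt ,$$
where $\mu+\tfrac12=1-s\in(0,\tfrac12)$, so the integrand is integrable at $t=\gt$. As $\tau\to\infty$ the integral concentrates near $t=\gt$, where $\cos t-\cos\gt\sim\sin\gt\,(\gt-t)$, and a standard Laplace‑method estimate gives, uniformly for $\gt\in[a,b]$,
$$P^\mu_\nu(\cos\gt)\ \sim\ \frac{1}{\sqrt{2\pi}}\,\frac{e^{\tau\gt}}{\tau^{s}\sqrt{\sin\gt}} .$$
Consequently $\int_a^b|P^\mu_\nu(\cos\gt)|^2(\sin\gt)^{-1}\,d\gt\gg\tau^{-2s}\int_a^b e^{2\tau\gt}(\sin\gt)^{-2}\,d\gt\gg\tau^{-(2s+1)}e^{2\tau b}$.

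Feeding this lower bound into the inequality of the first paragraph gives $|c_n|^2\ll\tau^{2s+1}e^{-2\tau b}$, i.e. $|c_n|\ll\tau^{s+1/2}e^{-\tau b}$. Since $\tau=2\pi n/\log\gk$ and $b>\ga/2$, the polynomial factor $\tau^{1/2}$ is absorbed by the exponential surplus over the target rate $e^{-\tau\ga/2}=e^{-\pi n\ga/\log\gk}$, and one concludes $c_n\ll n^{s}e^{-\pi n\ga/\log\gk}$ as $n\to\infty$, with the implied constant depending only on $\gk$ and (through the asymptotic constant and the choice of $[a,b]$) on $\ga$.

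I expect the only genuine difficulty to be the asymptotic of the Mehler--Dirichlet integral: one must check that the Laplace‑method expansion is uniform for $\gt$ in the fixed compact interval $[a,b]$ and, more importantly, that its leading term is a \emph{nonzero} constant multiple of $e^{\tau\gt}/(\tau^s\sqrt{\sin\gt})$. Equivalently, one must rule out the possibility that on $[a,b]$ the function $\gt\mapsto P^\mu_\nu(\cos\gt)$ is dominated by the exponentially decaying solution of the governing equation $g''+\big(\tfrac{s(1-s)}{\sin^2\gt}-\tau^2\big)g=0$; the Mehler--Dirichlet representation is precisely the tool that makes this transparent. The remaining ingredients — the Parseval identity, the change to polar coordinates, and the final arithmetic with $\tau$ and $n$ — are routine.
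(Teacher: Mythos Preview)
Your proposal is correct and follows essentially the same route as the paper: both extract from $\|\phi\|_2=1$ the inequality $|c_n|^2\int|P^\mu_\nu(\cos\gt)|^2\,\tfrac{d\gt}{\sin\gt}\ll1$ over a compact $\gt$-interval inside the flare, and then lower-bound the integral via the large-$|\nu|$ asymptotic $|P^\mu_\nu(\cos\gt)|\asymp \tau^{-s}(\sin\gt)^{-1/2}e^{\tau\gt}$. The only difference is the source of that asymptotic: the paper quotes the formula from Gradshteyn--Ryzhik (p.~1003~\#3) together with Stirling, whereas you derive the same leading term from the Mehler--Dirichlet representation via Laplace's method; your extra factor $n^{1/2}$ (from integrating $e^{2\tau\gt}$ rather than using the cruder pointwise bound at $\gt=\ga/2$) is, as you note, harmlessly absorbed by taking $b>\ga/2$.
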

\begin{proof}
In Cartesian coordinates $z=x+iy$, the Haar measure is $dz={dx dy\over y^2}$. In polar coordinates
this becomes
\benn
dz =
  { dr d\gt\over r \sin^2\gt}.
\eenn

Input the expansion \eqref{f} into
 $||\phi||=1$,
 and consider only the contribution from the flare domain.
This gives
\bea\nonumber
1&\ge&\int_1^{\gk}\int_0^{\ga} |\phi(r,\gt)|^2 {dr d\gt\over r\sin^2 \gt}
\\
\nonumber
& =& \log\gk \sum_n |c_n|^2 \int_0^{\ga}|P^{\mu}_{\nu}(\cos\gt)|^2{d\gt\over\sin\gt}.
\\
\label{l2}
& \ge & \log\gk \sum_n |c_n|^2 \int_{\ga/2}^{\ga}|P^{\mu}_{\nu}(\cos\gt)|^2{d\gt\over\sin\gt},
\eea
where we have decreased the range of integration by positivity.

By Stirling's formula and the values of $\mu$ and $\nu$ in \eqref{munu},
\be\label{gammas}
{\G(\nu+\mu+1)\over\G(\nu+{3\over2})}
\gg n^{-s},
\ee
 for $n\gg1$. The implied constants depend only on $\gk$.

Next we record the elementary bound
\be\label{coses}
\cos\left[ (\nu+{1\over2})\gt + (\mu-{1\over2}) {\pi\over2}\right]
\gg e^{2\pi n\gt/\log\gk}.
\ee

Finally we have the formula (see \cite{GradshteynRyzhik2007} p.$1003$ $\# 3$),
\be\label{gr}
P^{\mu}_{\nu}(\cos\gt) ={2\over\sqrt{\pi}} {\G(\nu+\mu+1)\over\G\left(\nu+{3\over2}\right)} { \cos\left[ (\nu+{1\over2})\gt + (\mu-{1\over2}) {\pi\over2}\right] \over \sqrt{2\sin\gt}} \left(1+O\left({1\over\nu}\right)\right),
\ee
valid whenever
\begin{enumerate}
\item[(P1)] $\mu\in\R$, $|\nu|\gg 1$,
\item[(P2)] $|\nu|\gg|\mu|$,
\item[(P3)] $|\arg\nu|<\pi$,
\item[(P4)] $0<\vep<\gt<\pi-\vep$, and
\item[(P5)] $|\nu|\gg{1\over\vep}$.
\end{enumerate}
The big-Oh constant is absolute, depending on the implied constants above.

The
conditions (P1) and (P2) are immediately satisfied
 from the values of $\mu$ and $\nu$ in \eqref{munu}.
 The argument of $\nu$ approaches ${\pi\over2}$ for $n$ large, so (P3) is easily satisfied.  We will use this formula for $\gt$ in a fixed range away from zero, $\gt\in[\ga/2,\ga]$. 
 Thus (P4) is satisfied, and (P5) is equivalent to (P1).

Since $\gt$ is bounded away from zero, so is the factor $\sin\gt$ in the denominator of \eqref{gr}. Putting together \eqref{gammas}, \eqref{coses}  and \eqref{gr} gives
%
%
\be\label{p2}
|P^{\mu}_{\nu}(\cos\gt)| \gg n^{-s}  e^{2\pi n\gt/\log\gk} \gg n^{-s}  e^{\pi n\ga/\log\gk} .
\ee


Returning  to \eqref{l2},  consider the contribution from  just the $N$th coefficient
and use \eqref{p2}
\beann
1&
\ge
&
\log\gk \sum_n |c_n|^2 \int_{\ga/2}^{\ga}|P^{\mu}_{\nu}(\cos\gt)|^2{d\gt\over\sin\gt}\\
&\ge & |c_N|^2 \int_{\ga/2}^{\ga}|P^{1/2-s}_{-1/2+2\pi i N/\log\gk}(\cos\gt)|^2 d\gt\\
&\gg &|c_N|^2 N^{-2s} e^{2\pi N\ga/\log\gk},
\eeann
as $N\to\infty$.

%

This completes the proof of Proposition \ref{coeffBnd}.
\end{proof}

\subsection{Radial Bounds for the Eigenfunction}

Next we get bounds on the eigenfunction $\phi$ as the angle $\gt$ decreases to zero.

\begin{prop}\label{thbound}
Let $\phi$ be
as above
with eigenvalue $\gl=s(1-s)$, $s>1/2$. Then
$$
\phi(r,\gt)\ll \gt^s,
$$
as $\gt\to0$.
\end{prop}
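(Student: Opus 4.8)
\textbf{Proof proposal for Proposition \ref{thbound}.}

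The plan is to feed the Fourier expansion \eqref{f} together with the coefficient bounds of Proposition \ref{coeffBnd} into a direct estimate, after first isolating the correct archimedean behavior of the Legendre functions $P^{\mu}_{\nu}(\cos\gt)$ as $\gt\to 0$. First I would fix $r$ in a compact range, say $r=1$ for definiteness (the periodicity $\phi(\gk z)=\phi(z)$ makes the choice harmless up to a bounded factor), so that
$$
\phi(1,\gt)=\sqrt{\sin\gt}\sum_{n\in\Z} c_n\, P^{\mu}_{\nu_n}(\cos\gt),
$$
with $\mu={\frac 12}-s$ and $\nu_n=-{\frac 12}+2\pi i n/\log\gk$. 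The key analytic input is the small-argument asymptotics of the associated Legendre function: as $\gt\to 0$ one has $\sqrt{\sin\gt}\,P^{\mu}_{\nu}(\cos\gt)\asymp \gt^{\,{\frac 12}-\mu}=\gt^{\,s}$ for the dominant term, uniformly in $\nu$ on the relevant vertical line, with a subdominant $\gt^{\,{\frac 12}+\mu}=\gt^{1-s}$ contribution. (This is the same pair of exponents appearing throughout the paper, since $s(1-s)$ is the indicial equation at the flare boundary.) One must be slightly careful because for large $|\nu_n|$ the transition between the oscillatory regime $\gt\gg 1/|\nu_n|$ and the algebraic regime $\gt\ll 1/|\nu_n|$ matters; I would split the sum at $|n|\sim 1/\gt$ accordingly.

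For the low-frequency terms $|n|\lesssim 1/\gt$, I would use the uniform bound $|\sqrt{\sin\gt}\,P^{\mu}_{\nu_n}(\cos\gt)|\ll \gt^{s}\,(1+|n|\gt)^{C}$ for a fixed $C$, together with $|c_n|\ll |n|^{s}e^{-\pi|n|\ga/\log\gk}$ from Proposition \ref{coeffBnd}; the exponential decay in $|n|$ dominates any polynomial growth, so this part of the sum is $\ll \gt^{s}$. For the high-frequency terms $|n|\gtrsim 1/\gt$, one is deep in the algebraically-decaying regime of $P^{\mu}_{\nu_n}$, and the exponential decay of $c_n$ again gives a contribution which is $\ll \gt^{s}$ (indeed far smaller). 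Summing the two ranges yields $\phi(1,\gt)\ll \gt^{s}$, and then periodicity in $r$ promotes this to $\phi(r,\gt)\ll\gt^{s}$ for all $r\in[1,\gk]$, as claimed.

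The main obstacle I anticipate is making the Legendre asymptotics \emph{uniform} in the parameter $\nu_n$ as both $\nu_n\to i\infty$ and $\gt\to 0$; the clean formula \eqref{gr} used in Proposition \ref{coeffBnd} requires $\gt$ bounded away from $0$ (condition (P4)), so it cannot be reused here. I would instead invoke a uniform asymptotic expansion valid in the range $0<\gt\le \ga$, $|\nu_n|$ large (of Olver/Watson type, in terms of Bessel functions $J_{\mu}(|\nu_n|\gt)$ with an explicit error), or alternatively exploit the integral representation of $P^{\mu}_{\nu}$ directly to extract the $\gt^{s}$ behavior with a controlled dependence on $\nu_n$. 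Once that uniform control is in hand, the rest is the elementary splitting-and-summing argument sketched above. A secondary point to be careful about is the interchange of the sum over $n$ with limits in $\gt$, which is justified by the rapid (exponential) decay of the $c_n$'s making the series absolutely and locally uniformly convergent on $0<\gt\le\ga$.
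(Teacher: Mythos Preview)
Your overall strategy matches the paper's: split the Fourier series at $|n|\asymp 1/\gt$ and bound each piece separately, relying on the exponential decay of the $c_n$. Two points where the paper's execution is simpler than what you anticipate. First, your main worry---that \eqref{gr} cannot be reused because (P4) forbids $\gt\to 0$---is misplaced: conditions (P4) and (P5) together only require $|\nu|\gg 1/\vep$ with $\vep<\gt$, which is precisely the high-frequency regime $n\gg 1/\gt$. The paper therefore \emph{does} reuse \eqref{gr} there, obtaining $|P^{\mu}_{\nu}(\cos\gt)|\ll n^{-s}e^{2\pi n\gt/\log\gk}\gt^{-1/2}$. Second, for the low-frequency range $n\ll 1/\gt$, no Olver/Bessel machinery is needed: the hypergeometric representation $P^{\mu}_{\nu}(z)=\G(1-\mu)^{-1}((1+z)/(1-z))^{\mu/2}F(-\nu,\nu+1;1-\mu;(1-z)/2)$ gives $P^{\mu}_{\nu}(\cos\gt)\ll \gt^{-\mu}=\gt^{s-1/2}$ directly, uniformly for $|n|\ll 1/\gt$, since the hypergeometric factor stays bounded when $|\nu|^{2}\cdot\gt^{2}\ll 1$. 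One correction to your description: in the high-frequency range the Legendre function is not ``algebraically decaying'' but growing like $e^{2\pi n\gt/\log\gk}$; the tail converges only because $c_n\ll n^{s}e^{-\pi n\ga/\log\gk}$ and $2\gt<\ga$ for small $\gt$, giving net decay $e^{-\pi n(\ga-2\gt)/\log\gk}$ and hence a contribution $\ll e^{-c/\gt}$, far smaller than $\gt^{s}$. With these adjustments your argument is exactly the paper's.
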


\begin{proof}
For $\gt$ small,
$$
\sin\gt \asymp \gt ,
$$
and
$$
1-\cos\gt
\asymp
\gt^2.
$$

We require some more estimates. First we use the following standard bound on the Gauss hypergeometric series:
\benn
F(a,b,c;x)=1+O(\left|{abx\over c}\right|),
\eenn
valid for
\benn
|x|\max_{\ell\in\Z}\left|{(a+\ell)(b+\ell)\over(c+\ell)(1+|\ell|)}\right|\le{\frac 12}.
\eenn
In particular, with
\beann
a&=&-\nu={\frac 12}-{2\pi\i n\over\log\gk},\\
b &=&1+\nu=\bar{a},\\
c&=&1-\mu={\frac 12}+s, \text{ and}\\
x&=&{1-\cos\gt\over2}\ll\gt^2,
\eeann
the above gives:
\be\label{fbound}
F(-\nu,1+\nu,1-\mu;{1-\cos\gt\over2})\ll1,
\ee
whenever
\benn
n\ll{1\over\gt}.
\eenn

We require
 \cite{GradshteynRyzhik2007} p. 999 formula 8.702:
\beann
P_{\nu}^{\mu}(z) &=& {1\over\G(1-\mu)}\left(1+z \over 1-z\right)^{\mu/2} F(-\nu,\nu+1;1-\mu;{1-z\over2}).
\eeann
For $z=\cos\gt$, we have
\benn
\left(1+z \over 1-z\right)^{\mu/2}\asymp \gt^{-\mu}=\gt^{s-1/2},
\eenn
so together with \eqref{fbound} we arrive at
\be\label{pbound}
P^{\mu}_{\nu}(\cos\gt)\ll \gt^{-\mu}=\gt^{s-1/2}
\ee
for $n\ll{1\over\gt}$.

The analysis leading to \eqref{gr} also gives
\be\label{p}
|P^{\mu}_{\nu}(\cos\gt)| \ll  n^{-s}  e^{2\pi n\gt/\log\gk}  \gt^{-1/2}
\ee
in the range $n\gg{1\over\gt}$.

Thus we split the Fourier series as follows:
\beann
\phi(r,\gt)&=&\sum_n c_n e^{2\pi\i n\log r/\log \gk} \sqrt{\sin \gt} P^{\mu}_{\nu}(\cos \gt)\\
&=&\sum_{n\le X} + \sum_{n>X} \\
&=& S_1 + S_2,
\eeann
with $X\asymp {1\over\gt}$.

On $S_1$ we use the bound \eqref{pbound}:
\beann
|S_1|&\le&\sum_{n\le X} |c_n|  \sqrt{\sin \gt} |P^{\mu}_{\nu}(\cos \gt)|\\
&\ll& \gt^{1/2} \gt^{-\mu} \sum_n |c_n|\\
&\ll& \gt^{s},
\eeann
by the exponential decay of $c_n$ (clearly the series converges).
\\

On $S_2$, we use the bound \eqref{p}:\\
\beann
|S_2|&\le& \sum_{n>X} |c_n|  \sqrt{\sin \gt} |P^{\mu}_{\nu}(\cos \gt)|.\\
&\ll& \gt^{1/2} \sum_{n>X} n^s e^{-\pi n\ga/\log\gk} n^{-s}  e^{2\pi n\gt/\log\gk}  \gt^{-1/2}\\
&\ll&  \sum_{n>X}  e^{-\pi n(\ga-2\gt)/\log\gk} \\
&\ll&  \exp(-\pi X(\ga-2\gt)/\log\gk) \\
&\ll&  \exp(-{1\over \gt}\pi \ga/\log\gk),\\
\eeann
since $X\asymp{1\over\gt}$.

Combining
the exponential decay of $S_2$
with
the polynomial decay of $S_1$, we
arrive at
$$
\phi(r,\gt)\ll \gt^s,
$$
as $\gt\to0$.

This completes the proof of Proposition \ref{thbound}.
\end{proof}

\subsection{Proof of Theorem \ref{eigHor}}

Finally, we return to cartesian coordinates to convert the bound above into Theorem \ref{eigHor}. We require the following geometric analysis.
Recall Fig. \ref{FCD}, 
where $\xi$ is a point on the free boundary of the fundamental domain,
 $h$ is a horocycle tangent to $\xi$, and
 $\gt$ is the angle between the real line and the line $\ell$ from zero to infinity intersecting the horocycle, $h$ at a point $C$.

To return to cartesian coordinates
, we redraw our picture after the conformal mapping
\benn
z\to{z+\xi\over -z+\xi},
\eenn
which sends the triple $(0,\infty,\xi)\mapsto(1,-1,\infty)$. See Fig. \ref{MAD}.

\begin{figure}
\includegraphics{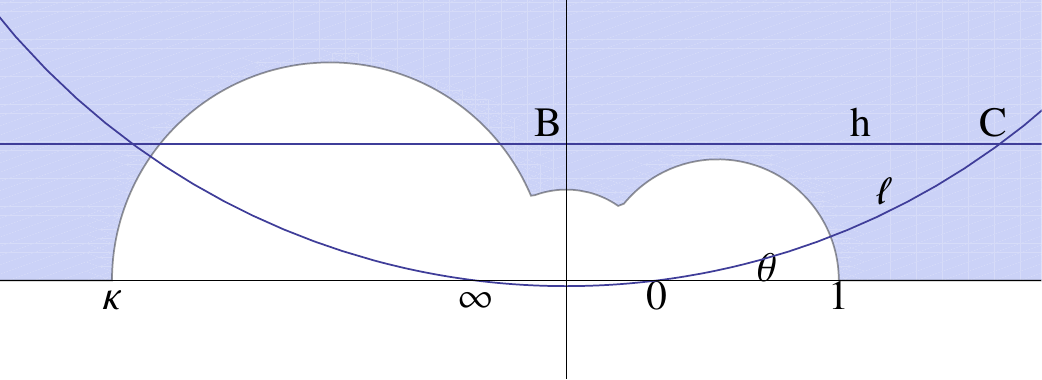}
\caption{The image of Fig. \ref{FCD} after a conformal transformation sending $\xi\mapsto\infty$, $0\mapsto1$ , and $\infty\mapsto-1$.
The point $C$ is a point of intersection of $h$ and $\ell$,
and $B$ is the intersection of $h$ with the $y$-axis.}
\label{MAD}
\end{figure}

Lines and circles are mapped to lines and circles, and angles of incidence are preserved. The horocycle tangent to $\xi$ is now the horizontal line, $h$ (tangent to $\xi$, which has been mapped to infinity). Similarly, the line $\ell$ from zero to infinity passing through the horocycle is now a circle passing through the same points, having the same angle of incidence, $\gt$, with the real line.

We reconstruct this configuration yet again in Fig. \ref{xfig}. Let $A$ be the center of the circle $\ell$, having radius $R=R(\gt)$, let $B$ be the intersection of the horocycle $h$ with the $y$-axis, $C$ the intersection of the horocycle with the circle, and let $D$ denote the origin.

It is easy to see through elementary geometry (since $\overline{A0}$ is tangent to the circle) that angle $0AD=\gt$. Looking at triangle $0AD$, we see that
\be\label{R}
R\sin\gt=\overline{0D}=1
.
\ee

\begin{figure}
\includegraphics[width=2in]{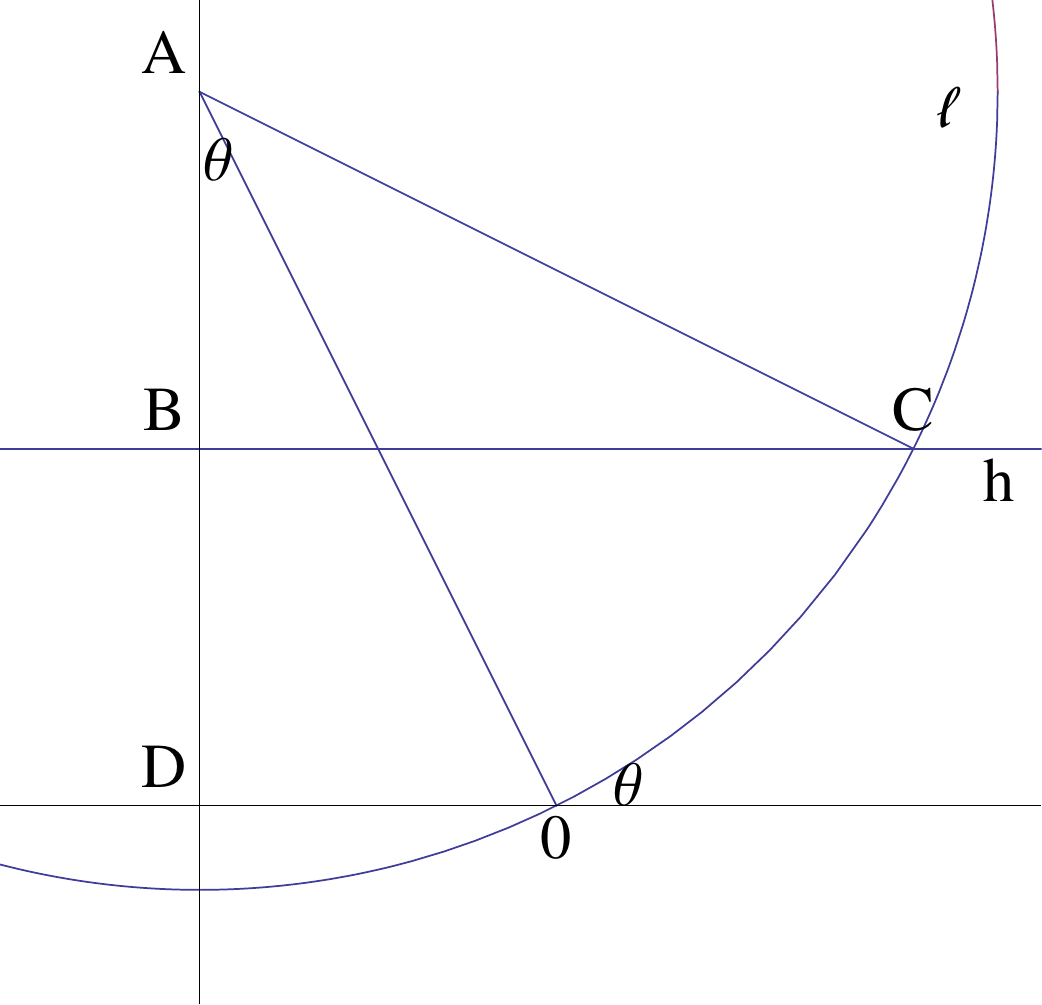}
\caption{A cleaner version of Fig. \ref{MAD}. The point $A$ is the center of the circle $\ell$, and $D$ denotes the origin.}
\label{xfig}
\end{figure}

Let $x=x(\gt)$ represent the length of $\overline{BC}$, and $y=y(\gt)$ be the distance from $B$ to $D$. We aim to compute the precise dependence of $x$ and $y$ on $\gt$. \\

We collect two identities for $\overline{AB}$:\\
\beann
\overline{AB} &=&\overline{AD}-\overline{BD}=R\cos\gt-y, \text{ and}\\
\overline{AB}^2 &=&\overline{AC}^2 - \overline{BC}^2=R^2-x^2.
\eeann

This implies
$$
x^2
=
R^2 - (R\cos\gt-y)^2 = R^2\sin^2\gt+2Ry\cos\gt-y^2
,
$$

\noindent
which together with \eqref{R} gives\\
\be\label{xth}
{x^2+y^2-1 \over 2y}={\cos\gt\over \sin\gt}\asymp {1\over \gt}
,
\ee
as $\gt\to0$.

Thus \eqref{xth}
, together with Proposition \ref{thbound},
%
%
 concludes the proof of Theorem \ref{eigHor}. 

 \bibliographystyle{alpha}

\bibliography{../../AKbibliog}

\end{document}